\documentclass[a4paper,reqno,12pt]{amsart}
\usepackage[dvips]{graphicx}
\usepackage{graphics}
\usepackage{amsthm, amssymb, amsmath, amsfonts}

\setcounter{MaxMatrixCols}{10}

\addtolength{\oddsidemargin}{-\oddsidemargin}
\setlength{\oddsidemargin}{-0.04cm}
\addtolength{\evensidemargin}{-\evensidemargin}
\setlength{\evensidemargin}{-0.04cm}
\addtolength{\textwidth}{-\textwidth}
\setlength{\textwidth}{16.5cm}
\addtolength{\marginparsep}{-\marginparsep}
\setlength{\marginparsep}{0cm}
\addtolength{\marginparwidth}{-\marginparwidth}
\setlength{\marginparwidth}{0cm}
\addtolength{\topmargin}{-\topmargin}
\setlength{\topmargin}{-1.7cm}
\addtolength{\headheight}{-\headheight}
\setlength{\headheight}{0.0cm}
\addtolength{\headsep}{-\headsep}
\setlength{\headsep}{2cm}
\addtolength{\textheight}{-\textheight}
\setlength{\textheight}{24.5cm}
\addtolength{\footskip}{-\footskip}
\setlength{\footskip}{1.2cm}
\pagestyle{plain} \pagenumbering{arabic}
\newtheorem{thm}{Theorem}[section]
\newtheorem{lem}{Lemma}[section]
\newtheorem{prop}{Proposition}[section]
\newtheorem{cor}{Corollary}[section]
\newtheorem{defn}{Definition}[section]
\newtheorem{ex}{Example}[section]

\newtheorem{rmk}{Remark}[section]
\newcommand{\Real}{{\mathbb R}}
\newcommand{\Natural}{{\mathbb N}}
\newcommand{\ds}{\displaystyle}
\newcommand{\To}{\longrightarrow}
\def\1{\^{\i}}
\def\2{\u{a}}
\def\3{\c{s}}
\def\4{\^{a}}
\def\5{\c{t}}

\def\e{\epsilon}

\def\<{\langle}
\def\>{\rangle}
\DeclareMathOperator*\dom{dom}

\DeclareMathOperator*\cl{cl}

\DeclareMathOperator*\inte{int}

\begin{document}
\title{Generalized monotone operators on dense sets}
\author{ Szil\'ard L\' aszl\' o$^*$ \hspace{2cm} Adrian Viorel$^\dagger$}
\thanks{$^*$This work was supported by a grant of the Romanian Ministry of Education, CNCS - UEFISCDI, project number PN-II-RU-PD-2012-3 -0166.}
\thanks{$^\dagger$This research was supported by a grant of the Romanian
National Authority for Scientific Research CNCS - UEFISCDI, project number
PN-II-PCE-2011-3-0094.}
\address{S. L\' aszl\' o, Department of Mathematics, Technical University of Cluj Napoca,
Str. Memorandumului nr. 28, 400114 Cluj-Napoca, Romania.}
\email{szilard.laszlo@math.utcluj.ro}
\address{A. Viorel,Department of Mathematics, Technical University of Cluj Napoca,
Str. Memorandumului nr. 28, 400114 Cluj-Napoca, Romania.}
\email{adrian.viorel@math.utcluj.ro}

\begin{abstract}
In the present work we show that the local generalized monotonicity of a lower semicontinuous set-valued operator on some certain type of dense sets ensures the global
generalized monotonicity of that operator. We achieve this goal gradually by showing at first that the lower semicontinuous set-valued functions of one
real variable, which are locally generalized monotone on a dense subsets of their domain are globally generalized monotone. Then, these results are  extended to the case  of set-valued operators  on arbitrary Banach spaces. We close this work with a  section on the global generalized convexity of a real valued function, which is obtained out of its local counterpart on some dense sets.\newline

\emph{Keywords:}{ locally generalized monotone functions; generalized monotone operators; self segment-dense sets; generalized convex functions}

\emph{MSC:} 47H04; 47H05; 26B25; 26E25
\end{abstract}

\maketitle

\section{Introduction}

In this paper we provide sufficient conditions which ensure that a lower semicontinuous set-valued operator satisfying a generalized monotonicity property locally is actually globally generalized monotone. We are concerned on strictly quasimonotone, pseudomonotone as well as strictly pseudomonotone set-valued operators, and these results extend and improve the results obtained in the single-valued case in \cite{La} and \cite{KaPiLa}.

 We study generalized monotonicity properties on certain dense subsets that we call self segment-dense. By an example we show that this new concept differs from that of a segment-dense set introduced by Dinh The Luc \cite{DTL} in the context of densely quasimonotone, respectively densely pseudomonotone operators.

Two counterexamples circumscribe the area of our research. We show that the local (generalized) monotonicity of an upper semicontinuous set-valued operator does not imply its global counterpart (Example \ref{ex2}). Then we also provide an example of lower semicontinuous set-valued operator, which is locally quasimonotone on its domain but is not globally quasimonotone (Example \ref{ex1}). Hence, we deal with locally strictly quasimonotone, locally pseudomonotone, respectively locally strictly pseudomonotone lower semicontinuous set-valued operators.

The outline of the paper is the following. In the next section  we introduce the necessary apparatus that we need to obtain our results. We also introduce the notion of a self segment-dense set and we show by an example that this notion differs to the notion of segment-dense set introduced in \cite{DTL}. Further, we emphasize that the local generalized monotonicity on a self segment-dense set contains as particular cases the local monotonicity conditions considered in \cite{KaPiLa}, \cite{KaPiLa1} and \cite{La}.
Section 3 contains the main results. Our analysis starts with the one dimensional case ($X=\mathbb{R}$) in which we show that a lower semicontinuous set-valued operator that is locally generalized monotone on a dense subset of its domain must be globally generalized monotone. These results are then extended to the case of arbitrary Banach spaces using the concept of a self segment-dense subset. In Section 4 the previous results are applied to obtain the global generalized convexity of locally generalized convex functions under mild assumptions. Also an example of a locally quasiconvex continuously differentiable function which is not globally quasiconvex is given.

\section{Preliminaries}

In what follows $X$ denotes a real Banach space and  $X^{\ast }$ denotes the topological dual of $X.$ For $x\in X$ and $x^{\ast }\in X^{\ast }$ we denote by $\<x^{\ast },x\>$
the scalar $x^{\ast }(x)$.

We will also often  use the following notations for the open, respectively
closed, line segments in $X$ with the endpoints $x$ and $y$
\begin{eqnarray*}
(x,y) &=&\big\{z\in X:z=x+t(y-x),\,t\in (0,1)\big\}, \\
\lbrack x,y] &=&\big\{z\in X:z=x+t(y-x),\,t\in \lbrack 0,1]\big\}.
\end{eqnarray*}%

For a non-empty set $D\subseteq X$, we denote by  $\inte(D)$ its interior and by $\cl(D)$ its closure. We say that $P\subseteq D$ is dense in $D$ if $D\subseteq \cl(P)$, and that $P\subseteq X$ is closed regarding $D$ if $\cl(P)\cap D=P\cap D.$

Let $T:X\rightrightarrows X^{\ast }$ be a set-valued operator. We denote by $D(T)=\{x\in X:T(x)\neq \emptyset \}$ its domain and by $R(T)=\displaystyle\bigcup_{x\in D(T)}{T(x)}$ its range. The graph of the operator $T$ is the set $G(T)=\{(x,x^{\ast })\in X\times X^{\ast }:\,x^{\ast }\in T(x)\}.$

Recall that $T$ is said to be \emph{upper semicontinuous} at $x\in D(T)$ if
for every open set $N\subseteq X^{\ast }$ containing $T(x)$, there exists a
neighborhood $M\subseteq X$ of $x$ such that $T(M)\subseteq N.$ $T$ is said
to be \emph{lower semicontinuous} at $x\in D(T)$ if for every open set $%
N\subseteq X^{\ast }$ satisfying $T(x)\cap N\neq \emptyset $, there exists a
neighborhood $M\subseteq X$ of $x$ such that for every $y\in M\cap D(T)$ one
has $T(y)\cap N\neq \emptyset .$ $T$ is upper semicontinuous (lower
semicontinuous) on $D(T)$ if it is upper semicontinuous (lower
semicontinuous) at every $x\in D(T).$

It can  easily  be observed that the definition of lower semicontinuity is equivalent to the following (see \cite{AF} Def. 1.4.2).

\begin{rmk}\rm\label{r21} $T$ is lower semicontinuous at $x\in D(T)$, if and only if, for
every sequence $({x_n})\subseteq D(T)$ such that $x_n\longrightarrow x$ and
for every $x^*\in T(x)$ there exists a sequence of elements $x^*_n \in T(x_n)
$ such that $x^*_{n}\longrightarrow x^*.$
\end{rmk}

Obviously, when $T$ is single-valued then upper semicontinuity and also lower semicontinuity become the usual notion of continuity.


\begin{defn}\label{d3.1} Let  $T:X\longrightarrow X^*$ be a single valued operator. We say that $T$ is hemicontinuous at $x\in X$, if for all $(t_n)_{n\in\Natural}\subset \Real,\,t_n{\longrightarrow} 0,\, {n\longrightarrow\infty}$ and $y\in X,$ we have $T(x+t_n y)\rightharpoonup^* T(x),\, n\longrightarrow \infty,$ where $"\rightharpoonup^*"$ denotes the convergence with respect to the $weak^*$ topology of $X^*$.
\end{defn}
\subsection{Local generalized monotonicities}
\noindent

In what follows we recall the definitions of several monotonicity concepts (see, for instance \cite{HKS,L-S,PQ}). The operator $T:X\rightrightarrows X^*$ is called:

\begin{enumerate}
\item[(1)] \emph{\ monotone}  if for all $(x,x^*),(y,y^*)\in G(T)$ one has the following
\begin{equation*}
\left\langle x^*-y^*,x-y\right\> \geq0;
\end{equation*}

\item[(2)] \emph{\ pseudomonotone} if for all $(x,x^*),(y,y^*)\in G(T),$ the following implication holds
\begin{equation*}
\<x^*,y-x\> \ge 0\Longrightarrow  \<y^*,y-x\>\ge 0,
\end{equation*}
or equivalently, for all $(x,x^*),(y,y^*)\in G(T),$ one has
\begin{equation*}
\<x^*,y-x\> > 0\Longrightarrow  \<y^*,y-x\>>0;
\end{equation*}

\item[(3)] \emph{strictly pseudomonotone} if for all $(x,x^*),(y,y^*)\in G(T),\, x\neq y,$ the following implication holds
\begin{equation*}
\<x^*,y-x\> \ge 0\Longrightarrow  \<y^*,y-x\>> 0;
\end{equation*}

\item[(4)] \emph{quasimonotone}  if for every $(x,x^*),(y,y^*)\in G(T)$ the following implication holds
\begin{equation*}
\<x^*,y-x\> > 0 \Longrightarrow  \<y^*,y-x\>\ge 0;
\end{equation*}

\item[(5)] \emph{strictly quasimonotone},  if $T$ is
quasimonotone, and for all $x,y\in D(T),\,x\neq y$ there exist $z\in(x,y)$
and $z^*\in T(z)$ such that$\<z^*,y-x\> \neq 0.$
\end{enumerate}

The relation among these concepts is shown bellow.
\begin{center}
\begin{tabular}{cc}
  $$ & $T \mbox{ is monotone} $\\
  $$ & $\Downarrow $\\
  $ T \mbox{ is strictly pseudomonotone}\Longrightarrow$ & $T \mbox{ is pseudomonotone}$\\
  $\Downarrow$ & $\Downarrow$\\
  $T \mbox{  is strictly quasimonotone}\Longrightarrow$ & $T \mbox{ is quasimonotone}.$\\
\end{tabular}
\end{center}

The concepts of local monotonicity can be defined as follows.
\begin{defn}
 We say that the operator $T$ is locally monotone, (respectively, locally pseudomonotone, locally strict pseudomonotone, locally quasimonotone, locally strict quasimonotone), on its domain $D(T)$, if every $x\in D(T)$ admits   an open neighborhood $U_x\subseteq X$ such that the restriction of the operator $T$ on $U_x\cap D(T),\, T\big|_{U_x\cap D(T)}$ is  monotone, (respectively, pseudomonotone, strictly pseudomonotone, quasimonotone, strictly quasimonotone).
\end{defn}

According to \cite{KaPiLa1}, respectively \cite{La}, local monotonicity, respectively local generalized monotonicity of a real valued function of one real variable in general  implies its global counterpart. More precisely the following hold.

\begin{prop}\label{p21} Let $I\subseteq\mathbb{R}$ be an open interval and let $f:I\longrightarrow\mathbb{R}$ be a function.
\begin{itemize}
\item[(a)]{\rm (see Lemma 2.1, \cite{KaPiLa1})}  If $f$ is locally increasing on $I$, then $f$ is globally increasing on $I$.
\item[(b)]{\rm (see Theorem 2.2, \cite{La})} If $f$ is locally strictly quasimonotone on $I$, then $f$ is globally strictly quasimonotone on $I$.
\item[(c)]{\rm(see Theorem 2.3, \cite{La})}  If $f$ is locally pseudomonotone on $I$, then $f$ is globally pseudomonotone on $I$.
\item[(d)]{\rm(see Theorem 2.4, \cite{La})} If $f$ is locally strictly pseudomonotone on $I$, then $f$ is globally strictly pseudomonotone on $I$.
\end{itemize}
\end{prop}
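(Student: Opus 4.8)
The plan is to read each of the four assertions as a ``local $\Rightarrow$ global'' statement on the connected set $I$ and, in every case, to argue by contradiction: assuming the global property fails, one extracts two points $a<b$ in $I$ witnessing the failure and then contradicts the local hypothesis. The first simplification is to translate everything into sign conditions on $f$. Since $X=\mathbb{R}$ and $\langle f(x),y-x\rangle=f(x)(y-x)$, for a pair $a<b$ one has $\langle f(a),b-a\rangle>0\iff f(a)>0$ and $\langle f(b),a-b\rangle>0\iff f(b)<0$, and similarly with non-strict inequalities. Thus global monotonicity reads ``$f$ is nondecreasing'', global quasimonotonicity reads ``there is no pair $a<b$ with $f(a)>0$ and $f(b)<0$'', global pseudomonotonicity reads ``$a<b$, $f(a)\ge 0\Rightarrow f(b)\ge 0$ (and $f(a)>0\Rightarrow f(b)>0$)'', and the strict versions are the evident sharpenings.

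For parts (a), (c) and (d) I would use a compactness-and-chaining argument. Given a witnessing pair $a<b$, cover the compact interval $[a,b]$ by the neighborhoods $U_x$ on which $f$ is locally (monotone/pseudomonotone/strictly pseudomonotone), extract a finite subcover, and via its Lebesgue number choose a partition $a=t_0<t_1<\dots<t_n=b$ so fine that every $[t_{i-1},t_i]$ lies in a single $U_x$; hence the local property holds on each $[t_{i-1},t_i]$. The decisive feature of these three notions is that they propagate a sign from the left endpoint of a subinterval to all points to its right: for pseudomonotonicity, $f(t_{i-1})\ge 0$ forces $f\ge 0$ on $(t_{i-1},t_i]$, so in particular $f(t_i)\ge 0$, which feeds the next step (with strict pseudomonotonicity one gets $f>0$, and for monotonicity one simply chains $f(t_{i-1})\le f(t_i)$). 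An induction along the partition therefore transports the sign of $f(a)$ all the way to $f(b)$ and contradicts the choice of the witnessing pair.

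For part (b) the same chaining fails, because quasimonotonicity is not additive under concatenation: a forbidden ``$+$ then $-$'' transition may straddle an interior partition point, where the two adjacent local conditions say nothing. This is the main obstacle, and it is precisely what local \emph{strict} quasimonotonicity is designed to overcome. Here I would localize the transition directly. Assuming a witnessing pair $a<b$ with $f(a)>0$ and $f(b)<0$, set $c=\sup\{x\in[a,b]:f(x)>0\}\in[a,b]$. By definition of the supremum there are points $p\le c$ arbitrarily close to $c$ with $f(p)>0$, while $f\le 0$ on $(c,b]$; and on any subinterval immediately to the right of $c$ the strict-quasimonotone ``nonzero witness'' clause produces a point $q>c$ with $f(q)<0$ (the degenerate case $c=b$ is immediate, as then $f(b)<0$ itself serves as the negative value). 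Choosing $p<c<q$ inside one neighborhood $U_c$ of local strict quasimonotonicity yields $p<q$ with $f(p)>0$ and $f(q)<0$, contradicting local quasimonotonicity on $U_c$. The remaining clause of global strict quasimonotonicity, namely a nonzero value of $f$ in every open subinterval, follows separately, since strict quasimonotonicity forbids $f\equiv 0$ on any neighborhood.

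Throughout, the delicate point is that no continuity of $f$ is assumed, so the supremum $c$ need not be attained and $f$ may jump. This is exactly why the argument for (b) must lean on the strict-quasimonotone witness condition rather than on any intermediate-value reasoning, and why (c) and (d) are handled by transporting weak or strict sign information through partition points instead of by controlling $f$ pointwise. I expect the quasimonotone case (b) to be the genuine difficulty; the monotone, pseudomonotone and strictly pseudomonotone cases should all reduce to the uniform chaining scheme above.
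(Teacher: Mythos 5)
Your proposal is correct, but there is nothing in the paper itself to compare it against: Proposition \ref{p21} is stated without proof, the four items being quoted from Lemma 2.1 of \cite{KaPiLa1} and Theorems 2.2--2.4 of \cite{La} (the paper only uses them later, e.g.\ in the proof of Proposition \ref{p31}, via single-valued selections). So your argument is, in effect, a self-contained replacement for the cited external proofs. Its two mechanisms both check out. For (a), (c), (d), the Lebesgue-number chaining works because on each cell $[t_{i-1},t_i]$ the local property applied to the pair $(t_{i-1},t_i)$ propagates exactly the right datum to the next cell: $f(t_{i-1})\le f(t_i)$ in the monotone case, $f(t_{i-1})\ge 0\Rightarrow f(t_i)\ge 0$ together with $f(t_{i-1})>0\Rightarrow f(t_i)>0$ in the pseudomonotone case (the paper's Definition (2) records both equivalent forms, so both implications are available locally), and $f(t_{i-1})\ge 0\Rightarrow f(t_i)>0$ in the strict case; induction then contradicts the chosen witnessing pair. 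For (b), your supremum $c=\sup\{x\in[a,b]:f(x)>0\}$ is the right localization: the supremum supplies points $p\le c$ in $U_c$ with $f(p)>0$, and the strict-quasimonotonicity witness clause applied inside $U_c\cap(c,b]$ (where $f\le 0$ by definition of $c$) supplies $q>c$ with $f(q)<0$, so the pair $p<q$ violates quasimonotonicity of $f|_{U_c\cap I}$; the degenerate case $c=b$ and the separate verification of the nonvanishing clause are handled correctly. You are also right that no continuity of $f$ may be invoked --- the statement assumes none --- and your argument indeed never uses it; this is precisely why the supremum-based localization in (b), rather than any intermediate-value reasoning, is needed. One small point worth making explicit if you write this up: when you apply the witness clause of the \emph{restricted} map $f|_{U_c\cap I}$, the witness $z$ must lie in the domain of the restriction, so you should choose your auxiliary points $u<v$ with $(u,v)\subseteq U_c\cap I$ (possible since $U_c$ contains an interval around $c$); with that precision the argument is complete.
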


However, local quasimonotonicity does not imply global quasimonotonicity as the next example shows.

\begin{ex}\rm(Example 2.6, \cite{La})
Let $f:\Real\longrightarrow\Real,\,$
$$
f(x)=\left\{
\begin{array}{lll}
-x-1,\,\mbox{if\,}\, x<-1\\
0,\,\mbox{if\,}\, x\in[-1,1]\\
-x+1,\,\mbox{if\,}\,x>1.\\
\end{array}
\right.
$$
We can see that $f$ is continuous and locally quasimonotone on $\Real$ but $f$ is not globally quasimonotone.
\end{ex}

Next we extend  the statements $(a)-(d)$  in Proposition \ref{p21} to the case of set-valued operators. We need the following notion.

Let $Z$ and $Y$ be two arbitrary sets. A single valued selection of the set-valued map $F:Z\rightrightarrows Y$ is the single valued map $f:Z\longrightarrow Y$
satisfying $f(z)\in F(z)$ for all $z\in Z.$


\begin{prop}\label{p31} Let $I\subseteq {\mathbb{R}}$ be an interval and let $F:I\rightrightarrows {\mathbb{R}}$ be a set-valued function. Assume that $F$
is locally strictly quasimonotone, (respectively locally monotone, locally pseudomonotone, locally strictly pseudomonotone) on $I$.
Then $F$ is strictly quasimonotone, (respectively monotone,  pseudomonotone, strictly pseudomonotone) on $I$.
\end{prop}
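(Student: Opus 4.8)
**The plan is to reduce the set-valued statement to the single-valued Proposition \ref{p21} by passing to single-valued selections, handling each monotonicity type separately.**

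My strategy rests on the observation that all the monotonicity conditions in question are statements quantified over pairs $(x,x^*),(y,y^*)\in G(T)$, and each such condition can be tested by examining the behavior along the segment joining $x$ and $y$. For the one-dimensional case $F:I\rightrightarrows\mathbb{R}$, the segment $[x,y]$ is just a subinterval of $I$, so the natural idea is: given two graph points $(x,x^*)$ and $(y,y^*)$ witnessing a potential failure of global generalized monotonicity, I would construct an appropriate single-valued selection $f$ of $F$ defined on (a neighborhood of) $[x,y]$ that passes through these two points, i.e.\ $f(x)=x^*$ and $f(y)=y^*$. The point is that local generalized monotonicity of $F$ on $I$ forces $f$ to be locally generalized monotone along the interval, and then Proposition \ref{p21} upgrades this to global generalized monotonicity of $f$, which in turn contradicts the assumed failure at the pair $(x,y)$.

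I would carry this out type by type. \emph{Local monotonicity} is the cleanest: if $F$ is locally monotone, then near each point the selection values are locally increasing (in the sense of Proposition \ref{p21}(a)), so any selection interpolating $x^*$ at $x$ and $y^*$ at $y$ is locally increasing, hence globally increasing, giving $\<x^*-y^*,x-y\>=(x^*-y^*)(x-y)\geq0$. For \emph{strict quasimonotonicity}, \emph{pseudomonotonicity}, and \emph{strict pseudomonotonicity}, I would argue by contradiction: suppose the global property fails for some pair in $G(F)$; produce a selection realizing that pair; verify the selection is locally generalized monotone (of the same type) using the local hypothesis on $F$ together with the fact that a selection inherits the defining inequalities restricted to its graph; then apply the corresponding part (b), (c), or (d) of Proposition \ref{p21} to conclude the selection is globally generalized monotone, contradicting the chosen failing pair. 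The strictly quasimonotone case additionally requires checking the existence clause (some $z\in(x,y)$ with $z^*\in F(z)$ and $\<z^*,y-x\>\neq0$), which follows from the global strict quasimonotonicity of the selection once $D(F)$ is handled correctly.

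The \textbf{main obstacle} is the construction of a well-behaved single-valued selection: I must ensure that a selection can be chosen so that it is itself locally generalized monotone and that its domain-of-definition issues do not obstruct applying Proposition \ref{p21}, which is stated for functions on an \emph{open interval}. The domain $D(F)$ need not be an interval, and $F$ is only lower semicontinuous, not continuous, so selecting a single value at each point while preserving local monotonicity is delicate; I expect that lower semicontinuity (via Remark \ref{r21}) is exactly what lets me pick the selection values consistently along approaching sequences. A careful treatment will likely fix the two endpoints $x^*\in F(x)$, $y^*\in F(y)$ and then use the local monotonicity neighborhoods to extend the choice over the connecting segment without violating the pertinent inequality, restricting attention to the relevant subinterval of $\inte(I)$ where Proposition \ref{p21} applies.
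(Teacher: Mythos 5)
Your overall architecture --- pass a single-valued selection of $F$ through the two offending graph points and invoke Proposition \ref{p21} --- is exactly the paper's own proof, so the plan is sound in outline. But the two mechanisms you propose for your self-identified obstacles are the wrong ones, and one would genuinely fail. You plan to use lower semicontinuity (Remark \ref{r21}) to build the selection; Proposition \ref{p31} makes \emph{no} continuity assumption on $F$ whatsoever, so that tool is unavailable --- and it is also unnecessary. The correct observation is purely algebraic: monotonicity, quasimonotonicity, pseudomonotonicity and strict pseudomonotonicity are implications universally quantified over pairs of graph points, so \emph{every} selection $f$ of $F$ (in particular one with $f(x)=x^*$, $f(y)=y^*$) inherits them on any set where $F$ has them. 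The one clause that is \emph{not} inherited is the existential clause in strict quasimonotonicity: an arbitrary selection may pick the value $0$ precisely at those $z$ where $F(z)$ contains nonzero elements. Your plan is circular at this point --- you want to obtain that clause for $F$ from the \emph{global} strict quasimonotonicity of the selection, but to invoke Proposition \ref{p21}(b) you must first know the selection is \emph{locally} strictly quasimonotone, which is exactly what is not automatic. The repair is easy: choose $f(z)$ to be a nonzero element of $F(z)$ whenever one exists (consistent with the prescribed nonzero values $x^*,y^*$); alternatively, note, as the paper does in its opening line, that the existential clause for $F$ itself follows directly from the local strict quasimonotonicity of $F$.

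The second gap concerns endpoints: ``restricting attention to the relevant subinterval of $\inte(I)$'' does not resolve the mismatch with Proposition \ref{p21}, which is stated on \emph{open} intervals. If $I$ is closed on one side and the offending pair involves an endpoint $x$, no subinterval of $\inte(I)$ contains $x$, and the selection argument cannot be run for that pair. The paper devotes a separate argument to this case: $\langle x^*,y-x\rangle>0$ at a left endpoint forces $x^*>0$; strict quasimonotonicity of $F$ on a neighborhood of $x$ then yields an \emph{interior} point $u$ and $u^*\in F(u)$ with $u^*>0$; symmetrically near $y$ one finds an interior $v>u$ with $v^*\in F(v)$, $v^*<0$; and the pair $(u,u^*),(v,v^*)$ contradicts the strict quasimonotonicity already established on $\inte(I)$. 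Your proof needs this (or an equivalent) endpoint step for each of the four monotonicity types.
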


\begin{proof}
We treat the case when $F$ is strictly quasimonotone, the other cases can be treated similarly. Observe first that due to the local strict quasimonotonicity property of $F$, for every $x,y\in I,\,x\neq y,$ there exists $z\in (x,y)$ and $z^*\in F(z)$ such that $\<z^*,y-x\>\neq 0.$

Let $x,y\in \inte(I).$ Suppose the contrary, that is, $\<x^*,y-x\>>0$ for
some $x^*\in F(x)$ and $\<y^*,y-x\><0,$ for some $y^*\in F(y).$

Let $f$ be a single valued selection of $F$, which is also locally strictly quasimonotone,
and assume that $f(x)=x^*$ and $f(y)=y^*.$ According to Proposition \ref{p31} $(b)$ we get that $f$ is globally strictly quasimonotone on $I$, hence $\<f(y),y-x\>=\<y^*,y-x\>\ge 0,$ contradiction. Thus, $F$ is globally strictly quasimonotone
on $\inte(I).$

If $I$ is closed from left assume that $x$ is the left endpoint of $I$ and $%
\<x^*,y-x\>> 0$ for some $x^*\in F(x)$ and $y\in I.$ Then obviously $x^*>0.$
Assume further that there exists $y^*\in F(y)$ such that $\<y^*,y-x\><0,$
i.e. $y^*<0.$ Since $F$ is locally strictly quasimonotone on $I$, there exists an open
neighborhood $U_x$ of $x$, such that $F|_{U_x\cap I}$ is strictly quasimonotone, which in
particular shows that for some $u\in \inte(I)\cap U_x$ there exists $u^*\in
F(u),\, u^*>0.$ On the other hand, it can be deduced in the same manner,
that there exists an open neighborhood $U_y$ of $y$, such that $F|_{U_y\cap I}
$ is strictly quasimonotone, which in particular shows that for some $v\in \inte(I)\cap U_y$
there exists $v^*\in F(v),\, v^*<0.$ Obviously we can assume $u<v.$ But
then, $\<u^*,v-u\>>0$ and $\<v^*,v-u\><0$ which is in contradiction with the
fact that $F$ is strictly quasimonotone on $\inte(I).$

The case when $I$ is closed from right can be treated similarly and we omit it.
\end{proof}

\begin{rmk}\rm
\label{r31} Note that for the case when $F$ is locally monotone, locally pseudomonotone,  respectively locally strictly pseudomonotone, one may use Proposition \ref{p31} $(a),\,(c)$ and $(d)$ respectively.
\end{rmk}

The next example shows, that locally quasimonotone set-valued operators are not globally quasimonotone in general. Even more, our operator is also lower semicontinuous, hence searching for conditions that ensure global quasimonotonicity of a lower semicontinuous set-valued operator, based on its local quasimonotonicity, is meaningless.
\begin{ex}\rm
\label{ex1} Consider the operator
\begin{equation*}
F:\mathbb{R} \rightrightarrows\mathbb{R} ,\, F(x):=\left\{
\begin{array}{lll}
[0,-x-1],\mbox{ if } x<-1, &  &  \\
\, 0,\mbox{ if } x\in[-1,1], &  &  \\
\, [-x+1,0],\mbox{ if } x>1.
\end{array}
\right.
\end{equation*}
Then $F$ is lower semicontinuous and locally {quasimonotone} on ${\mathbb{R}}$,
but is not globally {quasimonotone}.

It is easy to check that $F$ is locally {quasimonotone} and lower semicontinuous on ${\mathbb{R}}.$ On the other hand, for $x=-2,\,y=2$ and $x^{\ast }=1\in F(x)$
we have $\<x^{\ast },y-x\>=4>0,$ but for $y^{\ast }=-1\in F(y)$ we have $\<y^{\ast },y-x\>=-4<0,$ which shows that $F$ is not {quasimonotone}.
\end{ex}

Next we define the local generalized monotonicity of an operator on a subset of its domain.

\begin{defn}
\label{d2.1} Let $T:X\rightrightarrows X^*$ be a set-valued operator and let $D\subseteq D(T).$ We say that $T$ is locally quasimonotone,
(respectively, locally monotone, locally pseudomonotone, locally strictly pseudomonotone, locally strictly quasimonotone), on $D$, if every $x\in D$ admits an open neighborhood $U_x$, such that the restriction $T|_{U_x\cap D}$ is {quasimonotone}, (respectively, {monotone}, {pseudomonotone}, {strictly pseudomonotone}, {strictly quasimonotone}).
\end{defn}

\begin{rmk}\label{r2.2}\rm
\textrm{ There are other ways to define local generalized monotonicity of an operator on a subset of its domain. Indeed, one may
assume that: }

\begin{itemize}
\item[(i)] \textrm{every $x\in D(T)$ admits an open neighborhood $U_x$, such
that the restriction $T|_{U_x\cap D}$ has the appropriate (generalized)
monotonicity property, 
respectively, }

\item[(ii)] \textrm{every $x\in D$ admits an open neighborhood $U_x$, such
that the restriction $T|_{U_x\cap D(T)}$ has the appropriate (generalized)
monotonicity property. }
\end{itemize}

\textrm{However, it can be observed that the condition given in Definition \ref{d2.1} is the weakest among these conditions. Note that if $D(T)$ and
also $D$ are open, then the above definitions coincide. }
\end{rmk}

\subsection{Some global generalized monotonicity results for single-valued  operators}
\noindent


Having Definition \ref{d2.1} in mind, an important question is related to which properties must the subset $D\subseteq D(T)$ have such that the local generalized monotonicity on $D$ implies the global monotonicity of the same kind. The following results concerning single-valued operators, established in \cite{KaPiLa1}, respectively \cite{La}, will serve as a starting point for our investigations in Section 3.

\begin{prop}\label{p}\emph{(Theorem 3.4, \cite{KaPiLa1})}   Let $X$ be a real Banach space and let $D\subseteq X$ be open and convex. Let $C\subseteq D$  be a set closed regarding $D,$ with empty interior, satisfying the following condition
$$ \forall\,x,y\in D\setminus C \mbox{ the set } [x,y]\cap C \mbox{ is countable, possibly empty.}$$
Assume that $T:D\longrightarrow X^*$ is a single-valued operator. If   $T$ is continuous and the restriction $T|_{D\setminus C}$ is locally monotone, then $T$ is monotone on $D$
\end{prop}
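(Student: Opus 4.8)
The plan is to reduce the statement to a one-dimensional problem along segments and then recover monotonicity on all of $D$ by a density argument. First I recall that $T$ is monotone on $D$ as soon as, for every $x,y\in D$, the scalar function $\phi_{x,y}:[0,1]\to\mathbb{R}$, $\phi_{x,y}(t)=\langle T(x+t(y-x)),y-x\rangle$, is nondecreasing; indeed, evaluating at $t=0$ and $t=1$ yields $\langle T(y)-T(x),y-x\rangle=\phi_{x,y}(1)-\phi_{x,y}(0)\ge 0$. Since $D$ is convex the whole segment $[x,y]$ lies in $D$, and since $T$ is continuous the function $\phi_{x,y}$ is continuous on $[0,1]$. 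Because $C$ is closed regarding $D$ and $\inte(C)=\emptyset$, the set $D\setminus C$ is relatively open in $D$ (so local monotonicity is well posed on it) and dense in $D$; hence it suffices to prove monotonicity first for pairs $x,y\in D\setminus C$ and then to pass to the limit.

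So I would fix $x,y\in D\setminus C$ and write $\phi=\phi_{x,y}$. By hypothesis the exceptional set $E=\{t\in[0,1]:x+t(y-x)\in C\}$ is countable. For each $t_0\in[0,1]\setminus E$ the point $z_0=x+t_0(y-x)$ lies in $D\setminus C$, so local monotonicity supplies an open neighborhood $U_{z_0}$ on which $T|_{U_{z_0}\cap(D\setminus C)}$ is monotone; translating this along the segment shows that for $s,r$ close to $t_0$ with $s,r\notin E$ one has $(\phi(r)-\phi(s))(r-s)\ge 0$. Thus $\phi$ is \emph{locally nondecreasing at every point outside the countable set $E$}. The crux is to upgrade this to global monotonicity of the continuous function $\phi$, and here I cannot simply split $[0,1]$ into subintervals avoiding $C$ and invoke Proposition \ref{p21}$(a)$, because $E$ may be countable yet dense. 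Instead I would run an intermediate-value argument: if $\phi(s)>\phi(t)$ for some $s<t$, pick a level $\lambda\in(\phi(t),\phi(s))$ with $\lambda\notin\phi(E)$ (possible since $\phi(E)$ is countable) and set $u=\sup\{r\in[s,t]:\phi(r)\ge\lambda\}$. Continuity forces $u<t$ and $\phi(u)=\lambda$, while $\lambda\notin\phi(E)$ forces $u\notin E$; local monotonicity at $u$ then gives $\phi(r)\ge\phi(u)=\lambda$ for all $r\notin E$ slightly larger than $u$ (such $r$ exist since $E$ is countable), contradicting $\phi(r)<\lambda$ for every $r>u$ that follows from the definition of $u$. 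Hence $\phi$ is nondecreasing, giving $\langle T(y)-T(x),y-x\rangle\ge 0$ for all $x,y\in D\setminus C$.

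Finally I would pass from $D\setminus C$ to $D$. Given arbitrary $x,y\in D$, density of $D\setminus C$ provides sequences $x_n\to x$ and $y_n\to y$ with $x_n,y_n\in D\setminus C$, whence $\langle T(x_n)-T(y_n),x_n-y_n\rangle\ge 0$ for every $n$. Since $T$ is continuous and the duality pairing is jointly continuous for the norm topologies, letting $n\to\infty$ yields $\langle T(x)-T(y),x-y\rangle\ge 0$, so $T$ is monotone on $D$. The main obstacle is clearly the middle step: the set $E$ on which local monotonicity is unavailable can be dense, so the naive reduction to Proposition \ref{p21}$(a)$ on intervals fails, and the supremum/intermediate-value argument is precisely the set-free strengthening of Proposition \ref{p21}$(a)$ that circumvents this, using only continuity of $\phi$ and countability of $E$.
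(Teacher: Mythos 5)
Your proof is correct, and the comparison here is necessarily indirect: the paper does not prove Proposition \ref{p} at all, but recalls it from Theorem 3.4 of \cite{KaPiLa1} as known background. Measured against the methodology the paper does use for its own set-valued results in Section 3, your argument follows the same two-step template (reduce to dimension one along a segment via $\phi_{x,y}(t)=\langle T(x+t(y-x)),y-x\rangle$, then recover arbitrary pairs $x,y\in D$ from the dense case by continuity of $T$ and joint continuity of the pairing), but your one-dimensional core is different and genuinely stronger than the lemma the paper leans on. Where the paper's proofs invoke Proposition \ref{p21}/Proposition \ref{p31}, which require local monotonicity at \emph{every} point of the interval, your level-set supremum argument --- choose $\lambda\in(\phi(t),\phi(s))\setminus\phi(E)$, set $u=\sup\{r\in[s,t]:\phi(r)\ge\lambda\}$, get $\phi(u)=\lambda$ and $u<t$ from continuity, $u\notin E$ from $\lambda\notin\phi(E)$, and a contradiction from points $r\in(u,\min(u+\delta,t))\setminus E$, which exist because $E$ is countable --- proves that a continuous $\phi$ which is pairwise locally nondecreasing off a countable set is nondecreasing; this is exactly the strengthening of Proposition \ref{p21}(a) that the situation demands, and each step checks out. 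Two remarks. First, your motivating worry that $E$ ``may be countable yet dense'' cannot actually occur under the stated hypotheses: since $C$ is closed regarding $D$ and $[x,y]\subset D$, the set $E$ is closed and countable, hence nowhere dense; but your argument never uses closedness, which is a virtue rather than a flaw. Second, precisely because you only use countability of $E$ together with density of $D\setminus C$ (which already follows from $\inte(C)=\emptyset$), your proof establishes a statement more general than Proposition \ref{p}, in the spirit of the first-category refinement in Proposition \ref{ppp} from \cite{KaPiLa} --- though note that your final limiting step genuinely uses norm continuity of $T$, so it does not subsume the hemicontinuous version stated there.
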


\begin{prop}\label{pp} Let $X$ be a real Banach space and let $D\subseteq X$ be open and convex. Let $C\subseteq D$  be a set closed regarding $D,$ with empty interior, satisfying the following condition
$$ \forall\,x,y\in D\setminus C \mbox{ the set } [x,y]\cap C \mbox{ is countable, possibly empty.}$$ Assume that $T:D\To X^*$ is a hemicontinuous operator with the property that $\<T(z),y-x\>\neq 0$ for all $z\in [x,y]\cap C,\,x,y\in D,\, x\neq y.$ Then the following hold.
\begin{itemize}
\item[(a)]\emph{(Theorem  3.6, \cite{La})} If $T|_{D\setminus C}$ is locally strictly quasimonotone, then $T$ is strictly quasimonotone on $D.$
\item[(b)]\emph{(Theorem  3.7, \cite{La})} If $T|_{D\setminus C}$ is locally pseudomonotone, then $T$ is pseudomonotone on $D.$
\item[(c)]\emph{(Theorem  3.8, \cite{La})} If $T|_{D\setminus C}$ is locally strictly pseudomonotone, then $T$ is strictly pseudomonotone on $D.$
\end{itemize}
\end{prop}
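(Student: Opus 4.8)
The plan is to reduce each of the three statements to the one-dimensional Proposition \ref{p21} by restricting $T$ to line segments. Fix $x,y\in D$ with $x\neq y$ and set $I=\{t\in\mathbb{R}:x+t(y-x)\in D\}$; since $D$ is open and convex, $I$ is an open interval containing $0$ and $1$. Define the scalar function
$$\varphi(t)=\<T(x+t(y-x)),y-x\>,\quad t\in I.$$
The hemicontinuity of $T$ guarantees that $\varphi$ is continuous: if $t_n\to t$, then $T(x+t_n(y-x))\rightharpoonup^* T(x+t(y-x))$, whence $\varphi(t_n)\to\varphi(t)$. Moreover, for $s,t\in I$ with $p=x+s(y-x)$, $q=x+t(y-x)$ one has $\varphi(s)(t-s)=\<T(p),q-p\>$, so the one-dimensional (strict) quasimonotonicity, respectively (strict) pseudomonotonicity, of $\varphi$ on a subinterval is exactly the corresponding property of $T$ along that portion of the segment. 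Hence it suffices to prove that $\varphi$ is locally strictly quasimonotone (respectively, locally pseudomonotone, locally strictly pseudomonotone) on $I$, and then invoke Proposition \ref{p21}(b) (respectively (c), (d)) to upgrade it to the global property on $I$, reading off the conclusion at $t=0,1$.

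To establish the local property at an arbitrary $t_0\in I$, write $z_0=x+t_0(y-x)$ and distinguish two cases. If $z_0\notin C$, I use that $C$ is closed regarding $D$, so that $D\setminus C=D\cap(X\setminus\cl C)$ is open; thus there is an open ball $W\subseteq D\setminus C$ around $z_0$, and shrinking it inside the neighborhood furnished by the local strict quasimonotonicity of $T|_{D\setminus C}$ we may assume $T|_W$ is strictly quasimonotone. Since $W$ is convex, $\{t:x+t(y-x)\in W\}$ is an open subinterval of $I$ containing $t_0$, and the identity above shows that $\varphi$ inherits the property there. If instead $z_0\in C$, the nonvanishing hypothesis applies: choosing $\bar x=z_0-\e(y-x)$ and $\bar y=z_0+\e(y-x)$ in $D$ for small $\e>0$ gives $z_0\in[\bar x,\bar y]\cap C$, so $\<T(z_0),\bar y-\bar x\>\neq 0$ and therefore $\varphi(t_0)=\<T(z_0),y-x\>\neq 0$. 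By continuity $\varphi$ has constant sign on a neighborhood of $t_0$, and a direct check shows that a continuous function of constant sign on an interval is strictly quasimonotone (respectively pseudomonotone, strictly pseudomonotone) there. In either case $\varphi$ is locally strictly quasimonotone at $t_0$.

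Combining the two cases, $\varphi$ is locally strictly quasimonotone on all of $I$, so Proposition \ref{p21}(b) yields that $\varphi$ is globally strictly quasimonotone on $I$. Evaluating at $t=0$ and $t=1$ recovers the quasimonotone implication for the pair $(x,y)$, while the strict clause produces some $r\in(0,1)$ with $\varphi(r)\neq 0$, i.e. $z=x+r(y-x)\in(x,y)$ with $\<T(z),y-x\>\neq 0$; as $x,y$ were arbitrary this is precisely strict quasimonotonicity of $T$ on $D$. Cases (b) and (c) proceed identically, with Proposition \ref{p21}(c),(d) in place of (b). I expect the only delicate point to be the treatment of the points where the segment meets $C$: there the local generalized monotonicity of $T|_{D\setminus C}$ gives no information, and it is exactly the hypothesis $\<T(z),y-x\>\neq 0$ on $[x,y]\cap C$ — rather than the countability of this intersection, which plays the analogous role in the monotone Proposition \ref{p} — that rescues the argument by forcing $\varphi$ to keep a constant sign near such points.
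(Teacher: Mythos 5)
Your proof is correct, but note that the paper itself gives no proof of Proposition \ref{pp}: it is recalled from \cite{La} (Theorems 3.6--3.8) as background material, so the only meaningful comparison is with the proofs the paper supplies for its own generalizations, Theorems \ref{t32}, \ref{t34} and \ref{t36}, together with their one-dimensional precursors Theorems \ref{t31}, \ref{t33} and \ref{t35}. Your strategy is the same in spirit --- parametrize the segment $[x,y]$, scalarize $T$ into a real function of one variable ($\varphi$ for you, $F_{x,y}$ in the paper), establish local generalized monotonicity of that scalarization, and finish with a one-dimensional local-to-global result --- but the technical route differs in an instructive way. The paper, working with set-valued lower semicontinuous operators on self segment-dense sets, can only obtain local monotonicity of $F_{x,y}$ on a dense parameter set $J\subseteq[0,1]$, and must then invoke the heavier set-valued one-dimensional theorems to handle $[0,1]\setminus J$; you instead exploit the fact that, in the single-valued setting, hemicontinuity makes $\varphi$ continuous along the segment, so the nonvanishing hypothesis at points of $C$ yields a locally constant sign there, and the local property is obtained at \emph{every} point of the open parameter interval, after which the elementary single-valued Proposition \ref{p21} suffices. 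Your use of the closed-regarding-$D$ hypothesis to conclude that $D\setminus C$ is open is exactly what makes the restriction-to-a-ball step legitimate, and it is genuinely needed: without it the trace of a neighborhood of $z_0$ on the segment need not be an interval on which $\varphi$ inherits the property. Finally, as you observe, your argument never uses the countability of $[x,y]\cap C$, nor in fact the empty-interior hypothesis on $C$, so you have proved a formally stronger statement than the one quoted from \cite{La}; this is consistent with the paper's own point that such hypotheses can be weakened, and is also less surprising than it may appear, since the blanket condition $\<T(z),y-x\>\neq 0$ for all $z\in[x,y]\cap C$ and all $x,y\in D,\ x\neq y$, is itself very strong (for interior points $z\in C$ it forbids $T(z)$ from having any kernel direction).
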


Recall that a set $V\subseteq X$ is of first category in the sense of Baire, if $V=\bigcup_{n=1}^{\infty }V_{n}$, where $V_{n}\subseteq X,\,n\in\Natural$ are nowhere dense sets i. e. ${\inte}\left( {\cl}(V_{n})\right) =\emptyset $.


  Note that condition $$ \forall\,x,y\in D\setminus C \mbox{ the set } [x,y]\cap C \mbox{ is countable, possibly empty}$$   imposed on the set $C$ in Proposition \ref{pp}, has been  weakened in \cite{KaPiLa}. More precisely, in \cite{KaPiLa} it  was established the following result.

\begin{prop}\label{ppp}\emph{(Theorem 3, \cite{KaPiLa})}  Let $X$ be a real Banach space and let $V\subseteq X$  be a set of first category with the following property
$$ \forall x,y\in D\setminus V,\mbox{ the set }[x,y]\cap V\mbox{ is countable, possibly empty.}$$
 Assume that  $T:D\To X^*$ is hemicontinuous and locally monotone on $D\setminus V$. Then $T$ is monotone on $D$.
\end{prop}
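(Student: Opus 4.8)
The plan is to pass to the one--dimensional sections of $T$ along segments and to absorb the weaker ``first category'' hypothesis by a Baire category (Kuratowski--Ulam) argument; throughout I take $D$ to be open and convex, as in Propositions \ref{p}--\ref{pp}. For $x,y\in D$ put
$$g_{x,y}(t)=\<T(x+t(y-x)),\,y-x\>,\qquad t\in[0,1].$$
Hemicontinuity makes $t\mapsto T(x+t(y-x))$ convergent in the $weak^*$ topology along the direction $y-x$, so $g_{x,y}$ is continuous. Since monotonicity of $T$ between the collinear points $x+s(y-x)$ and $x+t(y-x)$ is exactly $(s-t)\big(g_{x,y}(s)-g_{x,y}(t)\big)\ge 0$, the whole statement reduces to showing that $g_{x,y}$ is nondecreasing on $[0,1]$ for every $x,y\in D$.

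First I would prove that $T$ is monotone on the dense set $D\setminus V$. If $x,y\in D\setminus V$ then, by hypothesis, $A=\{t\in[0,1]:x+t(y-x)\in V\}$ is countable. For $t\notin A$ the point $x+t(y-x)$ lies in $D\setminus V$, so local monotonicity, followed by a passage to the limit using continuity, shows $g_{x,y}$ nondecreasing on a neighbourhood of $t$; thus $g_{x,y}$ is continuous and locally nondecreasing at every point of the co--countable set $[0,1]\setminus A$. Such a function is globally nondecreasing: were $g_{x,y}(a)>g_{x,y}(b)$ for some $a<b$, then for every $c\in\big(g_{x,y}(b),g_{x,y}(a)\big)$ the number $t_0(c)=\sup\{t\in[a,b]:g_{x,y}(t)\ge c\}$ would satisfy $g_{x,y}(t_0(c))=c$ with $g_{x,y}<c$ immediately to its right, so that $g_{x,y}$ fails to be locally nondecreasing at $t_0(c)$ and hence $t_0(c)\in A$; but $c\mapsto t_0(c)$ is injective, embedding an interval into the countable set $A$, a contradiction. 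This is the one--dimensional mechanism already underlying Propositions \ref{p21}, \ref{p} and \ref{pp}, and it yields monotonicity of $T$ on $D\setminus V$.

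It remains to pass from $D\setminus V$ to all of $D$, and here hemicontinuity --- which only controls $T$ along a single fixed direction --- forces a careful limiting procedure. For arbitrary $x,y\in D$ I would use \emph{parallel} perturbations: fix a direction $d\in X$ and consider the segments $[x+\e d,\,y+\e d]$, whose direction $(x+\e d)-(y+\e d)=x-y$ is independent of $\e$. If $\e$ is such that the open segment meets $V$ in a set with empty interior, the parameters for which the segment stays in $D\setminus V$ are dense, so by the monotonicity on $D\setminus V$ together with continuity the corresponding section is nondecreasing; evaluating it at the endpoints gives
$$\<T(x+\e d)-T(y+\e d),\,x-y\>\ge 0.$$
Letting $\e\to 0$ through such values and invoking hemicontinuity \emph{in the fixed direction} $d$, so that $T(x+\e d)\rightharpoonup^* T(x)$ and $T(y+\e d)\rightharpoonup^* T(y)$, we obtain $\<T(x)-T(y),x-y\>\ge 0$, as required.

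The main obstacle is to secure a single direction $d$ for which the admissible values of $\e$ accumulate at $0$, and this is exactly where the first category hypothesis enters. Consider the continuous map $\Phi(d,\e,t)=x+\e d+t(y-x)$ on $U\times\Real\times\Real$, with $U$ a small ball of directions transverse to $y-x$. For fixed $\e\neq0$ and $t$ the map $d\mapsto\Phi(d,\e,t)$ is a homeomorphism onto an open set, so the section of $\Phi^{-1}(V)$ obtained by fixing $(\e,t)$ is a meager subset of $U$; since $\Phi^{-1}(V)$ is Borel, the Kuratowski--Ulam theorem makes $\Phi^{-1}(V)$ meager, then produces a comeager set of directions $d$ for which $\{(\e,t):x+\e d+t(y-x)\in V\}$ is meager in $\Real^2$, and finally a comeager set of $\e$ for which $\{t:x+\e d+t(y-x)\in V\}$ is meager in $\Real$, hence of empty interior, and clustering at $\e=0$. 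Choosing such a $d$ supplies the perturbed segments needed above. Carrying out this iterated category argument cleanly, with the attendant Borel/Baire--property bookkeeping, is the step I expect to be the most delicate; the countability hypothesis, by contrast, is used only in the one--dimensional argument on $D\setminus V$.
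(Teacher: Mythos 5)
First, a point of order: the paper itself contains no proof of Proposition \ref{ppp} --- it is quoted verbatim as Theorem 3 of \cite{KaPiLa} --- so your attempt can only be measured against that source and on its own merits. Your first two steps are correct: the reduction to the sections $g_{x,y}$, and the proof that $T$ is monotone on $D\setminus V$ (hemicontinuity makes $g_{x,y}$ continuous, local monotonicity plus countability of $[x,y]\cap V$ makes it locally nondecreasing off a countable set, and the injection $c\mapsto t_0(c)$ of an interval into that countable set validly concludes). The parallel-perturbation idea in the last step --- translating \emph{both} endpoints by $\e d$ so that the pairing vector $y-x$ stays fixed and hemicontinuity in the single direction $d$ suffices --- is also exactly the right mechanism.

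The genuine gap is the sentence ``since $\Phi^{-1}(V)$ is Borel, the Kuratowski--Ulam theorem makes $\Phi^{-1}(V)$ meager'', which fails on both counts. First, a set of first category need not be Borel (every subset of a closed nowhere dense set is nowhere dense), so neither $V$ nor $\Phi^{-1}(V)$ may be assumed Borel. Second, and more seriously, the direction of Kuratowski--Ulam you invoke --- comeager-many meager sections imply the whole set is meager --- requires the factor in which those sections live to be second countable; in your product that factor is the ball $U\subseteq X$ of directions, so your argument only covers separable $X$, whereas the proposition concerns arbitrary real Banach spaces. Both defects vanish once you notice that this implication needs no Kuratowski--Ulam at all: off the slice $\{\e=0\}$, which is closed and nowhere dense in the product, $\Phi$ is an \emph{open} continuous map, since $\Phi\bigl(B(d_0,r)\times\{\e\}\times\{t\}\bigr)=B\bigl(\Phi(d_0,\e,t),|\e|r\bigr)$, and preimages of meager sets under continuous open maps are meager; hence $\Phi^{-1}(V)$ is meager outright, with no Borel or Baire-property bookkeeping. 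The two remaining uses of Kuratowski--Ulam are in its unproblematic Fubini direction, with sections lying in $\Real^2$ and in $\Real$ (both second countable), and go through in any Banach space. You could also shorten the whole step: once $T$ is monotone on $D\setminus V$, you do not need perturbed segments that avoid $V$ densely, only perturbed \emph{endpoints} in $D\setminus V$. Applying a Baire-category argument (on the unit sphere and on parameter intervals, using closed nowhere dense sets covering $V$) to the meager set $(V-x)\cup(V-y)$ produces one direction $u$ and $t_n\downarrow 0$ with $x+t_nu$ and $y+t_nu$ both in $D\setminus V$; then monotonicity on $D\setminus V$ gives $\<T(y+t_nu)-T(x+t_nu),y-x\>\ge 0$ and hemicontinuity yields $\<T(y)-T(x),y-x\>\ge 0$, eliminating the product-space apparatus entirely --- this is, in essence, the route of \cite{KaPiLa}.
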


\begin{rmk}\rm Note that the conclusion of Proposition \ref{ppp} remains valid if the condition $ \forall x,y\in D\setminus V,\mbox{ the set }[x,y]\cap V\mbox{ is countable, possibly empty}$  is replaced by the condition
$$ \forall x,y\in D\setminus V,\mbox{ the set }[x,y]\cap V\mbox{ contains no nonempty perfect subsets.}$$
\end{rmk}

\begin{rmk}\rm\label{r22}
It is worth mentioning that Proposition \ref{ppp} cannot be extended to the set-valued case since, according to a theorem of Kenderov (see \cite{Ke}, Proposition 2.6), a monotone operator that is lower semicontinuous at a point of its domain must be single-valued at that point.

Furthermore, Proposition \ref{ppp} does not remain valid for set-valued upper semicontinuous operators (see \cite{KaPiLa}, Example 2). Next we also provide an example of an upper semicontinuous set-valued operator which is strictly pseudomonotone on a dense subset  of its open and convex domain, but which is  not even quasimonotone on its domain.
\end{rmk}

\begin{ex}\rm
\label{ex2} Consider the operator
\begin{equation*}
F:\mathbb{R} \rightrightarrows\mathbb{R} ,\, F(x):=\left\{
\begin{array}{ll}
x+2,\mbox{ if } x\in{\mathbb{R}}\setminus\{0\}, \\
\, [-1,3],\mbox{ if } x=0.
\end{array}
\right.
\end{equation*}

Let $V:=\{0\}.$ Then, $F$ is upper semicontinuous on $D(F)={\mathbb{R}}$ and
strictly pseudomonotone on $D(F) \setminus V,$ but is not even quasimonotone
on $D(F).$\\
Obviously $D(F)\setminus V={\mathbb{R}}\setminus\{0\}$ is dense in $D(F)={\mathbb{R}}.$ Since $F(x)=x+2$ on $D(F)\setminus V,$ it is obvious that there is strictly pseudomonotone. It can easily be verified that $F$ is upper semicontinuous on ${\mathbb{R}}.$ But, for $(x,x^*)=(-1,1)\in G(F)$
and $(y,y^*)=(0,-1)\in G(F),$ we get $\langle x^*,y-x\rangle=1>0$ and $\langle y^*,y-x\rangle=-1<0,$ which shows that $F$ is not quasimonotone on ${\mathbb{R}}.$

\end{ex}

\begin{rmk}\rm
According to Example \ref{ex1}, Example \ref{ex2} and Remark \ref{r22} we can obtain global generalized monotonicity results for  a set-valued operator, based on its local appropriate generalized monotonicity property, only in the case when the operator is lower semicontinuous and the mentioned monotonicity property is one of the following: strict quasimonotonicity, pseudomonotonicity or strict pseudomonotonicity, respectively.
\end{rmk}

\subsection{Self segment-dense sets}
\noindent

In  \cite{DTL}, Definition 3.4, The Luc has introduced the  notion of a so-called \emph{segment-dense} set. Let $V\subseteq X$ be a convex set. One says that the set $U\subseteq V$ is segment-dense in  $V$ if for each $x\in V$ there can be found $y\in U$  such that $x$ is a cluster point of the set $[x,y]\cap U.$

In what follows we introduce a denseness notion, slightly different from the concept of The Luc presented above, but which is sufficient for the conditions in Proposition \ref{p},  Proposition \ref{pp}, respectively Proposition \ref{ppp}.

\begin{defn}\label{dd} Consider the sets  $U\subseteq V\subseteq X$ and assume that $V$ is convex.

 We say that $U$ is self segment-dense  in $V$ if   $U$ is dense in $V$ and $$\forall x,y\in U,\mbox{  the set }[x,y]\cap U\mbox{  is dense in }[x,y].$$
\end{defn}

\begin{rmk}\rm
Obviously in one dimension the concepts of a segment-dense set  respectively a self segment-dense set are equivalent to the concept of a dense set.

Assume that $C, D,$ respectively $V$ are sets as in Proposition \ref{p}, Proposition \ref{pp}, respectively Proposition \ref{ppp}. Then the set $D\setminus C$, respectively the set $D\setminus V$ is self segment-dense  in $D.$
\end{rmk}

In what follows we provide some examples of self segment-dense sets.

\begin{ex}\rm (see also Example 3.1, \cite{LYAK})\label{EEE}  Let $V$ be the two dimensional Euclidean space $\Real^2$ and define $U$ to be the set
$$U :=\{(p,q) \in\Real^2 : p\in \mathbb{Q},\, q\in\mathbb{Q}\},$$
where $\mathbb{Q}$ denotes the set of all rational numbers. Then, it is clear that $U$ is dense in $\Real^2.$ On the other hand $U$ is not segment-dense in $\Real^2,$ since for $x=(0, \sqrt{2})\in \Real^2 $ and for every $y=(p,q)\in U$, one has $[x, y] \cap U = \{y\}.$

It can easily be observed that  $U$ is also  self segment-dense  in $\Real^2$, since for every $x,y\in U$ $x=(p,q),\,y=(r,s)$ we have $[x,y]\cap U=\{(p+t(r-p),q+t(s-q)): t\in[0,1]\cap\mathbb{Q}\},$ which is obviously dense in $[x,y].$
\end{ex}

\begin{ex}\rm Let $V^{0}\left( \left[ 0,1\right] ,\mathbb{R}\right) $ be the space of
piecewise constant $\mathbb{R}$-valued functions.

Obviously $V^{0}\left( \left[ 0,1\right] ,\mathbb{Q}\right) $ is dense in $%
V^{0}\left( \left[ 0,1\right] ,\mathbb{R}\right) $. So, it follows from the
density of $V^{0}\left( \left[ 0,1\right] ,\mathbb{R}\right) $ in $%
L^{2}\left( \left[ 0,1\right] ,\mathbb{R}\right) $ that
\[
V^{0}\left( \left[ 0,1\right] ,\mathbb{Q}\right) \text{ is dense in }
L^{2}\left( \left[ 0,1\right] ,\mathbb{R}\right) .
\]
Moreover, $V^{0}\left( \left[ 0,1\right] ,\mathbb{Q}\right) $ is self
segment-dense in $L^{2}\left( \left[ 0,1\right] ,\mathbb{R}\right) $, and
for any $u,v\in V^{0}\left( \left[ 0,1\right] ,\mathbb{Q}\right) $ the
segment $\left[ u,v\right] $ is not contained in $V^{0}\left( \left[ 0,1%
\right] ,\mathbb{Q}\right) $. However, $V^{0}\left( \left[ 0,1\right] ,%
\mathbb{Q}\right) $ is not segment-dense (in the sense of The Luc) in $%
L^{2}\left( \left[ 0,1\right] ,\mathbb{R}\right) $, as can be seen by taking
$u\left( x\right) =\sqrt{2}$ for all $x\in \left[ 0,1\right] $ and for any $%
v\in V^{0}\left( \left[ 0,1\right] ,\mathbb{Q}\right) $ we have that $\left[
u,v\right] \cap V^{0}\left( \left[ 0,1\right] ,\mathbb{Q}\right) =\{v\}$.

\bigskip

The proof of the fact that $V^{0}\left( \left[ 0,1\right] ,\mathbb{R}\right)
$ is dense in $L^{2}\left( \left[ 0,1\right] ,\mathbb{R}\right) $ relies on
the following classical arguments.

Let $\varepsilon >0$ be given. We have that $C_{c}^{\infty }\left( \left[ 0,1%
\right] ,\mathbb{R}\right) $ is dense in $L^{2}\left( \left[ 0,1\right] ,%
\mathbb{R}\right) $ (see for example \cite{B}), so for a given $u\in
L^{2}\left( \left[ 0,1\right] ,\mathbb{R}\right) $ the exists $%
u^{\varepsilon }\in C_{c}^{\infty }\left( \left[ 0,1\right] ,\mathbb{R}%
\right) $ such that%
\[
\left\Vert u-u^{\varepsilon }\right\Vert _{2}<\varepsilon /2.
\]%
Now since $u^{\varepsilon }$ is continuous on the compact set $[0,1]$,
uniform continuity guarantees that there exists $\delta >0$ such that for
any $x,y\in \left[ 0,1\right] $ with $\left\vert x-y\right\vert <\delta $ we
have%
\[
\left\vert u^{\varepsilon }\left( x\right) -u^{\varepsilon }\left( y\right)
\right\vert <\varepsilon /2.
\]%
If we take a regular grid (partition) with step $h$ small enough ($h<\delta $%
) and choose $u^{h}\in V^{0}\left( \left[ 0,1\right] ,\mathbb{R}\right) $
such that $u^{h}\left( \left( i-\frac{1}{2}\right) h\right) =u^{\varepsilon
}\left( \left( i-\frac{1}{2}\right) h\right) $ for $i:=0,1,\ldots \frac{1}{h}
$ then%
\[
\left\Vert u^{h}-u^{\varepsilon }\right\Vert _{\infty }<\varepsilon /2.
\]%
Hence%
\begin{eqnarray*}
\left\Vert u-u^{h}\right\Vert _{2} &\leq &\left\Vert u-u^{\varepsilon
}\right\Vert _{2}+\left\Vert u^{\varepsilon }-u^{h}\right\Vert _{2} \\
&\leq &\varepsilon /2+\varepsilon /2.
\end{eqnarray*}

\end{ex}

In the next section we show that  a lower semicontinuous set-valued operator that possesses  a local generalized monotonicity property on a  self segment-dense subset of its domain is actually globally generalized monotone. According to the previous remark, this result not only extends but also improves the results presented in Proposition \ref{pp}.  


\section{On the local generalized monotonicity of set-valued maps on dense subsets}

In this section we show that the local strict quasimonotonicity, local pseudomonotonicity, respectively local strict pseudomonotonicity property of a lower semicontinuous set-valued operator  on a self segment-dense subset of its domain provides the global appropriate monotonicity of that operator under some mild conditions. We attain this goal gradually, by showing first that the previous statement holds for real set-valued lower semicontinuous functions of one real variable. We also show that our conditions cannot be weakened. As consequences we obtain some results that improve the results stated in \cite{La}.

\subsection{Locally quasiomonotone and locally strictly quasimonotone operators}
\noindent

In this paragraph we study  the local quasimonotonicity, respectively the local strict quasimonotonicity of set-valued operators on dense sets.  As it was expected some additional conditions are needed in order to assure that local generalized monotonicity on a dense subset of a lower semicontinuous set-valued operator implies its global counterpart. Such conditions will be given and by an example we show that our conditions are essential in obtaining these results.

Let $F:I\subseteq{\mathbb{R}}\rightrightarrows {\mathbb{R}}$ be a set-valued function. Let us introduce the following notations.
\begin{equation*}
F(x)\ge 0\Leftrightarrow x^*\ge 0,\,\forall x^*\in F(x),\mbox{ and } F(x)>0, \mbox{ if } F(x)\ge0\mbox{ and }0\not\in F(x).
\end{equation*}
Obviously the inequalities $F(x)\le 0$, respectively $F(x)<0$ can be introduced analogously.

\begin{lem}
\label{l31} Let $I\subseteq {\mathbb{R}}$ be an interval and consider the lower semicontinuous set-valued function $F:I\rightrightarrows {\mathbb{R}}.$
Let $J\subseteq I$ dense in $I$ and assume that  $F$  is locally quasimonotone on $J.$ Then, for any fixed $x\in I$ one has $F(x)\ge0,$ or $F(x)\le 0.$
\end{lem}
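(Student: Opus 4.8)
The plan is to argue by contradiction, exploiting the fact that in one dimension quasimonotonicity forbids a point carrying a strictly positive selection value from lying to the \emph{left} of a point carrying a strictly negative one. The negation of the conclusion ``$F(x)\ge 0$ or $F(x)\le 0$'' is precisely that $F(x)$ contains both a strictly positive element $b$ and a strictly negative element $a$, so I would begin by assuming $a,b\in F(x)$ with $a<0<b$; in particular $x\in D(F)$. Since $F$ is locally quasimonotone on $J$, I fix an open neighborhood $U_x$ of $x$ for which $F|_{U_x\cap J}$ is quasimonotone.

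The key step is to propagate the two sign conditions at $x$ to nearby points by means of lower semicontinuity. Applying the definition of lower semicontinuity at $x$ to the open set $(-\infty,0)$, which meets $F(x)$ because $a<0$, yields a neighborhood $M_1$ of $x$ with $F(y)\cap(-\infty,0)\neq\emptyset$ for every $y\in M_1\cap D(F)$; applying it to $(0,\infty)$, which meets $F(x)$ because $b>0$, yields a neighborhood $M_2$ with $F(y)\cap(0,\infty)\neq\emptyset$ for every $y\in M_2\cap D(F)$. Setting $W:=U_x\cap M_1\cap M_2$, I conclude that every point of $W\cap D(F)$ carries simultaneously a strictly positive and a strictly negative value of $F$.

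Because $J$ is dense in the (nondegenerate) interval $I$ and $W$ is a neighborhood of $x\in I$, the set $W\cap J$ is infinite, so I may choose two distinct points $y<z$ in it; as the local quasimonotonicity hypothesis is only meaningful when $F$ is nonempty-valued on $J$, these points lie in $D(F)$ and the previous step applies to them. I then pick $y^*\in F(y)$ with $y^*>0$ and $z^*\in F(z)$ with $z^*<0$. Now $(y,y^*),(z,z^*)\in G(F|_{U_x\cap J})$ and, since $z-y>0$, one has $\langle y^*,z-y\rangle=y^*(z-y)>0$, so quasimonotonicity forces $\langle z^*,z-y\rangle=z^*(z-y)\ge 0$; but $z^*<0$ and $z-y>0$ give $z^*(z-y)<0$, a contradiction. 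Hence $F(x)$ cannot contain values of both signs, which is exactly the assertion $F(x)\ge 0$ or $F(x)\le 0$.

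The main obstacle is the bookkeeping that guarantees the two chosen points genuinely lie in the graph of $F|_{U_x\cap J}$: one must arrange simultaneously that $y,z\in J$ (so that quasimonotonicity applies), that $y,z\in D(F)$ (so that selections exist), and that $y,z\in W$ (so that lower semicontinuity supplies the required signs). This is where the denseness of $J$ is essential, and where a little care is needed when $x$ is an endpoint of $I$, since then both $y$ and $z$ must be taken on the same side of $x$; the argument is insensitive to this, as it never requires points on both sides of $x$.
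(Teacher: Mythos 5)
Your argument has a genuine gap at its very first use of the hypothesis: you write that ``since $F$ is locally quasimonotone on $J$, I fix an open neighborhood $U_x$ of $x$ for which $F|_{U_x\cap J}$ is quasimonotone.'' But the lemma asserts its conclusion for an arbitrary $x\in I$, while local quasimonotonicity on $J$ (Definition \ref{d2.1}) only provides such a neighborhood for points \emph{of} $J$. For $x\in I\setminus J$ no quasimonotonicity neighborhood centered at $x$ is guaranteed to exist, so the set $U_x$ in which your final contradiction takes place is simply not available. This is not a removable technicality: the lemma is later invoked precisely at points of $I\setminus J$ (for instance in the proof of Theorem \ref{t31}, where one needs $F(x)>0$ or $F(x)<0$ for all $x\in I\setminus J$), so establishing the conclusion only for $x\in J$ would not suffice.

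The repair is exactly the paper's maneuver, and your remaining steps then go through verbatim. First use lower semicontinuity at $x$ (with the open sets $(-\infty,0)$ and $(0,\infty)$, or with small intervals around the two values $a<0<b$, as the paper does) to produce a neighborhood $W$ of $x$ such that every point of $W\cap D(F)$ carries both a strictly positive and a strictly negative value of $F$. Then use density of $J$ to pick a point $y\in W\cap J$, and take the quasimonotonicity neighborhood $U_y$ anchored at \emph{that} point, which is legitimate because $y\in J$. Finally choose two distinct points $u<v$ in $W\cap U_y\cap J$ and run your sign contradiction: $u$ carries a positive value $u^*$, $v$ a negative value $v^*$, and $\langle u^*,v-u\rangle>0$ together with $\langle v^*,v-u\rangle<0$ violates the quasimonotonicity of $F|_{U_y\cap J}$. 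Apart from where the quasimonotonicity neighborhood is anchored, your lower semicontinuity step and your concluding contradiction coincide with the paper's proof.
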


\begin{proof}
Let $x\in I$, and suppose, that there exist $x_1^*,x_2^*\in F(x),\, x_1^*>0,\,x_2^*<0.$ Consider the intervals $V_1=(x_1^*-\epsilon,x_1^*+ \epsilon),$ respectively $V_2=(x_2^*-\epsilon,x_2^*+\epsilon),$ where $\epsilon>0$ such that $x_1^*-\epsilon>0$ and $x_2^*+\epsilon<0.$ By the lower semicontinuity of $F$ there exists an open neighborhood $U_x$ of $x$, such that for every $y\in U_x\cap I$ one has $F(y)\cap V_1\neq\emptyset$ and $F(y)\cap V_2\neq\emptyset.$ Let $y\in U_x\cap J.$ Then, according to the hypothesis of the lemma, there exists an open neighborhood $U_y$ of $y$ such that $F|_{U_y\cap J}$ is quasimonotone.

Let $U=U_x\cap U_y\cap J$ and let $u,v\in U,\, u\neq v.$ Then, there exist $u_1^*,u_2^*\in F(u),\, v_1^*,v_2^*\in F(v)$ such that $u_1^*>0,v_1^*>0,\mbox{ and }u_2^*<0,v_2^*<0.$

Hence,
\begin{equation*}
\<u_1^*,v-u\>>0,\mbox{ or } \<u_2^*,v-u\>>0,
\end{equation*}
on the other hand
\begin{equation*}
\<v_1^*,v-u\><0,\mbox{ or } \<v_2^*,v-u\><0,
\end{equation*}
which contradicts the fact that $F$ is quasimonotone in $U.$

Consequently, for any $x\in I$ one has $F(x)\ge 0,$ or $F(x)\le 0.$
\end{proof}

Further, we need the following result which proof is straightforward.

\begin{prop}
\label{p32} Let $F:I\subseteq{\mathbb{R}}\rightrightarrows {\mathbb{R}}$ a set-valued function.

\begin{itemize}
\item[(a)] If $F(x)\ge 0$ for every $x\in I$, (respectively $F(x)\le0$ for every $x\in I$) , then $F$ is quasimonotone on $I$.

\item[(b)] If $F(x)=\{0\}$ for some $x\in I$, and $F(z)\le 0$ for every $z<x,\,z\in I,$ respectively $F(y)\ge 0$ for every $x< y,\,y\in I,$ then $F$ is
quasimonotone.
\end{itemize}
\end{prop}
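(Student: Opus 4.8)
The plan is to reduce both parts to the one-dimensional pairing $\<x^*,y-x\>=x^*(y-x)$ and argue purely by inspecting signs of products of real numbers, since in $\Real$ the quasimonotonicity of $F$ reads simply: for all $x,y\in I$, $x^*\in F(x)$, $y^*\in F(y)$, the implication $x^*(y-x)>0\Rightarrow y^*(y-x)\ge0$ holds. For part (a), assume $F(x)\ge0$ for every $x\in I$ and fix $x,y\in I$, $x^*\in F(x)$, $y^*\in F(y)$ with $x^*(y-x)>0$. Since $x^*\ge0$, positivity of the product forces both $x^*>0$ and $y-x>0$; in particular $y>x$. Then $y^*\ge0$ together with $y-x>0$ yields $y^*(y-x)\ge0$, which is exactly the required conclusion. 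The case $F(x)\le0$ for all $x$ is symmetric: $x^*(y-x)>0$ now forces $y-x<0$, and $y^*\le0$ with $y-x<0$ again gives $y^*(y-x)\ge0$.

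For part (b), let $x_0\in I$ denote the point with $F(x_0)=\{0\}$, so that $F(z)\le0$ for $z<x_0$ and $F(z)\ge0$ for $z>x_0$. I would verify the implication for an arbitrary pair $a,b\in I$, $a^*\in F(a)$, $b^*\in F(b)$, by distinguishing the positions of $a$ and $b$ relative to $x_0$. If both $a$ and $b$ lie on the same side of $x_0$, the conclusion follows from part (a) applied on the sub-half of $I$ on which $F$ has constant sign. If $a=x_0$ then $a^*=0$, so the premise $a^*(b-a)>0$ fails and the implication is vacuous; if $b=x_0$ then $b^*=0$, so $b^*(b-a)=0\ge0$ and the conclusion holds trivially. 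The only remaining configuration is when $a$ and $b$ straddle $x_0$, say $a<x_0<b$ with $a^*\le0$ and $b^*\ge0$: here $b-a>0$, so $a^*(b-a)>0$ would force $a^*>0$, contradicting $a^*\le0$; hence the premise can never be met and quasimonotonicity holds vacuously. The symmetric straddle $b<x_0<a$ is identical.

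I do not anticipate any genuine obstacle, and indeed the statement is flagged as having a straightforward proof: the entire argument is bookkeeping of the signs of products of reals. The only point requiring mild care is recognizing that in the mixed configuration of part (b) the premise $x^*(y-x)>0$ is simply never satisfiable, so that there is nothing to check rather than something to establish; this is the same mechanism that makes the prototypical ``$\le0$ on the left, $\ge0$ on the right'' sign pattern (the derivative pattern of a quasiconvex function) quasimonotone.
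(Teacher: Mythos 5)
Your proof is correct: the paper itself omits the argument, stating only that it is ``straightforward,'' and your case-by-case sign bookkeeping (including the observation that in the straddling configuration of part (b) the premise $\<x^*,y-x\>>0$ is never satisfiable) is exactly the intended elementary verification. Nothing is missing; in particular you correctly handle both ordered roles of the pair and the endpoint cases $a=x_0$, $b=x_0$.
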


Now we are able to state and prove the following result concerning on the strict quasimonotonicity of a set-valued function.
\begin{thm}
\label{t31} Let $I\subseteq {\mathbb{R}}$ be an interval and consider the lower semicontinuous set-valued function $F:I\rightrightarrows {\mathbb{R}}.$
Let $J\subseteq I$ dense in $I$ and assume that $F$ is locally {strictly quasimonotone} on $J.$ Further, assume that $0\not\in F(x),$
for all $x\in I\setminus J.$ Then $F$ is {strictly quasimonotone} on $I.$
\end{thm}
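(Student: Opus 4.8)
The plan is to verify the two clauses defining strict quasimonotonicity separately: first that $F$ is quasimonotone on $I$, and then the strict clause, namely that for every $x,y\in D(F)$ with $x\neq y$ there are $z\in(x,y)$ and $z^*\in F(z)$ with $\langle z^*,y-x\rangle\neq 0$. Throughout I would work with the three sets determined by the sign of the values, $P=\{x:F(x)\text{ contains a positive element}\}$, $N=\{x:F(x)\text{ contains a negative element}\}$, and $Z=\{x:F(x)=\{0\}\}$. Since local strict quasimonotonicity on $J$ entails local quasimonotonicity on $J$, Lemma \ref{l31} applies and every $F(x)$ is sign-definite; hence $P$ and $N$ are disjoint and $I$ is covered by $P$, $N$ and $Z$. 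Lower semicontinuity makes $P$ and $N$ open: if $x^*>0\in F(x)$, testing lower semicontinuity against a small open interval around $x^*$ contained in $(0,\infty)$ produces a neighbourhood on which $F$ still meets the positive axis (and symmetrically for $N$). Finally, the hypothesis $0\notin F(x)$ for $x\in I\setminus J$ says precisely that $Z\subseteq J$, which is the mechanism that confines all genuinely-zero points to the set carrying the local assumption.

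\emph{Quasimonotonicity.} I would argue by contradiction. If $F$ fails to be quasimonotone then, using sign-definiteness of the values, one obtains $x_0<y_0$ with $x_0\in P$ and $y_0\in N$. Set $d=\inf\big(N\cap[x_0,y_0]\big)$. Openness and disjointness of $P,N$ give $x_0<d<y_0$, and $d\notin N$ (else a left neighbourhood of $d$ would lie in $N$, contradicting the infimum), so $F(d)\ge 0$; moreover $d\notin P$, for otherwise a right neighbourhood of $d$ would lie in $P$ and exclude the $N$-points that must accumulate at $d^+$. Hence $d\in Z$, so $0\in F(d)$ and therefore $d\in J$. To the left of $d$ all values are $\ge 0$, and I would split on the behaviour just left of $d$. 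If some interval $(d-\delta,d)$ misses $P$, then $F\equiv\{0\}$ there, so this interval lies in $Z\subseteq J$; but an interval contained in $J$ on which $F$ vanishes identically cannot support a locally strictly quasimonotone restriction, a contradiction. Otherwise $P$ accumulates at $d^-$; then, choosing a neighbourhood $U_d$ on which $F|_{U_d\cap J}$ is quasimonotone and using density of $P\cap J$ in $P$ and of $N\cap J$ in $N$, I would select $a\in P\cap J\cap U_d$ with $a<d$ and $b\in N\cap J\cap U_d$ with $b>d$, so that $\langle a^*,b-a\rangle>0$ while $\langle b^*,b-a\rangle<0$ for suitable $a^*\in F(a)$, $b^*\in F(b)$, contradicting quasimonotonicity of $F|_{U_d\cap J}$. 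Both branches being impossible, $F$ is quasimonotone.

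\emph{The strict clause.} Given $x<y$ in $D(F)$, I would pick any $w\in(x,y)\cap J$ (nonempty by density of $J$) and the neighbourhood $U_w$ on which $F|_{U_w\cap J}$ is strictly quasimonotone. Choosing two distinct points $a<b$ of $J$ inside $U_w\cap(x,y)$, strict quasimonotonicity of the restriction yields $z\in(a,b)$ and $z^*\in F(z)$ with $z^*\neq 0$; since $(a,b)\subseteq(x,y)$ this pair $(z,z^*)$ does the job. Notably this half uses only density of $J$ and the local strict property, not the assumption $0\notin F$ on $I\setminus J$.

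\emph{Main obstacle.} The delicate part is entirely the analysis of the zero set $Z$ near the sign-change location $d$: one must simultaneously exclude that $F$ vanishes on a whole subinterval (ruled out because such an interval lies in $J$ and would violate local strictness) and that a positive value sits immediately to the left of a negative value (ruled out by local quasimonotonicity). The hypothesis $0\notin F$ on $I\setminus J$ is exactly the hinge, since it forces every genuinely-zero point into $J$, the only place the local assumption can be invoked. I would also keep the domain convention in view, taking $J\subseteq D(F)$ so that the accumulation and selection steps land on points where $F$ is nonempty.
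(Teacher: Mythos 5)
Your proof is correct, but it takes a genuinely different route from the paper's. The paper starts from the same two ingredients you do --- Lemma \ref{l31} and the lower-semicontinuity sign-propagation that makes your sets $P$ and $N$ open --- but instead of proving global quasimonotonicity directly it shows that $F$ is \emph{locally} strictly quasimonotone on all of $I$ (splitting into the cases $F(x)\neq\{0\}$ and $F(x)=\{0\}$, with Proposition \ref{p32} supplying quasimonotonicity of sign-definite functions), and then invokes Proposition \ref{p31}, the local-to-global transfer principle for set-valued functions on intervals, which itself rests on single-valued selections and the results of \cite{La}. You bypass Propositions \ref{p31} and \ref{p32} entirely: global quasimonotonicity comes in one stroke from the infimum argument at the first sign-change point $d$, and your two branches there (an interval of zeros, necessarily inside $J$, contradicting local strictness; versus $P$ accumulating at $d^-$ and $N$ at $d^+$, contradicting quasimonotonicity of $F|_{U_d\cap J}$) play the role of the paper's dichotomy over its conditions (a)--(c) in the $F(x)=\{0\}$ case. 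Your treatment of the strict clause is essentially the same as the paper's (density of $J$ plus local strictness around some $w\in(x,y)\cap J$), and you are right that it needs neither the hypothesis $0\notin F$ on $I\setminus J$ nor Lemma \ref{l31}. What your route buys is self-containedness: it exploits the order structure of $\mathbb{R}$ directly and needs no selection-based transfer result. What the paper's route buys is modularity: once local strict quasimonotonicity on $I$ is established, Proposition \ref{p31} finishes the job, and the identical template is reused for the pseudomonotone and strictly pseudomonotone theorems. One small refinement to your closing remark: the convention $J\subseteq D(F)$ is not quite enough, since your argument also needs $F(d)\neq\emptyset$ at the sign-change point $d$ (to conclude $d\in Z$) and nonempty values on the interval in your first branch; the clean convention, implicit in the paper's proof as well, is $D(F)=I$.
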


\begin{proof}
According to Lemma \ref{l31}, for any $x\in I$ one has $F(x)\ge 0,$ or $F(x)\le 0.$ Note that according to the hypothesis of the theorem, for any $x\in I\setminus J$ one has $F(x)>0,$ or $F(x)<0.$

We show next that $F$ is locally strictly quasimonotone on $I$, that is, every $x\in I$ admits an open neighborhood $U$, such that $F|_{U\cap I}$ is strictly quasimonotone. Let $x\in I$ and assume that $F(x)\neq\{0\}.$ Then we have $F(x)\ge 0,$ or $F(x)\le 0.$ Assume that $F(x)\ge 0$ the other case can be treated
analogously. Obviously there exists $x^*\in F(x), x^*>0.$ Let $V=(x^*-\epsilon,x_1^*+\epsilon),$ an open interval, where $\epsilon>0$ such
that $x^*-\epsilon>0.$ Since $F$ is lower semicontinuous, there exists an open neighborhood $U_x$ of $x$, such that for every $y\in U_x\cap I$ we have
$F(y)\cap V\neq\emptyset.$ But then, according to Lemma \ref{l31} $F(y)\ge 0,$ for all $y\in U_x\cap I,$ hence from Proposition \ref{p32} we get that $F$
is quasimonotone on $U_x\cap I.$ Let $u,v\in U_x\cap I,\, u\neq v.$ Then, by the denseness of $J$ in $I$, we obtain that there exists $w\in(u,v)\cap J,$ and
according to the hypothesis of the theorem $w$ admits an open neighborhood $U_w$ such that $F|_{U_w\cap J}$ is strictly quasimonotone. This in particular shows, that
there exists $z\in(u,v)$ and $z^*\in F(z)$, such that $\<z^*,v-u\>\neq 0.$ Hence, $F$ is strictly quasimonotone on $U_x\cap I.$

If $F(x)=\{0\}$, then $x\in J$, hence there exists an open neighborhood $U_x$ of $x$ such that $F|_{U_x\cap J}$ is strictly quasimonotone. Note that according to
Proposition \ref{p32}, in case that there exists an open neighborhood $U$ of $x$ such that one of the following conditions is fulfilled

\begin{itemize}
\item[(a)] $F(y)\ge0$ for every $y\in U\cap I,$ or,

\item[(b)] $F(y)\le 0$ for every $y\in U\cap I,$ or,

\item[(c)] $F(z)\le 0$ and $F(y)\ge 0$ or every $z\le x\le y,\,y,z\in U\cap
I,$
\end{itemize}

then $F|_{U\cap I}$ is strictly quasimonotone.

Assume now, that none of the conditions $(a)-(c)$ is fulfilled. Then, for every neighborhood $U$ of $x$, there exists $y,z\in U\cap I,\, z>x,\,y>x$ (or $z<x,\,y<x,$) such that $F(y)\ge 0,\, F(y)\neq\{0\}$ and $F(z)\le 0,\,F(z)\neq\{0\}.$ Let $y,z\in U_x\cap I,\, z>x,\,y>x,$ (the case $y,z\in U\cap I,\, z<x,\,y<x$ can be treated similarly). Obviously, we can assume that $z>y.$ According to the previous part of the proof there exist an open neighborhood $U_y$ of $y,$ respectively $U_z$ of $z$ such that $F(u)\ge 0$ for all $u\in{U_y\cap I,}$ respectively $F(v)\le0$ for all $v\in{U_z\cap I}.$

Let $u\in U_y\cap U_x\cap J,$ respectively $v\in U_z\cap U_x\cap J,$ such that $u<v,$ and with the property that there exist $u^*\in F(u),\, u^*>0$,
respectively $v^*\in F(v),\,v^*<0.$ Such $u^*$ and $v^*$ must exist since $F$ is strictly quasimonotone on $U_x\cap J.$ Then, $\<u^*,v-u\>>0$ and $\<v^*,v-u\><0,$
contradiction.

We have shown, that $F$ is locally strictly quasimonotone on $I$. According to Proposition \ref{p31}, $F$ is globally strictly quasimonotone on $I$ and the proof is completely done.
\end{proof}

In what follows we extend Theorem \ref{t31} for  lower semicontinuous operators that are locally strictly quasimonotone on a self segment-dense  subset of their convex domain.

\begin{thm}\label{t32} Let $X$ be a real Banach space and let $X^*$ be its topological dual. Let $T:X\rightrightarrows X^*$ be a lower semicontinuous operator with
convex domain $D(T).$ Let $D\subseteq D(T)$ self segment-dense in $D(T)$ and assume that $T$ is locally strictly quasimonotone on $D.$ Assume further, that the following condition holds:
\begin{equation}  \label{e1}
\forall x,y\in D(T),\, z\in[x,y]\cap(D(T)\setminus D)\mbox{ and }z^*\in T(z) \mbox{ one has }\<z^*,y-x\>\neq0.
\end{equation}
Then $T$ is {strictly quasimonotone}.
\end{thm}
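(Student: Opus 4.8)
The plan is to reduce the Banach-space statement of Theorem \ref{t32} to the one-dimensional Theorem \ref{t31} by restricting the operator $T$ to line segments. For each pair of points $x,y\in D(T)$ with $x\neq y$, I would parametrize the segment $[x,y]$ by $t\mapsto \gamma(t):=x+t(y-x)$, $t\in[0,1]$, and define the one-dimensional set-valued function $F:[0,1]\rightrightarrows\mathbb{R}$ by
\begin{equation*}
F(t):=\{\langle z^*,y-x\rangle : z^*\in T(\gamma(t))\}.
\end{equation*}
The goal is to show $F$ satisfies all the hypotheses of Theorem \ref{t31} with $I=[0,1]$ and $J=\{t\in[0,1]:\gamma(t)\in D\}$, so that $F$ is strictly quasimonotone on $[0,1]$, and then to translate this back into the strict quasimonotonicity of $T$ along the segment. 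Since $x,y$ are arbitrary, this yields global strict quasimonotonicity of $T$.

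First I would verify the transfer of each hypothesis. The denseness of $J$ in $[0,1]$ is exactly where the \emph{self segment-dense} property of $D$ in $D(T)$ is needed: by definition $[x',y']\cap D$ is dense in $[x',y']$ for $x',y'\in D$, and combined with the denseness of $D$ in $D(T)$ one obtains that the parameter set $J$ is dense in $[0,1]$. Next, the lower semicontinuity of $F$ should follow from the lower semicontinuity of $T$ via Remark \ref{r21}: given $t_n\to t$ in $[0,1]$ and a value $\langle z^*,y-x\rangle\in F(t)$ with $z^*\in T(\gamma(t))$, lower semicontinuity of $T$ at $\gamma(t)$ produces $z_n^*\in T(\gamma(t_n))$ with $z_n^*\to z^*$, whence $\langle z_n^*,y-x\rangle\to\langle z^*,y-x\rangle$, giving a recovery sequence in $F(t_n)$. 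I would then check that local strict quasimonotonicity of $T$ on $D$ descends to local strict quasimonotonicity of $F$ on $J$: if $\gamma(t)\in D$ has neighborhood $U$ on which $T|_{U\cap D}$ is strictly quasimonotone, then $\gamma^{-1}(U)$ is the required neighborhood in $[0,1]$, because the pairing $\langle\cdot,y-x\rangle$ turns the inequalities $\langle u^*,\gamma(s)-\gamma(r)\rangle$ directly into $(s-r)F$-inequalities up to the positive factor coming from $\gamma(s)-\gamma(r)=(s-r)(y-x)$. Finally, the boundary condition $0\notin F(t)$ for $t\in[0,1]\setminus J$ is precisely the transcription of hypothesis \eqref{e1}: for such $t$ one has $\gamma(t)\in D(T)\setminus D$, so every $z^*\in T(\gamma(t))$ satisfies $\langle z^*,y-x\rangle\neq0$, i.e. $0\notin F(t)$.

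Once Theorem \ref{t31} is applied to conclude $F$ is strictly quasimonotone on $[0,1]$, I would unwind the definitions at the two endpoints $t=0,1$. The strict quasimonotonicity of $T$ asks: for $(x,x^*),(y,y^*)\in G(T)$ one needs the quasimonotone implication together with the existence, for $x\neq y$, of some $z\in(x,y)$ and $z^*\in T(z)$ with $\langle z^*,y-x\rangle\neq0$. The latter existence clause transfers immediately from the corresponding clause for $F$ on $(0,1)$. For the quasimonotone implication, observe that $\langle x^*,y-x\rangle>0$ means some value in $F(0)$ is positive, and strict quasimonotonicity of $F$ forces the appropriate sign on $F(1)$, which reads back as $\langle y^*,y-x\rangle\ge0$; care is needed here because $F$ is set-valued at the endpoints, so I would combine this with Lemma \ref{l31} (each $F(t)$ has a constant sign) to pin down the signs of all elements of $F(0)$ and $F(1)$ simultaneously.

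\textbf{The main obstacle} I anticipate is the careful handling of the set-valuedness at the endpoints when converting between quasimonotonicity statements for $F$ and for $T$. Because $F(0)$ and $F(1)$ may contain several values, the naive reading of ``$\langle x^*,y-x\rangle>0\Rightarrow\langle y^*,y-x\rangle\ge0$'' for a single pair $(x^*,y^*)$ does not literally coincide with a one-variable statement about the single function $F$; one must use Lemma \ref{l31} to ensure sign-consistency within each fiber $F(t)$ and verify that no value $y^*\in T(y)$ can produce $\langle y^*,y-x\rangle<0$ once some $x^*\in T(x)$ gives $\langle x^*,y-x\rangle>0$. I would also need to double-check the degenerate subcase where the entire segment is pushed to the boundary or where $F$ vanishes identically on a subinterval, ensuring the hypothesis \eqref{e1} genuinely excludes the pathological configurations. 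The rest of the argument is a routine, if slightly tedious, bookkeeping of the linear correspondence between segment parametrization and the dual pairing.
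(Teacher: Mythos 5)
There is a genuine gap, and it sits exactly at the step you describe as where self segment-denseness ``is needed.'' Your plan applies the one-dimensional Theorem \ref{t31} on the segment $[x,y]$ for \emph{every} pair $x,y\in D(T)$, and you justify the density of $J=\{t\in[0,1]:\gamma(t)\in D\}$ by saying that self segment-denseness of $D$ ``combined with the denseness of $D$ in $D(T)$'' gives density of $J$ in $[0,1]$. This is false: the self segment-dense property only guarantees that $[x,y]\cap D$ is dense in $[x,y]$ when \emph{both} endpoints $x,y$ lie in $D$, and topological denseness of $D$ in $D(T)$ says nothing about density along a fixed one-dimensional segment. The paper's own Example \ref{EEE} is a counterexample to your claim: $D=\mathbb{Q}^2$ is dense and self segment-dense in $\mathbb{R}^2$, yet for $x=(0,\sqrt{2})$ and any $y\in\mathbb{Q}^2$ the segment $[x,y]$ meets $D$ in at most one point, so $J$ is a single parameter value, not a dense set, and Theorem \ref{t31} cannot be invoked. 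Consequently your argument only proves the quasimonotone implication for pairs $x,y\in D$, not for arbitrary pairs in $D(T)$, and condition \eqref{e1} alone cannot rescue the segment argument: without a dense $J$ there is no one-dimensional machinery to apply.

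The missing idea is a limiting argument. The paper runs your segment construction only in the case $x,y\in D$ (where $J$ is genuinely dense and \eqref{e1} gives $0\notin F_{x,y}(s)$ for $s\notin J$), and then extends to arbitrary $x,y\in D(T)$ by approximation: fix $y^*\in T(y)$, choose sequences $(x_n),(y_n)\subseteq D$ with $x_n\to x$, $y_n\to y$, and use lower semicontinuity of $T$ (Remark \ref{r21}) to produce $x_n^*\in T(x_n)$, $x_n^*\to x^*$, and $y_n^*\in T(y_n)$, $y_n^*\to y^*$. Then $\<x_n^*,y_n-x_n\>\to\<x^*,y-x\>>0$ forces $\<x_n^*,y_n-x_n\>>0$ for large $n$, the already-proved case on $D$ gives $\<y_n^*,y_n-x_n\>\ge 0$, and passing to the limit yields $\<y^*,y-x\>\ge 0$. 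This is where the lower semicontinuity hypothesis earns its keep globally, not merely (as in your write-up) to transfer lower semicontinuity to the one-dimensional restriction. Incidentally, your worry about set-valuedness at the endpoints is unfounded: Theorem \ref{t31} is already a statement about set-valued maps, and its quasimonotonicity conclusion quantifies over all pairs in the graph, which reads back verbatim as the desired implication for $T$ along the segment.
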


\begin{proof}
 First of all observe, that for every $x,y\in D(T),\,x\neq y$ there exists $z\in(x,y)$ and $z^*\in T(z)$ such that $\<z^*,y-x\>\neq0.$ Indeed, if $[x,y]\cap D$ is dense in $[x,y]$ then the statement follows from the local strict quasimonotonicity property of $T$ on $D.$ Otherwise, there exists $z\in(x,y),\,z\not\in D$. But then the statement follows from condition (\ref{e1}). Hence, it is enough to show, that for all $(x,x^*),(y,y^*)\in G(T),$ such that $\<x^*,y-x\>>0$ we have $\<y^*,y-x\>\ge0.$

We divide the proof into two cases.

If $x,y\in D$ then consider the real set-valued function
\begin{equation*}
F_{x,y}:[0,1]\rightrightarrows{\mathbb{R}},\,F_{x,y}(t)=\{\<z^*,y-x\>:z^*\in T(x+t(y-x))\}.
\end{equation*}

Obviously $T$ is lower semicontinuous on $[x,y].$ Since $T$ is locally strict quasimonotone on $D$, obviously $T$ is locally strict quasimonotone on $[x,y]\cap D.$ On the other hand $D$ is self segment-dense in $D(T)$, hence $[x,y]\cap D$ is dense in $[x,y].$ Let
\begin{equation*}
J=\{t\in[0,1]:x+t(y-x)\in [x,y]\cap D\}.
\end{equation*}
Obviously $J$ is dense in $[0,1]$ and $F_{x,y}$ is locally strict quasimonotone on $J$. On the other hand, from \ref{e1} we obtain, that $0\not\in F_{x,y}(s)$ for
every $s\in[0,1]\setminus J.$ Hence, according to Theorem \ref{t31}, $F_{x,y} $ is strictly quasimonotone on $[0,1].$ In particular, the latter shows, that for $t^*\in
F_{x,y}(0)$ one has $t^*>0$ then $s^*\ge0$ for all $s^*\in F_{x,y}(1)$, or equivalently if $\<x^*,y-x\>>0$ for some $x^*\in T(x)$ then $\<y^*,y-x\>\ge0$ for all $y^*\in T(y).$

Let now $x,y\in D(T)$ arbitrary and assume that $\<x^*,y-x\>>0.$ We show that $\<y^*,y-x\>\ge0,$ for all $y^*\in T(y).$ Since $T$ is lower
semicontinuous, according to Remark \ref{r21}, for every sequence $({x_n})\subseteq D(T)$ such that $x_n\longrightarrow x$ there exists a sequence of
elements $x^*_n \in T(x_n)$ such that $x^*_{n}\longrightarrow x^*,$ respectively for every sequence $({y_n})\subseteq D(T)$ such that $y_n\longrightarrow y$ and for every $y^*\in T(y)$ there exists a sequence of elements $y^*_n \in T(y_n)$ such that $y^*_{n}\longrightarrow y^*.$ Let us fix $y^*\in T(y).$ Since $D$ is dense in $D(T)$, one can consider $(x_n),(y_n)\subseteq D.$ Obviously $\<x_n^*,y_n-x_n\>\longrightarrow \<x^*,y-x\>>0,$ hence $\<x_n^*,y_n-x_n\>>0$ if $n$ is big enough. According to the first part of the proof $\<y_n^*,y_n-x_n\>\ge0.$ By taking the limit $n\longrightarrow\infty$ one obtains $\<y^*,y-x\>\ge 0.$
\end{proof}

According to the next corollary the previous result improves Theorem  3.6 from \cite{La}.

\begin{cor} Let $T:D(T)\To X^*$ be a continuous operator with convex domain $D(T).$ Let $D\subseteq D(T)$ self segment-dense in $D(T)$ and assume that $T$ is locally strictly quasimonotone on $D.$ Assume further, that the following condition holds:
\begin{equation*}
\forall x,y\in D(T),\, z\in[x,y]\cap(D(T)\setminus D) \mbox{ one has }\<T(z),y-x\>\neq0.
\end{equation*}
Then $T$ is strictly quasimonotone.
\end{cor}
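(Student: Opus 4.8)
The plan is to deduce this corollary directly from Theorem \ref{t32} by regarding the single-valued operator $T$ as a set-valued operator with singleton values. Define $\widetilde{T}:X\rightrightarrows X^*$ by $\widetilde{T}(x)=\{T(x)\}$ for $x\in D(T)$ and $\widetilde{T}(x)=\emptyset$ otherwise, so that $D(\widetilde{T})=D(T)$ and $G(\widetilde{T})=G(T)$. The whole content of the argument is then the translation between the single-valued and the set-valued formulations, after which Theorem \ref{t32} applies verbatim.

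First I would verify the standing hypotheses of Theorem \ref{t32} for $\widetilde{T}$. The domain $D(T)$ is convex and $D\subseteq D(T)$ is self segment-dense in $D(T)$, both by assumption. The only point deserving a word of justification is that $\widetilde{T}$ is lower semicontinuous: since $T$ is continuous as a single-valued map, it is lower semicontinuous when viewed as the set-valued map $\widetilde{T}$, which is precisely the remark recorded in the preliminaries that for single-valued operators both upper and lower semicontinuity reduce to ordinary continuity. Concretely, if $x_n\to x$ in $D(T)$, then $T(x_n)\to T(x)$, and since $T(x)$ is the unique element of $\widetilde{T}(x)$, approximated by the single sequence $T(x_n)\in\widetilde{T}(x_n)$, the sequential characterization of lower semicontinuity in Remark \ref{r21} holds.

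Next I would match the remaining conditions. Local strict quasimonotonicity of the single-valued $T$ on $D$ is the same statement as local strict quasimonotonicity of $\widetilde{T}$ on $D$, because the graphs coincide and all the defining inequalities involve only points of $G(T)=G(\widetilde{T})$. For condition (\ref{e1}), observe that for any $z\in[x,y]\cap(D(T)\setminus D)$ the value set $\widetilde{T}(z)$ is the singleton $\{T(z)\}$, so the only admissible $z^*\in\widetilde{T}(z)$ is $z^*=T(z)$; hence the corollary's hypothesis $\langle T(z),y-x\rangle\neq0$ is literally the requirement $\langle z^*,y-x\rangle\neq0$ of (\ref{e1}).

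With every hypothesis verified, Theorem \ref{t32} yields that $\widetilde{T}$ is strictly quasimonotone. Since strict quasimonotonicity is a property of the graph and $G(\widetilde{T})=G(T)$, this is exactly the assertion that $T$ is strictly quasimonotone, which finishes the proof. I do not expect any genuine obstacle here; the single nontrivial observation is that single-valued continuity implies set-valued lower semicontinuity, and that fact is already available from the preliminaries.
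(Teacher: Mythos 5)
Your proposal is correct and is exactly the argument the paper intends: the corollary is stated without proof as an immediate specialization of Theorem \ref{t32}, relying on the observation (recorded in the preliminaries) that for single-valued operators lower semicontinuity in the set-valued sense coincides with ordinary continuity. Your careful verification of the hypotheses, identification of the graphs, and translation of condition (\ref{e1}) to the singleton case fills in precisely the routine details the paper leaves implicit.
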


\subsection{Locally pseudomonotone and locally strictly pseudomonotone operators}
\noindent

In what follows we study  the local pseudomonotonicity, respectively the local strict pseudomonotonicity of set-valued operators on self segment-dense sets. We need the following result which proof is straightforward.

\begin{prop}
\label{p33} Let $F:I\subseteq{\mathbb{R}}\rightrightarrows {\mathbb{R}}$ a set-valued function.

\begin{itemize}
\item[(a)] If $F(x)> 0$, for every $x\in I$, (respectively $F(x)<0$ for every $x\in I$), then $F$ is strictly pseudomonotone on $I$.

\item[(b)] If $F(x)=\{0\}$ for some $x\in I$, and $F(z)< 0$ for every $z<x,\,z\in I,$ respectively $F(y)> 0$ for every $x< y,\,y\in I,$ then $F$ is
strictly pseudomonotone on $I$.

\item[(c)] If $I$ is closed from left, $x$ is the left endpoint of $I$ such that  $F(x)\ge 0$ and $F(y)>0$ for all $y>x,\, y\in I$, then $F$ is strictly pseudomonotone on $I$.
\item[(d)] If $I$ is closed from right, $x$ is the right endpoint of $I$ such that $F(x)\le 0$ and $F(y)<0$ for all $y<x,\, y\in I$, then $F$ is strictly pseudomonotone on $I$.
\end{itemize}
\end{prop}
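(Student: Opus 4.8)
The plan is to unwind the definition of strict pseudomonotonicity and reduce everything to elementary sign bookkeeping. In the one-dimensional setting the pairing $\<u^*,v-u\>$ is simply the product $u^*(v-u)$, so strict pseudomonotonicity of $F$ amounts to the requirement that, for all $(u,u^*),(v,v^*)\in G(F)$ with $u\neq v$,
$$u^*(v-u)\ge 0\Longrightarrow v^*(v-u)>0.$$
First I would recall the sign conventions: $F(w)>0$ means every element of $F(w)$ is strictly positive, and similarly for the reversed and non-strict inequalities. With these in hand each item becomes a short case analysis on the position of $v$ relative to $u$, driven by the sign of $u^*$.

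For part (a), assume $F(w)>0$ for all $w\in I$ and fix a pair with $u^*(v-u)\ge 0$. Since $u^*>0$, the hypothesis forces $v-u\ge 0$, hence $v-u>0$ because $u\neq v$; then $v^*>0$ yields $v^*(v-u)>0$. The case $F(w)<0$ for all $w$ is symmetric, with both inequalities reversed.

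For parts (b)--(d) I would split $I$ according to the distinguished point. In (b), writing $x_0$ for the point with $F(x_0)=\{0\}$, the hypotheses say $F<0$ strictly to the left of $x_0$, $F=\{0\}$ at $x_0$, and $F>0$ strictly to the right. If $u\neq x_0$ the sign of $u^*$ is determined and the argument of part (a) applies verbatim on the appropriate side. The only delicate configuration---and the one I expect to be the single non-automatic step---is $u=x_0$: there $u^*=0$, so the antecedent $u^*(v-u)\ge 0$ holds vacuously and conveys no information about the sign of $v-u$. One therefore cannot read off the side on which $v$ lies from the inequality; instead one invokes $v\neq x_0$ together with the structural hypothesis: if $v>x_0$ then $v^*>0$ and $v-u>0$, while if $v<x_0$ then $v^*<0$ and $v-u<0$, and in either case $v^*(v-u)>0$.

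Parts (c) and (d) are handled in the same spirit, the simplification being that the distinguished point is now an endpoint. In (c), with $x_0$ the left endpoint, every $v\neq x_0$ satisfies $v>x_0$, so whenever $u=x_0$ (where possibly $0\in F(x_0)$ and $u^*=0$) the point $v$ lies automatically to the right, giving $v^*>0$ and $v-u>0$, hence $v^*(v-u)>0$ with no appeal to the antecedent; and for $u>x_0$ one repeats the part (a) computation. Part (d) is the mirror image. Thus the whole proof reduces to the sign analysis above, the only point requiring genuine care being the treatment of the distinguished point, where $u^*$ may vanish and the positional hypothesis must be used in place of the (now vacuous) antecedent.
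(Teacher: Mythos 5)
Your proof is correct, and it coincides with what the paper intends: the paper offers no argument for this proposition at all, introducing it only with the remark that its ``proof is straightforward,'' and your elementary sign bookkeeping (reducing $\<u^*,v-u\>$ to the product $u^*(v-u)$ and doing a case analysis on the position of $v$ relative to $u$) is exactly that straightforward argument. Your one point of genuine care---the distinguished point where $u^*$ may vanish, so the antecedent $u^*(v-u)\ge 0$ is vacuous and the positional hypotheses $F<0$ on the left / $F>0$ on the right (or, in (c) and (d), the endpoint condition forcing the side on which $v$ lies) must be invoked instead---is precisely the step that makes the statement true and correctly fills the gap the paper leaves implicit.
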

The following result ensures the pseudomonotonicity of a locally pseudomonotone set-valued function on a dense subset.
\begin{thm}\label{t33} Let $I\subseteq {\mathbb{R}}$ be an interval and consider the lower semicontinuous set-valued function $F:I\rightrightarrows {\mathbb{R}}.$
Let $J\subseteq I$ dense in $I$ and assume that $F$  is locally {pseudomonotone} on $J.$ Further, assume that $0\not\in F(x),$ for all $x\in I\setminus J.$ Then $F$ is {pseudomonotone} on $I.$
\end{thm}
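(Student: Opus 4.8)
The plan is to follow the architecture of the proof of Theorem~\ref{t31}: reduce the global statement to a local one on all of $I$ and then invoke Proposition~\ref{p31}. Since every pseudomonotone operator is quasimonotone, the local pseudomonotonicity of $F$ on $J$ already entails local quasimonotonicity on $J$, so Lemma~\ref{l31} applies and yields that for every $x\in I$ either $F(x)\ge 0$ or $F(x)\le 0$. Combining this with the standing hypothesis $0\notin F(x)$ for $x\in I\setminus J$, each point off $J$ satisfies the strict alternative $F(x)>0$ or $F(x)<0$. The goal is then to prove that $F$ is \emph{locally} pseudomonotone on $I$, i.e.\ that every $x\in I$ has an open neighborhood $U$ with $F|_{U\cap I}$ pseudomonotone; Proposition~\ref{p31} upgrades this to global pseudomonotonicity.

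For the local construction I would first record, as an analogue of Proposition~\ref{p32} and in the spirit of Proposition~\ref{p33}, the elementary sufficient conditions for pseudomonotonicity of a one-dimensional $F$: namely that $F(x)>0$ for all $x\in U\cap I$ (or $F(x)<0$ for all such $x$) forces pseudomonotonicity, as does the presence of a single threshold point $c$ with $F<0$ to its left and $F>0$ to its right, together with the corresponding one-sided endpoint variants. Then at a point $x$ with, say, $F(x)>0$, I would choose $x^*\in F(x)$ with $x^*>0$ and use lower semicontinuity (Remark~\ref{r21}) to find a neighborhood $U_x$ on which every value $F(y)$ meets a fixed interval $(x^*-\epsilon,x^*+\epsilon)\subset(0,\infty)$; by Lemma~\ref{l31} this gives $F(y)\ge 0$ throughout $U_x\cap I$.

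The essential difficulty, and the point where pseudomonotonicity departs from quasimonotonicity, is that $F(y)\ge 0$ on a neighborhood only guarantees quasimonotonicity: if $0\in F(y)$ coexists with a strictly positive value at some point to its left, the pseudomonotone implication fails. Hence the core of the argument is to \emph{upgrade} $F\ge 0$ to $F>0$ on the relevant side of $U_x\cap I$. I would do this by exploiting the density of $J$ and the local pseudomonotonicity of $F$ on $J$: if $0\in F(y)$ for some $y\in U_x\cap J$ while some $J$-point $p<y$ nearby carries a strictly positive value $p^*>0$ (which lower semicontinuity forces), then the pair $(p,p^*),(y,0)$ violates pseudomonotonicity of $F|_{U_y\cap J}$, a contradiction; for $y\notin J$ the hypothesis $0\notin F(y)$ already excludes zeros. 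This confines the value $0$ to at most a single threshold point, which is exactly the sign pattern covered by the sufficient conditions above.

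It remains to treat the points $x$ with $F(x)=\{0\}$ — these necessarily lie in $J$ — by the same case analysis as in Theorem~\ref{t31}: either one of the one-sided sign conditions holds on a neighborhood and pseudomonotonicity follows directly, or, assuming none does, one produces points $u<v$ of $J$ near $x$ with $F(u)$ positive and $F(v)$ negative and derives a contradiction with the local pseudomonotonicity on $J$. The boundary cases where $x$ is an endpoint of $I$ (so that $J$-points exist only on one side) are handled by the endpoint variants of the sufficient conditions, exactly as in Proposition~\ref{p33}(c),(d). I expect the main obstacle to be precisely this upgrading step: ruling out the coexistence of the value $0$ with nearby strictly signed values, since this is what separates genuine pseudomonotonicity from the weaker quasimonotone conclusion obtainable from Lemma~\ref{l31} alone.
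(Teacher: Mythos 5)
Your overall architecture (reduce to local pseudomonotonicity on all of $I$ via Lemma~\ref{l31}, lower-semicontinuity neighborhoods, an upgrading step, then Proposition~\ref{p31}) is the paper's, and your treatment of points with $F(x)\neq\{0\}$ is essentially the paper's argument, written with the equivalent strict formulation of pseudomonotonicity. The genuine gap is in the case $F(x)=\{0\}$. Your claimed dichotomy --- either one of your sufficient sign conditions ($F>0$ throughout, $F<0$ throughout, a single strict threshold point, or an endpoint variant) holds on some neighborhood, or one extracts $u<v$ in $J$ with $F(u)$ containing a positive value and $F(v)$ a negative one --- is false for pseudomonotonicity. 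Take $F(z)=\{z\}$ for $z<0$, $F\equiv\{0\}$ on $[0,1]$, $F(z)=\{z-1\}$ for $z>1$, with $J=I=\Real$: all hypotheses of the theorem hold and $F$ is pseudomonotone, yet at $x=1/2$ no neighborhood satisfies any of your sign conditions (zeros fill a whole subinterval, so there is no \emph{single} threshold point), and no pair with a positive and a negative value exists, so neither branch of your dichotomy applies and your case analysis cannot conclude. The underlying reason is that, unlike in Theorem~\ref{t31}, the violations of pseudomonotonicity compatible with the sign structure $F\le 0$ to the left of $x$ and $F\ge 0$ to the right are pairs involving a zero, e.g.\ $(u,0)$ and $(w,w^*)$ with $u<w<x$, $w^*<0$; these contradict pseudomonotonicity but not quasimonotonicity, so they are never of the form your second branch produces, and conversely zeros cannot always be ``ruled out'' near $x$ --- they may legitimately persist on a plateau.

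The paper closes this case by a different mechanism: assume an arbitrary violating pair $u,v\in U_x$, say $v>u$, $\<u^*,v-u\>\ge 0$ and $\<v^*,v-u\><0$, then transfer the violation into $J$ using lower semicontinuity and density: pick $w\in(u,v)\cap J$ near $v$ with some $w^*<0$ in $F(w)$; if $u\in J$ the pair $(u,u^*),(w,w^*)$ already contradicts pseudomonotonicity of $F|_{U_x\cap J}$ (this works even when $u^*=0$), while if $u\notin J$ then $u^*>0$ by the hypothesis $0\notin F(u)$, and one picks $z\in(u,w)\cap J$ with $z^*>0$, so that $(z,z^*),(w,w^*)$ gives the contradiction. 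This violation-transfer argument needs no classification of sign patterns and handles zero plateaus automatically (there is simply no violation to transfer). To repair your proof you would either have to adopt this argument or enlarge your list of sufficient conditions to admit an interval of zeros separating the negative and positive regions, which is a nontrivial addition to your Proposition~\ref{p33}-style lemma.
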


\begin{proof}
We show that $F$ is locally pseudomonotone on $I$. According  to Lemma \ref{l31}, for any $x\in I$ one has $F(x)\ge 0,$ or $F(x)\le 0.$
Note that according to the hypothesis of the theorem, for any $x\in I\setminus J$ one has $F(x)>0,$ or $F(x)<0.$

 Let $x\in I$ and assume that $F(x)\neq\{0\}.$ We have $F(x)\ge 0,$ or $F(x)\le 0.$ Assume that $F(x)\ge 0$ the other case can be treated analogously. Obviously there exists $x^*\in F(x), x^*>0.$ Let $V=(x^*-\epsilon,x_1^*+\epsilon),$ an open interval, where $\epsilon>0$ such
that $x^*-\epsilon>0.$ Since $F$ is lower semicontinuous, there exists an open neighborhood $U_x$ of $x$, such that for every $y\in U_x\cap I$ we have
$F(y)\cap V\neq\emptyset.$ But then, according to Lemma \ref{l31} $F(y)\ge 0,$ for all $y\in U_x\cap I.$ One can observe as well that $F(y)\neq \{0\},$ for all $y\in U_x\cap I.$

We show next, that $F(y)>0,$ for all $y\in U_x\cap I,\,y\neq x.$ Indeed, if $0\in F(y)$ for some $y\in U_x\cap I,$ then according to the hypothesis we have $y\in J.$ Hence, there exists an open neighborhood $U_y\subseteq U_x\cap I$ of $y$ such that $F|_{U_y\cap J}$ is pseudomonotone. But then for $y_1<y,\,y_1\in U_y\cap J$ we have an $y_1^*\in F(y_1),\, y_1^*>0$, hence $\<0,y_1-y\>=0$ and $\<y_1^*,y_1-y\><0,$ contradiction.

Note that we can have $0\in F(x)$ if, and only if, $I$ is closed from left and $x$ is the left endpoint of $I.$ Otherwise, according to the previous part of the proof for $x_1<x,\,x_1\in U_x\cap J$ we have an $x_1^*\in F(x_1),\, x_1^*>0$, hence $\<0,x_1-x\>=0$ and $\<x_1^*,x_1-x\><0,$ contradiction.

Hence, according to Proposition \ref{p33}, in this case $F$ is strictly pseudomonotone on $U_x\cap I.$

If $F(x)=\{0\}$, then $x\in J$, hence there exists an open neighborhood $U_x$ of $x$ such that $F|_{U_x\cap J}$ is pseudomonotone.
We show that $F$ is pseudomonotone on $U_x.$ Suppose the contrary, that is, there exist $u,v\in U_x$ and $u^*\in T(u),\,v^*\in T(v)$ such that $\<u^*,v-u\>\ge 0$ and $\<v^*,v-u\><0.$

Assume that $v>u$, the other case can be treated analogously.

We have $u^*\ge 0$, respectively $v^*<0$, hence according to Lemma \ref{l31} $F(v)\le 0.$ Hence, by the lower semicontinuity of $F$ and by the denseness of $J$ in $I$, there exists $w\in(u,v),\,w\in J$ such that $w^*< 0$ for some $w^*\in F(w).$

If $u\in J$ then $\<u^*,w-u\>\ge 0$ and $\<w^*,w-u\><0,$  which contradicts the fact that $F$ is locally pseudomonotone on $U_x\cap J.$

If $u\in I\setminus J$ then from $\<u^*,v-u\>\ge 0$ we have $u^*>0$, hence, by the lower semicontinuity of $F$ and by the denseness of $J$ in $I$, there exists $z\in(u,w),\,z\in J$ such that $z^*> 0$ for some $z^*\in F(z).$ But then  $\<z^*,w-z\>\ge 0$ and $\<w^*,w-z\><0,$  contradicts the fact that $F$ is locally pseudomonotone on $U_x\cap J.$

We have shown, that $F$ is locally pseudomonotone on $I$. According to Proposition \ref{p31}, $F$ is globally pseudomonotone on $I$ and the proof is completely done.
\end{proof}

Now we are ready to state and prove one of the main results of this section concerning on pseudomonotonicity of a lower semicontinuous set-valued operator, locally pseudomonotone on a self segment-dense subset of its domain.

\begin{thm}\label{t34} Let $X$ be a real Banach space and let $X^*$ be its topological dual. Let $T:X\rightrightarrows X^*$ be a lower semicontinuous operator with
convex domain $D(T).$ Let $D\subseteq D(T)$ be self segment-dense in $D(T)$ and assume that $T$ is locally {pseudomonotone} on $D.$ Assume further, that the following condition holds:
\begin{equation}  \label{e2}
\forall x,y\in D(T),\, z\in[x,y]\cap(D(T)\setminus D)\mbox{ and }z^*\in T(z) \mbox{ one has }\<z^*,y-x\>\neq0.
\end{equation}
Then $T$ is  {pseudomonotone}.
\end{thm}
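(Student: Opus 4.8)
The plan is to mirror the proof of Theorem \ref{t32}, replacing the one-dimensional strict-quasimonotonicity engine (Theorem \ref{t31}) with its pseudomonotone counterpart (Theorem \ref{t33}), and then bootstrap from the case $x,y\in D$ to arbitrary $x,y\in D(T)$ via lower semicontinuity. Concretely, I first want to reduce the global pseudomonotone implication to a statement about a real set-valued function along each segment. For fixed $x,y\in D$ I define
\begin{equation*}
F_{x,y}:[0,1]\rightrightarrows{\mathbb{R}},\quad F_{x,y}(t)=\{\<z^*,y-x\>:z^*\in T(x+t(y-x))\},
\end{equation*}
and set $J=\{t\in[0,1]:x+t(y-x)\in[x,y]\cap D\}$. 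The self segment-denseness of $D$ in $D(T)$ gives that $[x,y]\cap D$ is dense in $[x,y]$, so $J$ is dense in $[0,1]$; lower semicontinuity of $T$ on the segment transfers to lower semicontinuity of $F_{x,y}$; and local pseudomonotonicity of $T$ on $D$ yields local pseudomonotonicity of $F_{x,y}$ on $J$. The condition (\ref{e2}) ensures exactly that $0\notin F_{x,y}(s)$ for every $s\in[0,1]\setminus J$. Theorem \ref{t33} then applies and tells us $F_{x,y}$ is pseudomonotone on $[0,1]$, which unwinds to the desired implication: whenever $\<x^*,y-x\>>0$ for some $x^*\in T(x)$, one has $\<y^*,y-x\>\ge0$ for every $y^*\in T(y)$ (using the equivalent strict form of pseudomonotonicity in the definition).

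The second step is to pass from $x,y\in D$ to arbitrary $x,y\in D(T)$. Here I would take $(x,x^*),(y,y^*)\in G(T)$ with $\<x^*,y-x\>>0$ and fix any $y^*\in T(y)$. Using the density of $D$ in $D(T)$ together with the lower semicontinuity characterization in Remark \ref{r21}, I would choose sequences $(x_n),(y_n)\subseteq D$ with $x_n\to x$, $y_n\to y$ and corresponding selections $x_n^*\in T(x_n)$, $y_n^*\in T(y_n)$ satisfying $x_n^*\to x^*$ and $y_n^*\to y^*$. Then $\<x_n^*,y_n-x_n\>\to\<x^*,y-x\>>0$, so for $n$ large $\<x_n^*,y_n-x_n\>>0$, and the first part gives $\<y_n^*,y_n-x_n\>\ge0$. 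Passing to the limit yields $\<y^*,y-x\>\ge0$, completing the proof.

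The main obstacle I anticipate is the limiting argument in the second step, and more precisely the interaction between the two convergences and the pairing. One must be careful that the inner product $\<x_n^*,y_n-x_n\>$ genuinely converges to $\<x^*,y-x\>$: this requires $x_n^*\to x^*$ in $X^*$ (norm, as Remark \ref{r21} provides) and $y_n-x_n\to y-x$ in $X$, so the pairing converges by joint continuity of the duality on $X^*\times X$. The analogous care is needed for $\<y_n^*,y_n-x_n\>\to\<y^*,y-x\>$, where the weak inequality $\ge0$ is preserved under limits even though the strict version generally is not — which is exactly why pseudomonotonicity (with its non-strict conclusion) survives this passage whereas strict pseudomonotonicity would need the extra endpoint bookkeeping seen in Proposition \ref{p33}(c)--(d). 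A secondary point to verify carefully is that the endpoint values $F_{x,y}(0)$ and $F_{x,y}(1)$ are precisely $\{\<x^*,y-x\>:x^*\in T(x)\}$ and $\{\<y^*,y-x\>:y^*\in T(y)\}$, so that the one-dimensional conclusion translates verbatim into the Banach-space statement.
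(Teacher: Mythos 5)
Your overall architecture is the same as the paper's: the segment functions $F_{x,y}$, the index set $J$, the reduction to the one-dimensional result (Theorem \ref{t33}; the paper's citation of Theorem \ref{t32} at that point is a typo), and a lower-semicontinuity limiting argument for points outside $D$. The gap is in your second step: passing to the limit in $\<y_n^*,y_n-x_n\>\ge 0$ gives, for arbitrary $(x,x^*),(y,y^*)\in G(T)$, only the implication $\<x^*,y-x\>>0\Rightarrow\<y^*,y-x\>\ge 0$. That is the definition of \emph{quasimonotonicity}, not pseudomonotonicity. Pseudomonotonicity is $\<x^*,y-x\>\ge 0\Rightarrow\<y^*,y-x\>\ge 0$, equivalently $\<x^*,y-x\>>0\Rightarrow\<y^*,y-x\>>0$; the mixed implication you obtain (strict hypothesis, weak conclusion) is strictly weaker than either, and your justification --- that pseudomonotonicity ``with its non-strict conclusion'' survives the passage to the limit --- is precisely where the argument breaks: one equivalent form has a hypothesis ($\ge 0$) that is not stable under approximation, the other a conclusion ($>0$) that is not stable under limits, so neither form survives a naive limit. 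To see that what you actually proved does not imply the theorem, take $T(x)=x^2$ on $\mathbb{R}$ with $D=\mathbb{R}\setminus\{0\}$: this operator is pseudomonotone on pairs from $D$ and quasimonotone on all of $\mathbb{R}$ (exactly the two facts your proof delivers), yet it is not pseudomonotone, since for $x=0$, $y=-1$ one has $\<x^*,y-x\>=0$ but $\<y^*,y-x\>=-1<0$.

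The missing ingredient is condition (\ref{e2}), which your argument never invokes outside the one-dimensional reduction --- and the example above is ruled out by the theorem's hypotheses only because it violates (\ref{e2}) (as $0\in T(0)$). The paper's proof is organized so that (\ref{e2}) can do this job: the limiting argument is run only for $x\in D(T)$ and $y\in D(T)\setminus D$, and after obtaining $\<y^*,y-x\>\ge 0$ the paper applies (\ref{e2}) with $z=y$ to get $\<y^*,y-x\>\neq 0$, hence $\<y^*,y-x\>>0$, which restores the strict form of the pseudomonotonicity implication for those pairs. Your proof should be repaired along the same lines: keep your step for pairs in $D$, and for mixed pairs upgrade the limiting inequality to a strict one via (\ref{e2}) at the point lying outside $D$. (Note that when it is $x$ that lies in $D(T)\setminus D$ and $y\in D$, (\ref{e2}) strengthens the hypothesis rather than the conclusion, so this case needs a separate word --- one rules out $\<y^*,y-x\><0$ by applying the already-proved implication to the reversed pair; the paper itself leaves the residual subcase $\<y^*,y-x\>=0$ implicit, but that is a far smaller omission than dropping (\ref{e2}) from the limit step altogether.)
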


\begin{proof}
For $x,y\in D$ consider the real set-valued function
\begin{equation*}
F_{x,y}:[0,1]\rightrightarrows{\mathbb{R}},\,F_{x,y}(t)=\{\<z^*,y-x\>:z^*\in
T(x+t(y-x))\}.
\end{equation*}

Obviously $T$ is lower semicontinuous on $[x,y].$ Since $T$ is locally pseudomonotone on $D$, obviously $T$ is locally pseudomonotone on $[x,y]\cap D.$ On the other hand  $[x,y]\cap D$ is dense in $[x,y].$ Let
\begin{equation*}
J=\{t\in[0,1]:x+t(y-x)\in [x,y]\cap D\}.
\end{equation*}
Obviously $J$ is dense in $[0,1]$ and $F_{x,y}$ is locally pseudomonotone on $J$. On the other hand, from (\ref{e2}) we obtain, that $0\not\in F_{x,y}(s)$ for
every $s\in[0,1]\setminus J.$ Hence, according to Theorem \ref{t32}, $F_{x,y} $ is pseudomonotone on $[0,1].$ In particular, the latter shows, that for $t^*\in
F_{x,y}(0)$ one has $t^*\ge0$ then $s^*\ge0$ for all $s^*\in F_{x,y}(1)$, or equivalently if $\<x^*,y-x\>\ge0$ for some $x^*\in T(x)$ then $\<y^*,y-x\>\ge0$ for all $y^*\in T(y).$

Let $x\in D(T),\,y\in D(T)\setminus D$  and assume that $\<x^*,y-x\>>0.$ We show that $\<y^*,y-x\>>0,$ for all $y^*\in T(y).$ Since $T$ is lower
semicontinuous, according to Remark \ref{r21}, for every sequence $({x_n})\subseteq D(T)$ such that $x_n\longrightarrow x$ there exists a sequence of
elements $x^*_n \in T(x_n)$ such that $x^*_{n}\longrightarrow x^*,$ respectively for every sequence $({y_n})\subseteq D(T)$ such that $y_n\longrightarrow y$ and for every $y^*\in T(y)$ there exists a sequence of elements $y^*_n \in T(y_n)$ such that $y^*_{n}\longrightarrow y^*.$ Let us fix $y^*\in T(y).$ Since $D$ is dense in $D(T)$, one can consider $(x_n),(y_n)\subseteq D.$ Obviously $\<x_n^*,y_n-x_n\>\longrightarrow \<x^*,y-x\>>0,$ hence $\<x_n^*,y_n-x_n\>>0$ if $n$ is big enough. According to the first part of the proof $\<y_n^*,y_n-x_n\>>0.$ By taking the limit $n\longrightarrow\infty$ one obtains $\<y^*,y-x\>\ge 0.$ Since $y\in D(T)\setminus D$ then by the assumption of the theorem  $\<y^*,y-x\>\neq 0,$ hence $\<y^*,y-x\>> 0.$
\end{proof}

In single-valued case we have the following  corollary which improves Theorem  3.7 from \cite{La}.

\begin{cor} Let $T:D(T)\To X^*$ be a continuous operator with convex domain $D(T).$ Let $D\subseteq D(T)$ self segment-dense in $D(T)$ and assume that $T$ is locally pseudomonotone on $D.$ Assume further, that the following condition holds:
\begin{equation*}
\forall x,y\in D(T),\, z\in[x,y]\cap(D(T)\setminus D) \mbox{ one has }\<T(z),y-x\>\neq0.
\end{equation*}
Then $T$ is pseudomonotone.
\end{cor}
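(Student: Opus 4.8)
The plan is to obtain this statement as an immediate specialization of Theorem \ref{t34} to the single-valued setting, so that no new argument is really needed beyond checking that every hypothesis transfers. First I would recall the observation made in the preliminaries that for a single-valued operator the notions of upper and lower semicontinuity both collapse to ordinary continuity. Hence the continuity of $T:D(T)\To X^*$ guarantees that, regarded as a set-valued operator with singleton values $x\mapsto\{T(x)\}$, the map $T$ is lower semicontinuous on its convex domain $D(T)$, which is exactly the semicontinuity hypothesis required by Theorem \ref{t34}.

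Next I would verify that the remaining assumptions match verbatim. Local pseudomonotonicity of the single-valued $T$ on $D$ is, by Definition \ref{d2.1}, precisely local pseudomonotonicity of the associated singleton-valued operator on $D$, and the self segment-density of $D$ in $D(T)$ is assumed directly. It therefore only remains to identify the scalar condition of the corollary with condition (\ref{e2}). Since $T$ is single-valued, for every $z$ the value set $T(z)$ consists of the single element $T(z)$, so the requirement ``$\<z^*,y-x\>\neq 0$ for all $z^*\in T(z)$'' in (\ref{e2}) reduces to ``$\<T(z),y-x\>\neq 0$'', which is exactly the hypothesis imposed here. Thus all hypotheses of Theorem \ref{t34} are fulfilled.

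Finally I would invoke Theorem \ref{t34} to conclude that $T$ is pseudomonotone on $D(T)$, as claimed. There is essentially no obstacle in this argument; the only points demanding a moment's care are the identification of single-valued continuity with lower semicontinuity in the set-valued sense, and the verification that the scalar condition in the corollary is precisely condition (\ref{e2}) specialized to singleton values. I would remark, for context rather than as part of the proof, that this corollary improves Theorem 3.7 from \cite{La} (equivalently Proposition \ref{pp}~(b)) because the countability restriction on the exceptional set is replaced by the weaker self segment-density requirement on $D$.
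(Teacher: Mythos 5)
Your proposal is correct and coincides with the paper's (implicit) argument: the corollary is stated right after Theorem \ref{t34} precisely as its single-valued specialization, using the fact noted in the preliminaries that lower semicontinuity reduces to ordinary continuity for singleton-valued operators, and that condition (\ref{e2}) reduces to the stated scalar condition. Nothing is missing.
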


We have some similar result to those established in Theorem \ref{t33} for locally strict pseudomonotone set-valued functions.

\begin{thm}\label{t35} Let $I\subseteq {\mathbb{R}}$ be an interval and consider the lower semicontinuous set-valued function $F:I\rightrightarrows {\mathbb{R}}.$
Let $J\subseteq I$ dense in $I$ and assume that  $F$  is  locally {strictly pseudomonotone} on $J.$
Further, assume that $0\not\in F(x),$ for all $x\in I\setminus J.$ Then $F$ is {strictly pseudomonotone} on $I.$
\end{thm}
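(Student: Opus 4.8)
The structure of the proof should closely parallel that of Theorem \ref{t33}, since strict pseudomonotonicity is just pseudomonotonicity with a strengthened conclusion (strict inequality in the consequent). The plan is to show that $F$ is \emph{locally} strictly pseudomonotone on all of $I$ and then invoke Proposition \ref{p31} to upgrade this to global strict pseudomonotonicity. As in Theorem \ref{t33}, I would first apply Lemma \ref{l31} to conclude that for every $x\in I$ either $F(x)\ge 0$ or $F(x)\le 0$, and record that the hypothesis $0\notin F(x)$ for $x\in I\setminus J$ forces the strict sign $F(x)>0$ or $F(x)<0$ at every point off $J$.

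First I would treat a point $x\in I$ with $F(x)\neq\{0\}$. Taking $x^*\in F(x)$ with $x^*>0$ (the case $x^*<0$ being symmetric), I would use lower semicontinuity together with an interval $V=(x^*-\epsilon,x^*+\epsilon)$ with $x^*-\epsilon>0$ to produce a neighborhood $U_x$ on which $F(y)\cap V\neq\emptyset$; Lemma \ref{l31} then gives $F(y)\ge 0$ and $F(y)\neq\{0\}$ for all $y\in U_x\cap I$. The key additional step here — which is where Theorem \ref{t35} genuinely differs from Theorem \ref{t33} — is to rule out $0\in F(y)$ for $y\neq x$ in $U_x\cap I$, so that in fact $F(y)>0$ strictly. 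I would argue exactly as in Theorem \ref{t33}: if $0\in F(y)$ then $y\in J$, so there is a neighborhood $U_y\subseteq U_x\cap I$ with $F|_{U_y\cap J}$ strictly pseudomonotone, and picking $y_1<y$ in $U_y\cap J$ with $y_1^*>0$ gives $\<0,y_1-y\>=0$ while $\<y_1^*,y_1-y\><0$, contradicting even ordinary pseudomonotonicity. With $F(y)>0$ on $U_x\cap I\setminus\{x\}$ secured, Proposition \ref{p33}(a), or (c) in the boundary case where $0\in F(x)$ can only occur at a left endpoint, yields strict pseudomonotonicity on $U_x\cap I$.

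The remaining case is $F(x)=\{0\}$, which forces $x\in J$, so there is a neighborhood $U_x$ with $F|_{U_x\cap J}$ strictly pseudomonotone. Here I would aim to verify directly that $F$ is strictly pseudomonotone on $U_x$ by contradiction: suppose $u,v\in U_x$, $u\neq v$, with $\<u^*,v-u\>\ge 0$ for some $u^*\in F(u)$ but $\<v^*,v-u\>\le 0$ for some $v^*\in F(v)$ (the failure of the strict conclusion). Assuming $v>u$, this forces $u^*\ge 0$ and $v^*\le 0$, and by Lemma \ref{l31} $F(v)\le 0$; lower semicontinuity and density of $J$ then produce $w\in(u,v)\cap J$ with $w^*<0$. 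If $u\in J$ one gets an immediate contradiction with strict pseudomonotonicity of $F|_{U_x\cap J}$; if $u\in I\setminus J$ then $u^*>0$, and interposing a further point $z\in(u,w)\cap J$ with $z^*>0$ again contradicts strict pseudomonotonicity on $J$.

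\textbf{Main obstacle.} The delicate point, and the one I expect to require the most care, is the boundary bookkeeping in the transition from the local strict conclusion on $J$ to a strict conclusion at points of $I\setminus J$ and at endpoints of $I$. Strict pseudomonotonicity demands a \emph{strict} inequality in the consequent even when $\<x^*,y-x\>=0$, so the argument cannot simply mimic the pseudomonotone case where $0\in F(y)$ was permissible away from endpoints; I must consistently upgrade $F(y)\ge 0$ to $F(y)>0$ off the distinguished point and handle the exceptional possibility $0\in F(x)$ only at a closed endpoint via Proposition \ref{p33}(c)--(d). Once local strict pseudomonotonicity on $I$ is established, the global conclusion follows immediately from Proposition \ref{p31}.
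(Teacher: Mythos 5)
Your overall strategy (establish local strict pseudomonotonicity on $I$, then invoke Proposition \ref{p31}) is the same as the paper's, and your first case ($F(x)\neq\{0\}$) is sound: it is the corresponding portion of the proof of Theorem \ref{t33}, whose conclusion there is already strict pseudomonotonicity on $U_x\cap I$ via Proposition \ref{p33}. The genuine gap is in your second case, $F(x)=\{0\}$, and it occurs in exactly the sub-case that distinguishes strict pseudomonotonicity from plain pseudomonotonicity, namely $\<u^*,v-u\>\ge 0$ and $\<v^*,v-u\>=0$, i.e.\ $v^*=0$. There your chain breaks twice. First, Lemma \ref{l31} does not yield $F(v)\le 0$: if $0\in F(v)$, the alternative $F(v)\ge 0$ is equally possible (e.g.\ $F(v)=\{0\}$ or $F(v)=[0,1]$). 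Second, and decisively, the step ``lower semicontinuity and density of $J$ produce $w\in(u,v)\cap J$ with $w^*<0$'' requires $F(v)$ to contain a \emph{strictly negative} element, since lower semicontinuity only propagates values that $F(v)$ actually attains; when $v^*=0$ is the only non-positive element of $F(v)$, no such $w$ need exist. Thus your case-2 argument only rules out $\<v^*,v-u\><0$, which is the content of Theorem \ref{t33} (pseudomonotonicity); the strict inequality --- the whole point of Theorem \ref{t35} --- is left unproved. A telling symptom is that both of your contradictions (the pair $u,w$ or the pair $z,w$) contradict \emph{plain} pseudomonotonicity on $J$; the strictness hypothesis on $J$ is never actually used, so the argument as written could not possibly deliver the strict conclusion.

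The repair is short, and it is what the paper does. When $\<v^*,v-u\>=0$ you have $v^*=0$, so the hypothesis $0\notin F(x)$ for $x\in I\setminus J$ forces $v\in J$. If $u\in J$, the pair $(u,u^*),(v,0)$ violates strict pseudomonotonicity of $F|_{U_x\cap J}$ directly. If $u\notin J$, then $u^*>0$, so lower semicontinuity at $u$ and density of $J$ let you interpose $z\in(u,v)\cap J$ with some $z^*\in F(z)$, $z^*>0$; then $\<z^*,v-z\>>0$ while $\<0,v-z\>=0$, violating strictness of $F|_{U_x\cap J}$ for the pair $(z,v)$. The paper's own proof dispenses with your case analysis entirely: it applies the pseudomonotone result (Theorem \ref{t33}; the citation of Theorem \ref{t32} at that point in the paper is a typo) to get pseudomonotonicity of $F$ on $U_x\cap I$, then notes that any failure of strictness forces $\<v^*,v-u\>=0$, hence $v^*=0$ and $v\in J$, and, applying pseudomonotonicity once more in the reverse direction, also $u^*=0$ and $u\in J$, whereupon strict pseudomonotonicity on $U_x\cap J$ gives the contradiction. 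That route is both shorter and makes transparent where the hypothesis $0\notin F(I\setminus J)$ and the strictness on $J$ are actually needed.
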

\begin{proof} We prove that $F$ is locally strictly pseudomonotone on $I.$ Let $x\in I$ and $U_x$ a neighborhood of $x$, such that $F$ is locally strict pseudomonotone on $U_x\cap J.$ Then, according to Theorem \ref{t32} $F$ is pseudomonotone on $U_x\cap I.$ Assume that there exist $u,v\in U_x\cap I$ and $u^*\in F(u),\, v^*\in F(v)$ such that $\<u^*,v-u\>\ge 0$ and $\<v^*,v-u\>\le 0.$ Since $F$ is pseudomonotone on $U_x\cap I$, we must have $\<v^*,v-u\>=0.$ Hence $v^*=0$ and due to the hypothesis of the theorem we obtain $v\in J.$ But $\<v^*,u-v\>=0$ and by the pseudomonotonicity of $F$ on $U_x\cap I$ we get $\<u^*,u-v\>\ge 0.$ Hence $u^*=0$ and arguing as before $u\in J.$

But $F$ is strictly pseudomonotone on  $U_x\cap J$ hence
$$\<u^*,v-u\>= 0\Rightarrow  \<v^*,v-u\>> 0,$$
contradiction.

Since $F$ is locally strict pseudomonotone on $I$ according to Proposition \ref{p31} $F$ is strict pseudomonotone on $I.$
\end{proof}

In infinite dimension we have the following result concerning on strict pseudomonotonicity of a lower semicontinuous set-valued operator, locally strictly pseudomonotone on a self segment-dense subset of its domain..

\begin{thm} \label{t36} Let $X$ be a real Banach space and let $X^*$ be its topological dual. Let $T:X\rightrightarrows X^*$ be a lower semicontinuous operator with
convex domain $D(T).$ Let $D\subseteq D(T)$ be self segment-dense in $D(T)$  and assume that $T$ is locally {strictly pseudomonotone} on $D.$ Assume further, that the following condition holds:
\begin{equation}  \label{e3}
\forall x,y\in D(T),\, z\in[x,y]\cap(D(T)\setminus D)\mbox{ and }z^*\in T(z) \mbox{ one has }\<z^*,y-x\>\neq0.
\end{equation}
Then $T$ is {strictly pseudomonotone}.
\end{thm}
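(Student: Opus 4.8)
The plan is to imitate the two-step pattern used in Theorems \ref{t32} and \ref{t34}: first transfer the one-dimensional result (Theorem \ref{t35}) to segments whose endpoints both lie in $D$, and then bootstrap to all of $D(T)$. The crucial new difficulty, compared with the pseudomonotone case, is that strictness is fragile under the limiting procedure, so I would not attempt to pass strict inequalities to the limit directly. Instead, I would first observe that local strict pseudomonotonicity on $D$ implies local pseudomonotonicity on $D$ (strict pseudomonotonicity always implies pseudomonotonicity), so that Theorem \ref{t34} — whose hypotheses, including condition (\ref{e3})=(\ref{e2}), are met — already yields that $T$ is pseudomonotone on $D(T)$. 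This plain pseudomonotonicity is then leveraged to recover strictness.

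For the segment step, fix $x,y\in D$ with $x\neq y$ and consider, exactly as before, the real set-valued function $F_{x,y}:[0,1]\rightrightarrows\mathbb{R}$, $F_{x,y}(t)=\{\<z^*,y-x\>:z^*\in T(x+t(y-x))\}$. Lower semicontinuity of $T$ along $[x,y]$ gives lower semicontinuity of $F_{x,y}$; local strict pseudomonotonicity of $T$ on $D$ transfers to local strict pseudomonotonicity of $F_{x,y}$ on $J=\{t\in[0,1]:x+t(y-x)\in D\}$; self segment-density of $D$ makes $J$ dense in $[0,1]$; and condition (\ref{e3}) guarantees $0\notin F_{x,y}(s)$ for $s\in[0,1]\setminus J$. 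Theorem \ref{t35} then makes $F_{x,y}$ strictly pseudomonotone on $[0,1]$, and reading this off at the endpoints $t=0$ and $t=1$ yields the implication $\<x^*,y-x\>\ge0\Rightarrow\<y^*,y-x\>>0$ for all $x^*\in T(x)$, $y^*\in T(y)$, whenever $x,y\in D$.

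To conclude for arbitrary $x\neq y$ in $D(T)$, I would fix $x^*\in T(x)$, $y^*\in T(y)$ with $\<x^*,y-x\>\ge0$ and argue by contradiction, assuming $\<y^*,y-x\>\le0$. Since $T$ is already known to be pseudomonotone, $\<x^*,y-x\>\ge0$ forces $\<y^*,y-x\>\ge0$, whence $\<y^*,y-x\>=0$; applying pseudomonotonicity to the reversed pair (using $\<y^*,x-y\>=0\ge0$) then forces $\<x^*,x-y\>\ge0$, i.e. $\<x^*,y-x\>\le0$, so that also $\<x^*,y-x\>=0$. Thus any failure of strict pseudomonotonicity makes both pairings vanish simultaneously. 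Now I distinguish cases. If $x\in D(T)\setminus D$ then $z=x$, and if $y\in D(T)\setminus D$ then $z=y$, is an endpoint of $[x,y]$ lying in $D(T)\setminus D$, so condition (\ref{e3}) yields $\<x^*,y-x\>\neq0$, respectively $\<y^*,y-x\>\neq0$, contradicting the vanishing. If instead $x,y\in D$, the segment step gives $\<y^*,y-x\>>0$, again contradicting $\<y^*,y-x\>=0$. Since these cases are exhaustive, strict pseudomonotonicity follows.

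The step I expect to be the main obstacle is precisely the passage from $D$ to $D(T)$. The naive limit argument that suffices for pseudomonotonicity cannot be repeated verbatim, because a strict inequality $\<y^*_n,y_n-x_n\>>0$ degenerates to $\<y^*,y-x\>\ge0$ in the limit, and the self segment-density hypothesis only guarantees that $[x,y]\cap D$ is dense in $[x,y]$ when \emph{both} endpoints already belong to $D$, so the one-dimensional reduction is simply unavailable when an endpoint sits in $D(T)\setminus D$. The device that resolves this is to secure ordinary pseudomonotonicity first and exploit its two-sided use to pin both scalar products to zero in any hypothetical counterexample; condition (\ref{e3}) then rules out a vanishing pairing at any endpoint outside $D$, leaving only the already-settled case where both endpoints lie in $D$.
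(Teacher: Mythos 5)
Your proposal is correct, and while its segment step (reduction to $F_{x,y}$ plus Theorem \ref{t35} for endpoints $x,y\in D$) coincides with the paper's, your bootstrap from $D$ to all of $D(T)$ is genuinely different and in fact tighter. The paper handles only the configuration $y\in D(T)\setminus D$ by citing condition (\ref{e3}) to get $\langle y^*,y-x\rangle\neq 0$ and then appealing to the sequential approximation argument ``as in the proof of Theorem \ref{t34}''; but that limit argument starts from the strict inequality $\langle x^*,y-x\rangle>0$, so the equality case $\langle x^*,y-x\rangle=0$ is glossed over, and the configuration $x\in D(T)\setminus D$, $y\in D$ is not addressed at all (there (\ref{e3}) upgrades the hypothesis to $\langle x^*,y-x\rangle>0$, the limit argument yields only $\langle y^*,y-x\rangle\ge 0$, and (\ref{e3}) cannot exclude equality since $y\in D$). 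Your device resolves exactly these points: you first secure plain pseudomonotonicity from Theorem \ref{t34} --- legitimately, since local strict pseudomonotonicity on $D$ implies local pseudomonotonicity on $D$ and (\ref{e3}) is identical to (\ref{e2}) --- and then observe that any violation of strictness forces \emph{both} pairings to vanish by applying pseudomonotonicity in the two directions; a zero pairing at an endpoint outside $D$ contradicts (\ref{e3}), and the case $x,y\in D$ contradicts the segment step. What your route buys is robustness (no strict inequality ever has to survive a limit) and completeness (all three endpoint configurations are covered uniformly); what it costs is only that Theorem \ref{t34} is used as a black box where the paper inlines its proof technique, so the correctness of your argument is inherited from that theorem's statement rather than re-derived.
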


\begin{proof}
For $x,y\in D$ the proof is similar to the proof of Theorem \ref{t34}. If $x\in D(T),\,y\in D(T)\setminus D$ then by (\ref{e3}) one has  $\<y^*,y-x\>\neq 0,$ for all $ y^*\in T(y)$, hence the implication
$$\<x^*,y-x\>\ge0\Rightarrow \<y^*,y-x\>>0$$
can be obtained as in the proof of Theorem \ref{t34}.
\end{proof}

In single-valued case we have the following  corollary which improves Theorem  3.8 from \cite{La}.

\begin{cor} Let $T:D(T)\To X^*$ be a continuous operator with convex domain $D(T).$ Let $D\subseteq D(T)$   self segment-dense in $D(T)$ and assume that $T$ is locally strictly pseudomonotone on $D.$ Assume further, that the following condition holds:
\begin{equation*}
\forall x,y\in D(T),\, z\in[x,y]\cap(D(T)\setminus D) \mbox{ one has }\<T(z),y-x\>\neq0.
\end{equation*}
Then $T$ is strictly pseudomonotone.
\end{cor}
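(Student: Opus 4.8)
The plan is to follow the template of Theorems \ref{t34} and \ref{t35}: reduce the infinite-dimensional statement along segments to the one-dimensional strict pseudomonotonicity result, and then propagate the conclusion to all of $D(T)$ by density and lower semicontinuity. First I would record that the hypotheses here are strictly stronger than those of Theorem \ref{t34}, since local strict pseudomonotonicity on $D$ entails local pseudomonotonicity on $D$ (by the implications in the diagram) and condition (\ref{e3}) coincides with (\ref{e2}). Hence Theorem \ref{t34} already gives that $T$ is pseudomonotone on $D(T)$. This weak conclusion is the engine that produces the non-strict inequalities I will later sharpen into strict ones.

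For the core case $x,y\in D$, I would fix $(x,x^*),(y,y^*)\in G(T)$ with $x\neq y$ and introduce the scalar set-valued map $F_{x,y}:[0,1]\rightrightarrows\Real$, $F_{x,y}(t)=\{\<z^*,y-x\>:z^*\in T(x+t(y-x))\}$. Restricting $T$ to $[x,y]$ keeps it lower semicontinuous, so $F_{x,y}$ is lower semicontinuous; writing $J=\{t\in[0,1]:x+t(y-x)\in D\}$, self segment-denseness of $D$ (Definition \ref{dd}) makes $J$ dense in $[0,1]$, and local strict pseudomonotonicity of $T$ on $D$ transfers to $F_{x,y}$ on $J$; finally (\ref{e3}) forces $0\not\in F_{x,y}(s)$ for $s\in[0,1]\setminus J$. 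Theorem \ref{t35} then yields that $F_{x,y}$ is strictly pseudomonotone on $[0,1]$, which is exactly the implication $\<x^*,y-x\>\ge0\Rightarrow\<y^*,y-x\>>0$ for $x,y\in D$.

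Next I would extend to arbitrary endpoints, splitting according to which endpoint leaves $D$. When $y\in D(T)\setminus D$ (and $x\in D(T)$ arbitrary), pseudomonotonicity of $T$ already gives the weak inequality $\<y^*,y-x\>\ge0$ whenever $\<x^*,y-x\>\ge0$, and then (\ref{e3}) applied with $z=y$ forces $\<y^*,y-x\>\neq0$, hence $\<y^*,y-x\>>0$. If one prefers to see the weak inequality directly rather than quoting Theorem \ref{t34}, it is produced by choosing sequences $x_n,y_n\in D$ with $x_n\to x$, $y_n\to y$ (possible as $D$ is dense in $D(T)$), lifting them via Remark \ref{r21} to $x_n^*\to x^*$, $y_n^*\to y^*$, applying the core case to the pairs in $D$, and passing to the limit; this degrades the strict inequality to a weak one, after which (\ref{e3}) restores strictness.

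The step I expect to be the main obstacle is the remaining configuration $x\in D(T)\setminus D$, $y\in D$, because here the target strictness sits on the $y$-side, where (\ref{e3}) gives no information, and the limiting procedure only ever preserves weak inequalities. To handle it I would argue by contradiction: (\ref{e3}) with $z=x$ upgrades the hypothesis to $\<x^*,y-x\>>0$, while pseudomonotonicity gives $\<y^*,y-x\>\ge0$; were this equal to $0$, then $\<y^*,x-y\>\ge0$, and applying the already-established strict implication to the reversed pair $(y,x)$ — legitimate precisely because its second endpoint $x$ lies in $D(T)\setminus D$, so it falls under the case settled in the previous paragraph — would give $\<x^*,x-y\>>0$, i.e. $\<x^*,y-x\><0$, contradicting $\<x^*,y-x\>>0$. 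Hence $\<y^*,y-x\>>0$ and every configuration is covered. The delicate point is keeping this reversed-pair argument non-circular, which it is, since it only invokes the case $y\in D(T)\setminus D$ already disposed of.
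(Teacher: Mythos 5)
Your proof is correct, and it is in fact more careful than the paper's own treatment. The paper obtains this corollary by simply specializing Theorem \ref{t36} (a continuous single-valued operator is lower semicontinuous as a set-valued map --- a point you use implicitly but never state), and the proof of Theorem \ref{t36} has exactly the skeleton of your core case: reduce along segments to the scalar map $F_{x,y}$ and invoke the one-dimensional result, Theorem \ref{t35}. The genuine differences lie in the extension step. The paper treats only the configuration $x\in D(T)$, $y\in D(T)\setminus D$, by re-running the sequential limiting argument from Theorem \ref{t34}; that argument needs the strict inequality $\langle x^*,y-x\rangle>0$ to propagate positivity to the approximating pairs in $D$, so the degenerate hypothesis $\langle x^*,y-x\rangle=0$ (possible when $x\in D$) is glossed over, and the remaining configuration $x\in D(T)\setminus D$, $y\in D$ is not addressed at all. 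Your route repairs both soft spots: invoking Theorem \ref{t34} to get global pseudomonotonicity first yields the weak inequality directly under the hypothesis $\langle x^*,y-x\rangle\ge 0$, with no limiting procedure, after which condition (\ref{e3}) restores strictness whenever $y\notin D$; and your reversed-pair contradiction --- applying the already-settled case to the pair $(y,x)$, whose second endpoint lies in $D(T)\setminus D$ --- correctly disposes of the case the paper omits, and is indeed non-circular for exactly the reason you give. What the paper's version buys is brevity; what yours buys is an argument that genuinely covers all pairs $(x,x^*),(y,y^*)\in G(T)$, which is what strict pseudomonotonicity demands.
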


\begin{rmk}\rm
The condition
$$\forall x,y\in D(T),\, z\in[x,y]\cap(D(T)\setminus D)\mbox{ and }z^*\in T(z) \mbox{ one has }\<z^*,y-x\>\neq0$$ in the hypothesis of Theorem \ref{t32}, Theorem \ref{t34}, respectively Theorem \ref{t36}, in particular the condition $0\not\in F(x),$ for all $x\in I\setminus J$ in the hypothesis of Theorem \ref{t31}, Theorem \ref{t33}, respectively Theorem \ref{t35}, is essential as the next example shows.
\end{rmk}

\begin{ex}\rm
Let
\begin{equation*}
F:\mathbb{R} \rightrightarrows\mathbb{R} ,\, F(x):=\left\{
\begin{array}{lll}
(0,-x],\mbox{ if } x<0, &  &  \\
\, 0,\mbox{ if } x=0, &  &  \\
\, [-x,0),\mbox{ if } x>0.
\end{array}
\right.
\end{equation*}
Then $F$ is lower semicontinuous on ${\mathbb{R}}$ and locally {strictly pseudomonotone} on ${\mathbb{R}}\setminus\{0\}$, but $F$ is not even {quasimonotone} on ${\mathbb{R}}$.
\end{ex}

It is easy to check that $F$ is locally {strictly pseudomonotone} on $D={\mathbb{R}} \setminus\{0\}$ and lower semicontinuous on $D(F)={\mathbb{R}}.$ Obviously ${
\mathbb{R}}\setminus\{0\}$ is self segment-dense in ${\mathbb{R}},$ hence all the assumptions in the hypothesis of Theorem \ref{t32}, Theorem \ref{t34}, respectively  Theorem \ref{t36} are satisfied excepting the one that
$$\forall x,y\in D(F),\, z\in[x,y]\cap(D(F)\setminus D)\mbox{ and }z^*\in F(z) \mbox{ one has }\<z^*,y-x\>\neq0.$$
 Consequently, their conclusion fail.

Indeed, for $(x,x^*)=(-1,1)\in G(F)$ and $(y,y^*)=(1,-1)\in G(F)$ we have $\<x^*,y-x\>=2>0,$ and $\<y^*,y-x\>=-2<0,$ which shows that $F$ is not {quasimonotone}
on ${\mathbb{R}}.$

\section{Generalized convex functions on dense subsets}

In this section we apply the results obtained in Section 3 to prove the generalized convexity of a locally generalized convex function on  a self segment-dense subset of its domain. In the sequel $X$ denotes a real Banach space and $X^*$ denotes its topological dual.

In order to continue our analysis we need the following concepts (see, for instance \cite{C,R}). Let $f:X\longrightarrow\overline{{\mathbb{R}}}={\mathbb{R}}\cup\{-\infty,+\infty\}$ be a given function, and let $x\in X$ such that $f(x)\in{\mathbb{R}}.$ Recall that the Clarke-Rockafellar generalized derivative of $f$ at $x$ in direction $v$ is defined by
\begin{equation*}
f^{\uparrow}(x,v)=\sup_{\epsilon>0}\limsup_{(y,\alpha)\downarrow_f x\, t\downarrow0}\inf_{u\in B(v,\epsilon)}\frac{f(y+tu)-\alpha}{t},
\end{equation*}
where $(y,\alpha)\downarrow_f x$ means $y\longrightarrow x,\,\alpha\longrightarrow f(x),\,\alpha\ge f(y)$ and $B(v,\e)$ denotes the open ball with center $v$ and radius $\e.$

If $f$ is lower semicontinuous at $x$  the Clarke-Rockafellar generalized derivative of $f$ at $x$ in direction $v$   (see \cite{R}) can be expressed as
\begin{equation*}
f^{\uparrow}(x,v)=\sup_{\epsilon>0}\limsup_{y\downarrow_f x\, t\downarrow0}\inf_{u\in B(v,\epsilon)}\frac{f(y+tu)-f(y)}{t},
\end{equation*}
where $y\downarrow_f x$ means $y\longrightarrow x,\,f(y)\longrightarrow f(x).$

When $f$ is locally Lipschitz $f^{\uparrow}$ coincides with the Clarke directional derivative (see \cite{C}), i.e.
\begin{equation*}
f^{C}(x,v)=\limsup_{\overset{y\to x\, u\to v}{t\downarrow0}}\frac{ f(y+tu)-f(y)}{t}.
\end{equation*}

The Clarke-Rockafellar subdifferential  of $f$ at $x$  is given by
\begin{equation*}
\partial^\uparrow  f(x)=\{x^*\in X^*: \<x^*,v\>\le f^\uparrow (x,v),\,\forall v\in X\}.
\end{equation*}

In what follows we present some convexity notions of a real valued function $f:X\longrightarrow{{\mathbb{R}\cup\{\infty\}}}$ (see, for instance \cite{A,ACL,HKS,P1,PQ}). Recall that the domain of $f$ is the set $\dom f=\{x\in X: f(x)\in \Real\}.$ In the sequel we assume that $\dom f$ is a convex subset of $X.$  Recall that the function $f$ is called:

\begin{enumerate}
\item[(1)] \emph{\ convex}, if for all $x,y\in X,\,t\in [0,1]$ one has the following
\begin{equation*}
f(x+t(y-x))\le f(x)+t(f(y)-f(x))\},
\end{equation*}

\item[(2)] \emph{pseudoconvex}, if for all $x,y\in \dom f,$ the following implication holds
\begin{equation*}
f(y)<f(x)\Rightarrow  \forall x^*\in \partial^\uparrow  f (x):\<x^*,y-x\>< 0,
\end{equation*}

\item[(3)] \emph{strictly pseudoconvex}, if for all $x,y\in \dom f,$ the following implication holds
\begin{equation*}
f(y)\le f(x)\Rightarrow  \forall x^*\in \partial^\uparrow  f (x):\<x^*,y-x\><0,
\end{equation*}

\item[(4)] \emph{quasiconvex}, if for all $x,y\in \dom f,\,t\in[0,1]$ one has the following
\begin{equation*}
f(x+t(y-x))\le \max\{f(x),f(y)\},
\end{equation*}

\item[(5)] \emph{strictly quasiconvex}, if for all $x,y\in \dom f,\,x\neq y,\,t\in(0,1)$ one has the following
\begin{equation*}
f(x+t(y-x))< \max\{f(x),f(y)\}.
\end{equation*}
\end{enumerate}

The study of connection between the (generalized) convexity property of a real valued function and appropriate monotonicity of its Clarke-Rockafellar subdifferential ha a rich literature (see for instance \cite{A,ACL,DH,DH1,Ko,P,P1} and the references therein).
In what follows we assume that $f:X\longrightarrow{\mathbb{R}}\cup\{\infty\}$ is locally Lipschitz.
The relation among the previously presented convexity concepts is shown bellow. 

\begin{center}
\begin{tabular}{cc}
  $$ & $f \mbox{ is convex} $\\
  $$ & $\Downarrow $\\
  $ f \mbox{ is strictly pseudoconvex}\Longrightarrow$ & $f \mbox{ is pseudoconvex}$\\
  $\Downarrow$ & $\Downarrow$\\
  $f \mbox{  is strictly quasiconvex}\Longrightarrow$ & $f \mbox{ is quasiconvex}.$\\
\end{tabular}
\end{center}

\begin{rmk}\rm\label{r4} Observe that de definition of pseudoconvexity, respectively strict pseudoconvexity is equivalent to
 the following: for all $x,y\in \dom f,$
\begin{equation*}
\exists x^*\in \partial^\uparrow  f (x):\<x^*,y-x\>\ge0\Rightarrow  f(y)\ge f(x),
\end{equation*}
respectively
\begin{equation*}
\exists x^*\in \partial^\uparrow  f (x):\<x^*,y-x\>\ge0\Rightarrow  f(y)> f(x),
\end{equation*}
\end{rmk}

The following statement  relates the quasiconvexity of a function to the quasimonotonicity of its Clarke-Rockafellar subdifferential (see \cite{ACL,P,P1}):

\begin{prop}[Theorem 4.1 \cite{ACL}]\label{p41} A lower semicontinuous function $f:X\longrightarrow{\mathbb{R}}\cup\{\infty\}$ is quasiconvex, if and only if, $\partial^\uparrow  f$ is quasimonotone.
\end{prop}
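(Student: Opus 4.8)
The plan is to prove the two implications separately, the nontrivial ingredient being an approximate mean value inequality for the Clarke--Rockafellar subdifferential (Zagrodny's theorem), which produces subgradients that witness the monotone behaviour of $f$ along a segment. The ``easy'' direction will rest on a normal-cone localisation of $\partial^\uparrow f$, and the ``hard'' direction on a two-segment mean value argument.

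For the implication ``$f$ quasiconvex $\Rightarrow\partial^\uparrow f$ quasimonotone'' I would first record the following localisation lemma: if $f$ is lower semicontinuous and quasiconvex and $x^*\in\partial^\uparrow f(x)$, then $\<x^*,y-x\>\le0$ for every $y$ with $f(y)\le f(x)$. This is the statement that $\partial^\uparrow f(x)$ lies in the normal cone at $x$ to the sublevel set $\{z:f(z)\le f(x)\}$, which is convex precisely because $f$ is quasiconvex; it is obtained by unwinding the definition of $f^\uparrow(x,\cdot)$, the inequality $\<x^*,y-x\>\le f^\uparrow(x,y-x)$ together with quasiconvexity forcing $f^\uparrow(x,y-x)\le0$ on directions pointing into the sublevel set (the borderline case $f(y)=f(x)$ being handled by approximating $y$ by points of the strict sublevel set, or directly when $x^*=0$). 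Granting this lemma, quasimonotonicity is immediate: if $(x,x^*),(y,y^*)\in G(\partial^\uparrow f)$ and $\<x^*,y-x\>>0$, the contrapositive of the lemma gives $f(y)>f(x)$; were $\<y^*,y-x\><0$, that is $\<y^*,x-y\>>0$, the same contrapositive with $x$ and $y$ interchanged would give $f(x)>f(y)$, a contradiction. Hence $\<y^*,y-x\>\ge0$.

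For the converse I would argue by contraposition. If $f$ is not quasiconvex there are $a,b\in\dom f$ and $z=a+s(b-a)$ with $s\in(0,1)$ and $f(z)>\max\{f(a),f(b)\}$. Applying Zagrodny's approximate mean value theorem on $[a,z]$, where $f$ strictly increases overall, produces $c_1\in[a,z)$ and $(x_n,x_n^*)\in G(\partial^\uparrow f)$ with $x_n\to c_1$ and $\liminf_n\<x_n^*,z-a\>\ge f(z)-f(a)>0$; since $z-a$ is a positive multiple of $b-a$ this gives $\<x_n^*,b-a\>>0$ for large $n$. Applying the theorem on $[b,z]$, where $f$ strictly increases from $b$ to $z$, produces $c_2\in[b,z)$ and $(y_m,y_m^*)\in G(\partial^\uparrow f)$ with $y_m\to c_2$ and $\liminf_m\<y_m^*,z-b\>>0$; since $z-b$ is a negative multiple of $b-a$ this gives $\<y_m^*,b-a\><0$ for large $m$. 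The points $c_1$ and $c_2$ lie at parameters $<s$ and $>s$ along $a\to b$, so $c_2-c_1$ is a strictly positive multiple of $b-a$, and for $n,m$ large the test direction $y_m-x_n$ is close to it; the aim is to conclude $\<x_n^*,y_m-x_n\>>0$ while $\<y_m^*,y_m-x_n\><0$, contradicting quasimonotonicity.

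The main obstacle is exactly this last step: converting ``$y_m-x_n$ is close to the positive multiple $c_2-c_1$ of $b-a$'' into the two sign assertions. Since the subgradients $x_n^*,y_m^*$ furnished by the mean value theorem need not be norm-bounded, the error terms $\<x_n^*,(y_m-x_n)-(c_2-c_1)\>$ and $\<y_m^*,(y_m-x_n)-(c_2-c_1)\>$ cannot be dismissed by a naive small-vector estimate after freezing one index. Overcoming this requires the finer localised conclusions of Zagrodny's theorem, which also control quantities of the form $\<x_n^*,c_1-x_n\>$, together with a coordinated, diagonal choice of the two indices so that each subgradient is tested against a direction on which its sign is already pinned down. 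This bookkeeping, rather than either application of the mean value theorem itself, is where I expect the real work to lie.
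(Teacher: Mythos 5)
You should first know that the paper contains no proof of this statement: Proposition \ref{p41} is quoted directly from Aussel--Corvellec--Lassonde \cite{ACL}, so there is no internal argument to compare yours with. Judged on its own terms (and against the published proof, which indeed rests on Zagrodny's approximate mean value theorem, as your plan does), your proposal has a genuine gap in \emph{each} direction.

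\textbf{The localisation lemma you base the ``easy'' direction on is false.} For a lower semicontinuous quasiconvex $f$ only the \emph{strict} version holds ($f(y)<f(x)\Rightarrow\langle x^{*},y-x\rangle\le 0$); the non-strict version fails for $\partial^{\uparrow}f$. Take $f:\mathbb{R}\to\mathbb{R}$, $f(t)=\min\{\max\{t,-1\},0\}$, i.e.\ $f=-1$ on $(-\infty,-1]$, $f(t)=t$ on $[-1,0]$, $f=0$ on $[0,\infty)$. This $f$ is Lipschitz, nondecreasing, hence quasiconvex; using $y_{k}=-1/k$, $t_{k}=k^{-2}$ in the Clarke limsup one gets $f^{\uparrow}(0,1)=1$ and $f^{\uparrow}(0,-1)=0$, so $1\in\partial^{\uparrow}f(0)$; yet $y=1$ satisfies $f(y)=f(0)$ and $\langle 1,y-0\rangle=1>0$. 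Both of your proposed repairs fail here: $y=1$ is not in the closure of the strict sublevel set $\{f<0\}=(-\infty,0)$, and $x^{*}=1\neq 0$. With only the (true) strict lemma, your two contrapositives give $f(y)\ge f(x)$ and $f(x)\ge f(y)$, i.e.\ $f(x)=f(y)$ --- no contradiction; the equality case is exactly where the work lies. The correct argument does not go through sublevel-set geometry at all: from $\langle x^{*},y-x\rangle>0$ one has $f^{\uparrow}(x,y-x)>0$, hence there are $\varepsilon>0$ and $x_{k}\to x$, $f(x_{k})\to f(x)$, $t_{k}\downarrow 0$ with $f(x_{k}+t_{k}u)>f(x_{k})$ for all $u\in B(y-x,\varepsilon)$; quasiconvexity then forces $f(w)>f(x_{k})$ for \emph{every} $w\in B(y,\varepsilon/2)$ and large $k$. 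Running the same argument at $y$ and inserting each approximating sequence into the other ball yields $f(y_{j})>f(x_{k})$ and $f(x_{k})>f(y_{j})$ simultaneously, which is the contradiction.

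\textbf{In the ``hard'' direction you name the right obstacle, but the resolution you promise does not exist in that form.} The two requirements are genuinely circular: $\langle x_{n}^{*},y_{m}-x_{n}\rangle>0$ needs $\|y_{m}-c_{2}\|\lesssim 1/\|x_{n}^{*}\|$ (so $m\ge M(n)$), while $\langle y_{m}^{*},x_{n}-y_{m}\rangle>0$ needs $\|x_{n}-c_{1}\|\lesssim 1/\|y_{m}^{*}\|$ (so $n\ge N(m)$). Zagrodny's theorem gives no control on $\|x_{n}^{*}\|$, $\|y_{m}^{*}\|$, so $M$ and $N$ may grow so fast that no pair $(n,m)$ satisfies both; and the finer conclusion $\liminf_{n}\langle x_{n}^{*},c_{1}-x_{n}\rangle\ge 0$ controls each functional only against its \emph{own} offset, not the cross terms $\langle y_{m}^{*},x_{n}-c_{1}\rangle$ that cause the trouble. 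The published proof never pairs two mean-value sequences: quasimonotonicity is only ever tested between a moving sequence and a \emph{fixed} element of the graph. For instance, one first shows that $\langle x^{*},y-x\rangle>0$ for some $x^{*}\in\partial^{\uparrow}f(x)$ implies $f(y)\ge f(x)$: assuming $f(y)<f(x)$, apply Zagrodny on $[y,x]$ to get $(w_{k},w_{k}^{*})$ with $\langle w_{k}^{*},x-w_{k}\rangle>0$, test quasimonotonicity of each $(w_{k},w_{k}^{*})$ against the fixed $(x,x^{*})$ to get $\langle x^{*},x-w_{k}\rangle\ge 0$, and pass to the limit to obtain $\langle x^{*},c-x\rangle\le 0$, contradicting $\langle x^{*},y-x\rangle>0$ since $c-x$ is a positive multiple of $y-x$ --- no unbounded functional is ever paired with a small error vector. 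Quasiconvexity is then extracted from this property by one further single-sequence mean value argument on a segment whose endpoints are adjusted using lower semicontinuity. So your overall decomposition is in the right spirit, but both load-bearing steps --- the non-strict normal-cone lemma and the coordinated diagonal choice --- fail as stated.
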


The following statement holds (see \cite{DH}):

\begin{prop}[Theorem4.1 \protect\cite{DH}]\label{p42} A locally Lipschitz function $f:X\longrightarrow{\mathbb{R}}\cup\{\infty\}$ is strictly quasiconvex, if and only if, $\partial^\uparrow  f$ is strictly quasimonotone.
\end{prop}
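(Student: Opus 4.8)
The plan is to deduce this equivalence from Proposition \ref{p41} together with a separate analysis of the additional ``strictness'' clauses in the two definitions. Since strict quasiconvexity of $f$ implies quasiconvexity, and strict quasimonotonicity of $\partial^\uparrow f$ implies quasimonotonicity, Proposition \ref{p41} already matches the two underlying base notions. It therefore suffices to show, under the standing assumption that $f$ is quasiconvex (equivalently, that $\partial^\uparrow f$ is quasimonotone), that the strict-quasiconvexity condition ``$f(x+t(y-x))<\max\{f(x),f(y)\}$ for all $x\neq y$ and $t\in(0,1)$'' is equivalent to the strict-quasimonotonicity clause ``for all $x\neq y$ there exist $z\in(x,y)$ and $z^*\in\partial^\uparrow f(z)$ with $\langle z^*,y-x\rangle\neq0$''.

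\emph{Forward implication.} First I would observe that if $f$ is strictly quasiconvex and $x\neq y$, then $f$ cannot be constant on $[x,y]$: when $f(x)\neq f(y)$ this is immediate, and when $f(x)=f(y)$ any interior point dips strictly below their common value. Applying Lebourg's (Clarke) mean value theorem \cite{C} to the locally Lipschitz function $f$ on the relevant subsegment then produces $z\in(x,y)$ and $z^*\in\partial^\uparrow f(z)$ with $\langle z^*,y-x\rangle$ equal to a nonzero difference of values of $f$. Combined with the quasimonotonicity of $\partial^\uparrow f$ coming from Proposition \ref{p41}, this yields strict quasimonotonicity.

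\emph{Converse implication (contrapositive).} Assume $f$ is quasiconvex but not strictly quasiconvex, so there are $x\neq y$ and $t_0\in(0,1)$ with $f(z_0)=\max\{f(x),f(y)\}=:M$ for $z_0=x+t_0(y-x)$. A short one-dimensional quasiconvexity computation along $[x,y]$ (taking, say, $f(x)\ge f(y)$ and comparing values of $f$ on $[x,z_0]$) shows that $f$ is forced to be constant equal to $M$ on a nondegenerate subsegment $[x',y']\subseteq[x,y]$, with $M$ maximal along $[x,y]$. The goal is then to prove that for every $z\in(x',y')$ and every $z^*\in\partial^\uparrow f(z)$ one has $\langle z^*,y'-x'\rangle=0$, which exactly negates the strict-quasimonotonicity clause for the pair $x',y'$, completing the contrapositive.

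The main obstacle is precisely this last step. Constancy of $f$ along the segment controls the one-dimensional restriction $g(t)=f(x'+t(y'-x'))$, whose Clarke subdifferential is $\{0\}$ near the relevant parameter; however, the Clarke chain-rule inclusion $\partial g(t)\subseteq\langle\partial^\uparrow f(z),\,y'-x'\rangle$ points the wrong way to conclude directly that the directional values vanish. To bridge this I would invoke the ascent property of the Clarke--Rockafellar subdifferential of a quasiconvex function: if $z^*\in\partial^\uparrow f(z)$ and $\langle z^*,d\rangle>0$, then $f$ increases strictly along $d$ near $z$. Since $f$ remains equal to $M$ in both directions $\pm(y'-x')$ along the subsegment, neither $\langle z^*,y'-x'\rangle>0$ nor $\langle z^*,x'-y'\rangle>0$ can hold, forcing $\langle z^*,y'-x'\rangle=0$. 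Establishing and correctly applying this ascent property (which is the genuinely delicate ingredient, tied to the same circle of ideas underlying Proposition \ref{p41}) is the heart of the argument; everything else reduces to the one-variable geometry of quasiconvex functions and to Proposition \ref{p41}.
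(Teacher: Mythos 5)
First, a point of reference: the paper itself does not prove this proposition; it is quoted verbatim from Daniilidis--Hadjisavvas (Theorem 4.1 of \cite{DH}), so there is no internal proof to compare routes with, and your attempt has to stand on its own. Your forward implication is fine and is indeed the standard argument: strict quasiconvexity forbids constancy of $f$ on any nondegenerate segment, Lebourg's mean value theorem then produces $z\in(x,y)$, $z^*\in\partial^{\uparrow}f(z)$ with $\langle z^*,y-x\rangle=f(v)-f(u)\neq 0$, and Proposition \ref{p41} supplies quasimonotonicity. Your reduction of the converse (contrapositive, constancy on a subsegment $[x',y']$ with $M$ maximal, goal: all subgradients at interior points are orthogonal to $y'-x'$) is also the right reduction.

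The genuine gap is exactly where you located it, and the bridge you propose does not hold: the ``ascent property'' is \emph{false} for quasiconvex locally Lipschitz functions. Take $f:\mathbb{R}^2\to\mathbb{R}$, $f(u_1,u_2)=\min\{0,\,u_2-u_1\}$. This $f$ is Lipschitz and quasiconvex (every sublevel set is either $\mathbb{R}^2$ or a half-plane). At $z=(0,0)$ the Clarke(--Rockafellar) subdifferential is the segment in $\mathbb{R}^2$ joining $(0,0)$ to $(-1,1)$, so $z^*=(-1,1)\in\partial^{\uparrow}f(z)$; with $d=(-1,0)$ one has $\langle z^*,d\rangle=1>0$, and yet $f(z+td)=\min\{0,t\}=0=f(z)$ for every $t>0$, i.e.\ $f$ is \emph{constant} along $d$. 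What is actually provable (and is the property used, e.g., in the proof of Lemma \ref{l41} of this paper) is only the non-strict version: $\langle z^*,v-z\rangle>0$ implies $f\geq f(z)$ on the open half-space $\{u:\langle z^*,u-z\rangle>0\}$. That non-strict statement is perfectly compatible with $f\equiv M$ along the subsegment, so your concluding step (``neither $\langle z^*,y'-x'\rangle>0$ nor $\langle z^*,x'-y'\rangle>0$ can hold'') does not follow from it. The statement you actually need --- that no tilted subgradient can exist at a point \emph{interior} to a constancy segment --- is true, but the example above shows it genuinely can fail at an \emph{endpoint} of a constancy segment; hence any correct proof must exploit the two-sided constancy globally (through the convexity of the sublevel sets along the whole segment), not through a pointwise ascent principle. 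Indeed, the ``strict ascent property'' restricted to such interior points is logically equivalent to the claim being proved, so invoking it is circular. This missing step is precisely the substance of the cited Daniilidis--Hadjisavvas theorem, and the proposal does not supply it.
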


The next result is well-known, see for instance \cite{PQ}.

\begin{prop}[Theorem 4.1 \protect\cite{PQ}]\label{p43} A lower semicontinuous, radially continuous function $f:X\longrightarrow{\mathbb{R}}\cup\{\infty\}$ is pseudoconvex, if and only if, $\partial^\uparrow  f$ is pseudomonotone.
\end{prop}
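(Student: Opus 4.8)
The plan is to prove the two implications separately, using the equivalent formulations of pseudoconvexity recorded in Remark \ref{r4} and of pseudomonotonicity given in the definition list, together with the already-available equivalence between quasiconvexity and quasimonotonicity (Proposition \ref{p41}). Throughout I would reduce everything to sign computations of $\<\cdot,y-x\>$ along the segment $[x,y]$, where radial continuity of $f$ lets me treat $t\mapsto f(x+t(y-x))$ as a genuine continuous one-variable function.

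For the implication ``$f$ pseudoconvex $\Rightarrow\partial^\uparrow f$ pseudomonotone'' I would take $(x,x^*),(y,y^*)\in G(\partial^\uparrow f)$ with $\<x^*,y-x\>\ge 0$ and aim at $\<y^*,y-x\>\ge 0$. Since a pseudoconvex function is quasiconvex, Proposition \ref{p41} makes $\partial^\uparrow f$ quasimonotone, which already settles the case $\<x^*,y-x\>>0$. In the degenerate case $\<x^*,y-x\>=0$ one uses Remark \ref{r4} to get $f(y)\ge f(x)$ and argues by contradiction: if $\<y^*,y-x\><0$ then $\<y^*,x-y\>>0$, so applying Remark \ref{r4} at $y$ gives $f(x)\ge f(y)$, whence $f(x)=f(y)$; combining this equality with the quasiconvex behaviour of $f$ on $[x,y]$ and with $f^\uparrow(y,x-y)\ge\<y^*,x-y\>>0$ produces the contradiction (this degenerate case can equivalently be absorbed by the perturbation argument described below).

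The substantial direction is ``$\partial^\uparrow f$ pseudomonotone $\Rightarrow f$ pseudoconvex''. Arguing by contradiction, suppose there are $x,y\in\dom f$ and $x^*\in\partial^\uparrow f(x)$ with $\<x^*,y-x\>\ge 0$ but $f(y)<f(x)$. The engine is an approximate mean value inequality for the Clarke--Rockafellar subdifferential of a lower semicontinuous function (Zagrodny's theorem): applied on the segment running from $y$ to $x$ it yields a point $c=y+s(x-y)$ with $s\in[0,1)$, together with points $z_n\to c$, $f(z_n)\to f(c)$, and $z_n^*\in\partial^\uparrow f(z_n)$ satisfying $\liminf_n\<z_n^*,x-z_n\>\ge f(x)-f(y)>0$. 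Hence $\<z_n^*,z_n-x\><0$ for large $n$, and the pseudomonotonicity of $\partial^\uparrow f$, read in contrapositive form, forces $\<x^*,z_n-x\><0$. Passing to the limit gives $\<x^*,c-x\>=(1-s)\<x^*,y-x\>\le 0$, contradicting $\<x^*,y-x\>\ge 0$ as soon as $\<x^*,y-x\>>0$.

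It remains to dispose of the degenerate subcase $\<x^*,y-x\>=0$, and this is where I expect the genuine difficulty to lie. If $x^*\neq 0$ I would perturb $y$ to $y'=y+tv$ with $\<x^*,v\>>0$ and $t>0$ small: radial continuity of $f$ along the line through $y$ in direction $v$ keeps $f(y')<f(x)$, while $\<x^*,y'-x\>=t\<x^*,v\>>0$, thereby reducing to the strict case just treated. If $x^*=0$, then $0\in\partial^\uparrow f(x)$ and pseudomonotonicity yields $\<y^*,y-x\>\ge 0$ for every $(y,y^*)\in G(\partial^\uparrow f)$; feeding the Zagrodny points $z_n^*$ into this inequality again contradicts $\<z_n^*,z_n-x\><0$, so no $y$ with $f(y)<f(x)$ can exist and $x$ is a global minimizer. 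The two recurring obstacles are the purely sequential, approximate nature of the mean value inequality — which forbids exhibiting a single subgradient at a single point and forces all sign arguments to be performed in the limit — and the boundary case $\<x^*,y-x\>=0$, whose resolution genuinely depends on the standing radial continuity hypothesis.
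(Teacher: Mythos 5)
The paper itself contains no proof of Proposition \ref{p43}: it is imported verbatim from Theorem 4.1 of \cite{PQ} (``The next result is well-known\dots''), so there is no internal argument to compare against, and your proposal has to be judged on its own. Your hard direction --- pseudomonotonicity of $\partial^\uparrow f$ implies pseudoconvexity --- is sound and is essentially the argument used in the literature: Zagrodny's approximate mean value theorem (which does hold for the Clarke--Rockafellar subdifferential of a lower semicontinuous function on a Banach space) produces $z_n\to c\in[y,x)$ with $\<z_n^*,x-z_n\>>0$ for large $n$, pseudomonotonicity transfers this to $\<x^*,x-z_n\>\ge 0$, and the limit $(1-s)\<x^*,y-x\>\le 0$ excludes the case $\<x^*,y-x\>>0$; the two degenerate subcases are also handled correctly ($x^*\neq 0$ by perturbing $y$, which is legitimate precisely because no subgradient at $y$ is involved and radial continuity keeps $f(y+tv)<f(x)$; $x^*=0$ by feeding the Zagrodny points into pseudomonotonicity).

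The genuine gap is in the direction you treat as easy. First, the step ``a pseudoconvex function is quasiconvex'' is not free here: the paper's implication diagram asserting it is stated under the standing assumption that $f$ is locally Lipschitz, whereas Proposition \ref{p43} assumes only lower semicontinuity and radial continuity, and in that generality pseudoconvex $\Rightarrow$ quasiconvex is itself a nontrivial theorem that must be quoted (e.g.\ from \cite{A}) or proved. Second, and decisively, in the degenerate case $\<x^*,y-x\>=0$, $\<y^*,y-x\><0$ you correctly reach $f(x)=f(y)$, but then only assert that quasiconvexity on $[x,y]$ together with $f^{\uparrow}(y,x-y)>0$ ``produces the contradiction''; no mechanism is given, and none is visible, since $f$ being constant on $[x,y]$ is perfectly compatible with both facts. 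Your fallback --- absorbing this case into the perturbation argument --- cannot work: in this direction both $x^*$ and $y^*$ are attached to their base points, and replacing $y$ by $y'=y+tv$ destroys the datum $y^*\in\partial^\uparrow f(y)$. A mechanism that does close the case is the following: from $\<y^*,x-y\>>0$ and pseudoconvexity at $y$ one gets $f\ge f(y)$ on the half-space $\{w:\<y^*,w-y\>\ge 0\}$, whose interior contains $x$, so $f\ge f(y)=f(x)$ on some ball $B(x,r)$; thus $x$ is a local minimizer, hence $0\in\partial^\uparrow f(x)$, and pseudoconvexity applied to this zero subgradient makes $x$ a global minimizer. On the other hand, $f^{\uparrow}(y,x-y)>0$ yields $c,\epsilon>0$, $y_n\to y$, $f(y_n)\to f(y)$, $t_n\downarrow 0$ with $f(y_n+t_nu)\ge f(y_n)+ct_n$ for all $u\in B(x-y,\epsilon)$, and quasiconvexity applied to $y_n+t_n(x-y_n)\in[y_n,x]$ forces $f(y_n)\le f(x)-ct_n<f(x)$, contradicting global minimality of $x$. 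Without this (or an equivalent) argument your first implication is incomplete.
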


A similar result concerning on the strict pseudomonotonicity of the subdifferential of a strictly pseudoconvex function was established in  \cite{PQ}.

\begin{prop}[Theorem 5.1 \protect\cite{PQ}]\label{p44} A locally Lipschitz function $f:X\longrightarrow{\mathbb{R}}\cup\{\infty\}$ is strictly pseudoconvex, if and only if, $\partial^\uparrow  f$ is strictly pseudomonotone.
\end{prop}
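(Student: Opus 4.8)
The plan is to prove the two implications separately, disposing of the easy direction (strict pseudoconvexity $\Rightarrow$ strict pseudomonotonicity) first. Assume $f$ is strictly pseudoconvex and take $(x,x^*),(y,y^*)\in G(\partial^\uparrow f)$ with $x\neq y$ and $\<x^*,y-x\>\ge 0$. By the equivalent formulation of strict pseudoconvexity recorded in Remark \ref{r4}, the inequality $\<x^*,y-x\>\ge 0$ forces $f(y)>f(x)$, hence $f(x)\le f(y)$. Feeding this back into the original definition of strict pseudoconvexity, now with the roles of $x$ and $y$ interchanged, yields $\<y^*,x-y\><0$ for every $y^*\in\partial^\uparrow f(y)$, that is $\<y^*,y-x\>>0$. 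This is precisely strict pseudomonotonicity, so this direction needs nothing beyond the two forms of the definition.

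For the converse I would first reduce the strict statement to the exclusion of an equality. Assume $\partial^\uparrow f$ is strictly pseudomonotone; since strict pseudomonotonicity implies pseudomonotonicity and $f$, being locally Lipschitz, is lower semicontinuous and radially continuous, Proposition \ref{p43} shows that $f$ is pseudoconvex. Thus, given $x\neq y$ in $\dom f$ and $x^*\in\partial^\uparrow f(x)$ with $\<x^*,y-x\>\ge 0$, pseudoconvexity already delivers $f(y)\ge f(x)$, and it remains only to rule out the case $f(y)=f(x)$.

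The heart of the argument, and the step I expect to be the main obstacle, is exactly this exclusion of equality. Suppose $f(y)=f(x)$ and set $g(t)=f(x+t(y-x))$ for $t\in[0,1]$; since $f$ is locally Lipschitz and $\dom f$ is convex, $g$ is Lipschitz on $[0,1]$ with $g(0)=g(1)$. Applying Lebourg's mean value theorem to $g$ produces $t_0\in(0,1)$ with $0\in\partial g(t_0)$, and the Clarke chain rule for the composition of $f$ with the affine path $t\mapsto x+t(y-x)$ (recall that for locally Lipschitz $f$ the subdifferential $\partial^\uparrow f$ coincides with the Clarke subdifferential) produces a subgradient $\xi_0\in\partial^\uparrow f(z_0)$, where $z_0:=x+t_0(y-x)$, with $\<\xi_0,y-x\>=0$. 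Since $x-z_0=-t_0(y-x)$, this gives $\<\xi_0,x-z_0\>=0\ge 0$ while $z_0\neq x$; strict pseudomonotonicity applied to the pair $(z_0,\xi_0),(x,x^*)$ then forces $\<x^*,x-z_0\>>0$, i.e. $\<x^*,y-x\><0$, contradicting $\<x^*,y-x\>\ge 0$. Hence $f(y)>f(x)$ and $f$ is strictly pseudoconvex. The delicate points to get right are the direction of the Clarke chain-rule inclusion, which must deliver an honest subgradient of $f$ at the intermediate point whose pairing with $y-x$ vanishes, and the verification that the affine restriction is genuinely Lipschitz so that Lebourg's theorem applies.
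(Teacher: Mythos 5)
Your proof is correct, but there is nothing in the paper to compare it against: Proposition \ref{p44} is quoted without proof as Theorem 5.1 of \cite{PQ}, so the paper's ``own proof'' is an external citation, and yours is a genuine self-contained argument. Judged on its own terms it holds up. The forward direction is, as you say, just the definition of strict pseudoconvexity played against its contrapositive form from Remark \ref{r4}, once at $x$ and once at $y$. The reverse direction is also sound: strict pseudomonotonicity implies pseudomonotonicity, a locally Lipschitz function is lower semicontinuous and radially continuous, so Proposition \ref{p43} legitimately reduces everything to excluding the equality $f(y)=f(x)$, and your mean-value contradiction does exclude it -- the pair $(z_0,\xi_0)$, $(x,x^*)$ with $\<\xi_0,x-z_0\>=0$ and $z_0\neq x$ forces $\<x^*,y-x\><0$ under strict pseudomonotonicity. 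Two comments. First, the step you flagged as delicate can be removed entirely: there is no need to pass to $g(t)=f(x+t(y-x))$ and invoke Clarke's chain rule (whose inclusion does run in the direction you need, since $\partial^\uparrow f$ coincides with the Clarke subdifferential for locally Lipschitz $f$), because Lebourg's mean value theorem applies directly to $f$ on the segment $[x,y]$ in a Banach space and immediately produces $z_0\in(x,y)$ and $\xi_0\in\partial^\uparrow f(z_0)$ with $\<\xi_0,y-x\>=f(y)-f(x)=0$; this is exactly the device the paper uses at the end of the proof of Lemma \ref{l41}. Second, the paper's definitions of strict pseudoconvexity and strict pseudomonotonicity must be read with the implicit convention $x\neq y$ (otherwise they are vacuously violated at $x=y$); your argument respects this, since $t_0\in(0,1)$ guarantees $z_0\neq x$.
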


\begin{defn}
\label{d41} A real valued function $f:X\longrightarrow{{\mathbb{R}\cup\{\infty\}}}$ is said to be locally convex, (respectively, locally quasiconvex, locally strictly quasiconvex, locally pseudoconvex, locally strictly pseudoconvex) if every $x\in X$ possesses an open and convex  neighborhood $U_x$ such that the restriction of $f$ on $U_x$, $f|_{U_x}$ is convex, (respectively, quasiconvex, strictly quasiconvex, pseudoconvex, strictly pseudoconvex).
\end{defn}

\begin{rmk}\rm It is known, that local convexity and also local generalized convexity of a differentiable function in general  implies its global counterpart (see \cite{KaPiLa,La}). However, according to the next example  local quasiconvexity does not implies global quasiconvexity.
\end{rmk}

\begin{ex}\rm[Example 4.9 \cite{La}] Consider the function $F:\Real\longrightarrow\Real,$ $$F(x)=\ds\left\{\begin{array}{lll}
\ds-\frac{x^2}{2}-x,\,\mbox{if\,}\, x<-1,\\
\ds\frac12,\,\mbox{if\,}\, x\in[-1,1],\\
\ds-\frac{x^2}{2}+x,\,\mbox{if\,}\, x>1.\\
\end{array}\right. $$
Then it can easily be verified that $F$ is continuously differentiable and locally quasiconvex but is not quasiconvex globally.
\end{ex}

Assume now, that the set $D\subseteq \dom f$ is self segment dense in $\dom f.$ We define the local generalized convexity of $f$ on $D$ as follows.

We say that the function $f$ is:

\begin{enumerate}
\item[(1)] \emph{locally pseudoconvex} on $D$, if every $z\in D$ admits an open neighborhood $U_z$ such that for all $x,y\in U_z\cap D$ the following implication holds
\begin{equation*}
f(y)<f(x)\Rightarrow  \forall x^*\in \partial^\uparrow  f (x):\<x^*,y-x\>< 0,
\end{equation*}

\item[(2)] \emph{locally strictly pseudoconvex} on $D$, if every $z\in D$ admits an open neighborhood $U_z$ such that for all $x,y\in U_z\cap D$ the following implication holds
\begin{equation*}
f(y)\le f(x)\Rightarrow  \forall x^*\in \partial^\uparrow  f (x):\<x^*,y-x\>< 0,
\end{equation*}

\item[(3)] \emph{locally quasiconvex} on $D$, if every $z\in D$ admits an open neighborhood $U_z$ such that for all $x,y\in U_z\cap D$  and $t\in[0,1]$ such that $x+t(y-x)\in D$ one has the following
\begin{equation*}
f(x+t(y-x))\le \max\{f(x),f(y)\},
\end{equation*}

\item[(4)] \emph{locally strictly quasiconvex} on $D$, if every $z\in D$ admits an open neighborhood $U_z$ such that for all $x,y\in U_z\cap D$  and $t\in(0,1)$ such that $x+t(y-x)\in D$ one has the following
\begin{equation*}
f(x+t(y-x))< \max\{f(x),f(y)\}.
\end{equation*}
\end{enumerate}

The following lemma will be useful in the sequel.
\begin{lem}\label{l41} Let $f:X\longrightarrow{\mathbb{R}}\cup\{\infty\}$ be lower semicontinuous with convex domain, and let $D\subseteq\dom f$ self segment-dense in $\dom f.$ Assume that $f$ is locally strictly quasiconvex on $D$ and that the Clarke-Rockafellar subdifferential of $f,\, \partial^\uparrow  f$ is lower semicontinuous on $\dom f.$ Then $\partial^\uparrow  f$ is locally quasimonotone on $D$. If $f$ is also locally Lipschitz, then $\partial^\uparrow  f$ is locally strictly quasimonotone on $D$.
\end{lem}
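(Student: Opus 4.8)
The plan is to argue locally. I fix a point $z\in D$ and, using the local strict quasiconvexity of $f$ on $D$, choose an open convex neighborhood $U_z$ of $z$ on which the defining inequality of strict quasiconvexity holds for every pair of points of $U_z\cap D$ and every intermediate point lying in $D$. The first thing I would record is that, since $D$ is self segment-dense in $\dom f$ and $U_z$ is open and convex, the trace $D\cap U_z$ is itself self segment-dense in $U_z$: it is dense in $U_z$, and for $u,v\in D\cap U_z$ the set $[u,v]\cap D$ is dense in $[u,v]$. This is the device that lets me pass from the dense set to the whole neighborhood, and it is what reduces everything to segments whose endpoints lie in $D$.

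The heart of the argument is to upgrade local strict quasiconvexity on $D\cap U_z$ to quasiconvexity of the restriction $f|_{U_z}$. Along a segment $[u,v]\subseteq U_z$ I reduce to the one–dimensional function $t\mapsto f(u+t(v-u))$, which is lower semicontinuous and, by self segment-density, strictly quasiconvex on a dense set of parameters containing the endpoints. Taking limits and using lower semicontinuity I obtain the non-strict inequality $f(u+t(v-u))\le\max\{f(u),f(v)\}$, first for $u,v\in D\cap U_z$ and then, by density, for all $u,v\in U_z$. Once $f|_{U_z}$ is quasiconvex, Proposition \ref{p41} applied on the open convex set $U_z$ yields that $\partial^\uparrow f$ (which on the open set $U_z$ coincides with the subdifferential of the restriction) is quasimonotone on $U_z$; restricting to $U_z\cap D$ gives the local quasimonotonicity of $\partial^\uparrow f$ on $D$.

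For the second assertion I add that $f$ is locally Lipschitz, hence continuous, which removes the upper-semicontinuity gap in the limiting step. Since passing to limits preserves only the non-strict inequality, strict quasiconvexity of $f|_{U_z}$ is not automatic, so rather than invoking Proposition \ref{p42} through strict quasiconvexity I would obtain strict quasimonotonicity of $\partial^\uparrow f$ on $U_z\cap D$ directly from its definition: quasimonotonicity is already in hand, and I supply the extra non-degeneracy clause as follows. For $x,y\in U_z\cap D$ with $x\neq y$, strict quasiconvexity on $D$ forces $f$ to be non-constant on $[x,y]$ (an interior point of $[x,y]\cap D$ has value strictly below $\max\{f(x),f(y)\}$), and for a locally Lipschitz $f$ a mean-value argument for the Clarke subdifferential then produces $w\in(x,y)$ and $w^*\in\partial^\uparrow f(w)$ with $\langle w^*,y-x\rangle\neq0$. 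Combined with quasimonotonicity this is precisely strict quasimonotonicity on $U_z\cap D$.

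The step I expect to be the main obstacle is the transfer of quasiconvexity from the self segment-dense set $D\cap U_z$ to all of $U_z$ in the merely lower-semicontinuous case. Lower semicontinuity controls values only from below, so there is no a priori upper bound on $f$ at the approximating points of $D$, and without an additional hypothesis the conclusion is genuinely false: one can construct an $f$ that is lower semicontinuous and strictly quasiconvex on a dense self segment-dense set yet fails quasiconvexity, because of an isolated downward spike located off $D$. This is exactly where the standing assumption that $\partial^\uparrow f$ be lower semicontinuous must be used, since at such a spike the subdifferential blows up while at nearby points it is a small singleton, so the hypothesis rules these pathologies out. Making the role of the lower semicontinuity of $\partial^\uparrow f$ explicit—and not merely that of $f$—is the crux of the proof.
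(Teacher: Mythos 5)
Your plan has a genuine gap, and it is precisely the one you flag yourself: the passage from strict quasiconvexity on $D\cap U_z$ to quasiconvexity of $f|_{U_z}$. For endpoints $u,v\in D\cap U_z$ the limiting argument does work (approximate an intermediate point $w$ by $w_n\in[u,v]\cap D$ and use lower semicontinuity of $f$ at $w$), but for endpoints outside $D$ it fails: lower semicontinuity bounds $f(u)$ by $\liminf f(u_n)$, which is the wrong direction --- you would need $\max\{f(u_n),f(v_n)\}$ to be controlled by $\max\{f(u),f(v)\}$, i.e.\ upper semicontinuity. Your own counterexample scheme (a downward spike at a point off $D$, e.g.\ $f(x)=x$ for $x\neq0$, $f(0)=-10$, $D=\mathbb{R}\setminus\{0\}$) shows the implication is false under the stated hypotheses on $f$ alone. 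You then assert that lower semicontinuity of $\partial^\uparrow f$ ``rules these pathologies out,'' but you never show how; since your route through Proposition \ref{p41} requires quasiconvexity of $f$ at points off $D$, and no mechanism is supplied for obtaining it, the proof is incomplete exactly at its crux. (A secondary defect of the same route: Proposition \ref{p41} is stated for functions on all of $X$, and the extension of $f|_{U_z}$ by $+\infty$ need not be lower semicontinuous, so even the reduction to the restriction would need justification.)

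The paper's proof never establishes any convexity property of $f$ off $D$, and that is how it avoids the obstruction; note that local quasimonotonicity on $D$ in the sense of Definition \ref{d2.1} only requires quasimonotonicity of $\partial^\uparrow f\big|_{U_z\cap D}$, i.e.\ only pairs of points of $D$ are ever tested. For $x,y\in U_z\cap D$ with $\langle x^*,y-x\rangle>0$, the inequality $f^\uparrow(x,y-x)>0$ produces sequences $x_n\to x$, $t_n\searrow0$ with $f(x_n+t_n(y-x_n))>f(x_n)$, which by self segment-density may be taken inside $D$; local strict quasiconvexity then gives $f(x_n)<f(y)$, and lower semicontinuity of $f$ yields $f(x)\le f(y)$. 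If also $\langle y^*,y-x\rangle<0$ for some $y^*\in\partial^\uparrow f(y)$, the symmetric argument gives $f(x)=f(y)$. Now the lower semicontinuity of $\partial^\uparrow f$ enters concretely: it transports $y^*$ to a subgradient $u^*\in\partial^\uparrow f(u)$ at some $u\in(x,y)\cap D$ with $\langle u^*,x-u\rangle>0$, and repeating the first step at $u$ gives $f(u)=f(x)=f(y)=\max\{f(x),f(y)\}$, contradicting strict quasiconvexity at the intermediate point $u\in D$. This is the explicit use of the hypothesis that your proposal leaves unexplained. (Your treatment of the strict clause via Lebourg's mean value theorem does match the paper and is fine, but it only becomes available once quasimonotonicity has actually been proved.)
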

\begin{proof} Let $z\in D$ and consider $U_z$ an open neighborhood of $z$ such that for all $x,y\in U_z\cap D$  and $t\in(0,1)$ with $x+t(y-x)\in D$ we have
$f(x+t(y-x))< \max\{f(x),f(y)\}.$ We show that $\partial^\uparrow  f$ is strictly quasimonotone on $U_z\cap D.$

Indeed, let $x,y\in U_z\cap D$ and assume that $\<x^*,y-x\>>0$ for some $x^*\in\partial^\uparrow f(x).$ Then $f^\uparrow (x,y-x)>0$, hence there exists $\e>0$ and the sequences $x_n\To x,\, t_n\searrow 0$ such that
$$\inf_{v\in B(y-x,\e)}\frac{f(x_n+t_n v)-f(x_n)}{t_n}>0.$$
Since $D$ is self segment-dense in $\dom f$ we can assume that $(x_n),(x_n+t_n(y-x_n))\subseteq D.$
If $n$ is big enough then $\|x_n-x\|<\e$, hence $y-x_n\in B(y-x,\e).$
Thus, $f(x_n+t_n(y-x_n)>f(x_n)$ and in virtue of  locally strict quasiconvexity of $f$ on $D$  we get $f(x_n)<f(y).$ Using the lower semicontinuity of $f$ we have
$$f(x)\le\liminf_{n\To\infty}f(x_n),$$
thus $f(x)\le f(y).$

Suppose that there exists $y^*\in\partial^\uparrow f(y)$ such that $\<y^*,y-x\><0.$ Then $\<y^*,x-y\>>0$ which leads to $f^\uparrow (y,x-y)>0$ and using the same arguments as before, we conclude that $f(y)\le f(x).$ Hence, we have $f(x)=f(y).$

By the continuity property of the duality pairing we obtain that there exists an open neighborhood $V$ of $y^*$ such that $\<v^*,x-y\>>0$ for all $v^*\in V.$ Since $\partial^\uparrow f$ is lower semicontinuous, there exists an open neighborhood $U$ of $y$ such that $\partial^\uparrow f(u)\cap V\neq\emptyset$ for all $u\in U\cap U_z.$ Let $u\in (x,y)\cap U\cap D.$ Then there exists $u^*\in \partial^\uparrow f(u)\cap V$ such that $\<u^*,x-y\>>0.$ Since $u=x+t(y-x)$ for some $t\in(0,1)$ we have
$\<x^*,u-x\>=t\<x^*,y-x\>>0$ and $\<u^*,x-u\>=t\<u^*,x-y\>>0$, hence by using the same arguments as in firs part of the proof, we conclude that $f(u)= f(x).$
But then $f(u)=\max\{f(x),f(y)\}$ which contradicts the fact that  $f$ is locally strictly quasiconvex on $D.$

We proved that $\partial^\uparrow f$ is locally quasimonotne on $D.$  It remained to show, in the case when $f$ is locally Lipschitz, that for all $x,y\in U_z\cap D$  there exists $u\in (x,y)\cap D$ such that
$\<u^*,y-x\>\neq 0$ for some $u^*\in \partial^\uparrow f(u).$ Let $x,y\in U_z\cap D.$ Assume that $f(x)\neq f(y)$. This can be assumed since, otherwise, in virtue of locally strict  quasiconvexity of $f$ on $D$ one can take $y'\in(x,y)\cap D$ such that $f(y') <\max \{f(x), f(y)\}\Rightarrow f(y')\neq f(x).$ Assume that $f(y)-f(x)>0$, the case $f(y)-f(x)<0$ can be treated similarly.

According to Lebourg mean value theorem (see \cite{C}), there exists $u\in(x,y)$ and $u^*\in\partial^\uparrow f(u)$ 
such that $\<u^*,y-x\>= f(y)-f(x)>0.$
\end{proof}

In what follows we provide, in a Banach space context, sufficient conditions for strict quasiconvexity of a locally strictly quasiconvex functions. 

\begin{thm}\label{t41} Let $f:X\longrightarrow{\mathbb{R}}\cup \{\infty\}$ be a locally Lipschitz  function, locally strictly quasiconvex on $D,$ where $D\subseteq \dom f$ is self segment-dense in $\dom f.$ If $\partial^\uparrow  f$ is lower semicontinuous and has the property, that $\<z^*,x-y\>\neq 0$ for all $z\in[x,y]\cap \dom f\setminus D,\, x,y\in \dom f,\, x\neq y,\, z^*\in \partial^\uparrow  f(z)$ then $f$ is globally quasiconvex on $X.$
\end{thm}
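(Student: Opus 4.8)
The plan is to transfer the problem from the function $f$ to its Clarke-Rockafellar subdifferential $T := \partial^\uparrow f$ and then apply the globalization result for strict quasimonotonicity, Theorem \ref{t32}. First I would record the structural facts that make $T$ an admissible operator: since $f$ is locally Lipschitz, $\partial^\uparrow f(x)$ is nonempty precisely where $f$ is finite, so $D(T)=\dom f$, which is convex by the standing assumption on $f$; by hypothesis $T=\partial^\uparrow f$ is lower semicontinuous on its domain; and $D\subseteq \dom f=D(T)$ is self segment-dense in $D(T)$. Thus $T$ is exactly the kind of operator to which Theorem \ref{t32} is addressed.

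Next I would produce the local hypothesis required by that theorem. Applying Lemma \ref{l41} to $f$ (which is lower semicontinuous with convex domain, locally Lipschitz, locally strictly quasiconvex on $D$, and whose subdifferential is lower semicontinuous) yields that $T=\partial^\uparrow f$ is locally strictly quasimonotone on $D$. This is precisely the local monotonicity assumption of Theorem \ref{t32}, so the only remaining check is the side condition (\ref{e1}), namely that $\langle z^*,y-x\rangle\neq 0$ for all $x,y\in D(T)$, $z\in[x,y]\cap(D(T)\setminus D)$ and $z^*\in T(z)$. Because $D(T)=\dom f$ and $\langle z^*,y-x\rangle=-\langle z^*,x-y\rangle$, this is just a sign-symmetric restatement of the standing hypothesis that $\langle z^*,x-y\rangle\neq 0$ on $[x,y]\cap(\dom f\setminus D)$. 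Hence all hypotheses of Theorem \ref{t32} hold and we conclude that $T=\partial^\uparrow f$ is strictly quasimonotone on $\dom f$.

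Finally I would translate the monotonicity of the subdifferential back into convexity of $f$. By Proposition \ref{p42}, a locally Lipschitz function is strictly quasiconvex if and only if its Clarke-Rockafellar subdifferential is strictly quasimonotone; applying this to the conclusion of the previous step gives that $f$ is globally strictly quasiconvex. Since strict quasiconvexity implies quasiconvexity (as recorded in the implication diagram preceding Proposition \ref{p41}), $f$ is globally quasiconvex on $X$, which is the desired conclusion. Equivalently, one could observe that strict quasimonotonicity of $\partial^\uparrow f$ entails its quasimonotonicity and then invoke Proposition \ref{p41} to obtain quasiconvexity directly.

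I expect that the substantive difficulty has already been discharged in the two earlier results this argument relies on: Lemma \ref{l41}, which converts local strict quasiconvexity of $f$ into local strict quasimonotonicity of $\partial^\uparrow f$ through a careful analysis of the Clarke-Rockafellar derivative, and Theorem \ref{t32}, which performs the globalization on self segment-dense sets. Consequently the main obstacle in the present proof is not conceptual but one of bookkeeping: one must be certain that $D(\partial^\uparrow f)$ genuinely coincides with the convex set $\dom f$ in which $D$ is self segment-dense, and that the nonvanishing condition of the hypothesis reproduces condition (\ref{e1}) verbatim after the harmless interchange of $x-y$ and $y-x$. Once these identifications are pinned down, the chain Lemma \ref{l41} $\Rightarrow$ Theorem \ref{t32} $\Rightarrow$ Proposition \ref{p42} closes the argument.
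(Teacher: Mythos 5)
Your proposal is correct and follows exactly the paper's own argument: the chain Lemma \ref{l41} $\Rightarrow$ Theorem \ref{t32} $\Rightarrow$ Proposition \ref{p42}, with strict quasiconvexity then implying the stated quasiconvexity. The only difference is that you make explicit the bookkeeping the paper leaves implicit (that $D(\partial^\uparrow f)=\dom f$ is convex and that the nonvanishing hypothesis is a sign-flip of condition (\ref{e1})), which is a welcome clarification rather than a deviation.
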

\begin{proof}   According to Lemma \ref{l41}, $\partial^\uparrow  f$ is locally strictly quasimonotone on $D.$ According to Theorem \ref{t32}, $\partial^\uparrow  f$ is strictly quasimonotone on $\dom f.$ The conclusion follows from Proposition \ref{p42}.
\end{proof}

Similar results to Theorem \ref{t41} hold for locally pseudoconvex, respectively locally strict pseudoconvex functions.

\begin{thm}\label{t42}  Let $f:X\longrightarrow{\mathbb{R}}\cup \{\infty\}$ be a locally Lipschitz function, locally pseudoconvex on $D,$ where $D\subseteq X$ is self segment-dense in $\dom f.$ If $\partial^\uparrow  f$ is lower semicontinuous and has the property, that $\<z^*,x-y\>\neq 0$ for all $z\in[x,y]\cap X\setminus D,\, x,y\in \dom f,\, x\neq y,\, z^*\in \partial^\uparrow  f(z)$ then $f$ is globally pseudoconvex on $X.$
\end{thm}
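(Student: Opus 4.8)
The plan is to reduce this theorem to the already-established machinery exactly as Theorem \ref{t41} was reduced, replacing strict quasiconvexity/quasimonotonicity by pseudoconvexity/pseudomonotonicity throughout. The skeleton of the argument has three links: (i) a lemma analogous to Lemma \ref{l41} showing that local pseudoconvexity of $f$ on $D$ together with lower semicontinuity of $\partial^\uparrow f$ yields local pseudomonotonicity of $\partial^\uparrow f$ on $D$; (ii) an application of Theorem \ref{t34} to upgrade this local pseudomonotonicity on the self segment-dense set $D$ to global pseudomonotonicity of $\partial^\uparrow f$ on $\dom f$; and (iii) an appeal to Proposition \ref{p43} to translate global pseudomonotonicity of the subdifferential back into global pseudoconvexity of $f$. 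Since $f$ is locally Lipschitz, it is in particular lower semicontinuous and radially continuous, so Proposition \ref{p43} applies.

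First I would verify the hypotheses of Theorem \ref{t34} are met. The operator $T=\partial^\uparrow f$ has convex domain (as $\dom f$ is assumed convex and the subdifferential is nonempty on it by local Lipschitzness), is assumed lower semicontinuous, and $D$ is self segment-dense in $\dom f$. The condition \eqref{e2} is precisely the stated hypothesis $\<z^*,x-y\>\neq 0$ for all $z\in[x,y]\cap(\dom f\setminus D)$, $z^*\in\partial^\uparrow f(z)$, since $\<z^*,x-y\>\neq 0$ is equivalent to $\<z^*,y-x\>\neq 0$. The only genuinely new ingredient is step (i): establishing that $\partial^\uparrow f$ is locally pseudomonotone on $D$.

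The hard part will be proving the local pseudomonotonicity claim, which is the pseudoconvex analogue of the first half of Lemma \ref{l41}. I would fix $z\in D$ with its neighborhood $U_z$ on which the local pseudoconvex implication holds, fix $x,y\in U_z\cap D$, and suppose $\<x^*,y-x\>>0$ for some $x^*\in\partial^\uparrow f(x)$; the goal is $\<y^*,y-x\>\ge 0$ for all $y^*\in\partial^\uparrow f(y)$ (using the equivalent formulation of pseudomonotonicity). From $f^\uparrow(x,y-x)>0$ one extracts, via the Clarke--Rockafellar derivative, sequences $x_n\to x$, $t_n\searrow 0$ with $\inf_{v\in B(y-x,\e)}\frac{f(x_n+t_n v)-f(x_n)}{t_n}>0$; using self segment-density of $D$ one arranges $x_n$ and $x_n+t_n(y-x_n)$ to lie in $D$, and the local \emph{pseudoconvexity} (in its contrapositive/equivalent form from Remark \ref{r4}) forces $f(x_n)<f(y)$, whence $f(x)\le f(y)$ by lower semicontinuity. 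One then argues by contradiction: if some $y^*\in\partial^\uparrow f(y)$ had $\<y^*,y-x\><0$, the symmetric construction would give $f(y)\le f(x)$, so $f(x)=f(y)$, and a lower-semicontinuity-of-$\partial^\uparrow f$ argument along $[x,y]$ (selecting $u\in(x,y)\cap D$ with $\partial^\uparrow f(u)$ near $y^*$) produces a point violating the local pseudoconvex implication --- mirroring the final contradiction in Lemma \ref{l41}.

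Once local pseudomonotonicity on $D$ is in hand, the remainder is immediate, so I would write the proof tersely:

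\begin{proof}
Arguing exactly as in the first part of the proof of Lemma \ref{l41}, but invoking the equivalent formulation of local pseudoconvexity from Remark \ref{r4} in place of strict quasiconvexity, one shows that $\partial^\uparrow f$ is locally pseudomonotone on $D$. Indeed, for $z\in D$ with associated neighborhood $U_z$ and $x,y\in U_z\cap D$ with $\<x^*,y-x\>>0$ for some $x^*\in\partial^\uparrow f(x)$, the positivity of $f^\uparrow(x,y-x)$ together with the self segment-density of $D$ and the lower semicontinuity of $f$ yields $f(x)\le f(y)$; a symmetric argument combined with the lower semicontinuity of $\partial^\uparrow f$ then rules out the existence of $y^*\in\partial^\uparrow f(y)$ with $\<y^*,y-x\><0$, so $\<y^*,y-x\>\ge 0$ for all $y^*\in\partial^\uparrow f(y)$, which is local pseudomonotonicity on $U_z\cap D$. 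Since $D$ is self segment-dense in $\dom f$, the operator $T=\partial^\uparrow f$ is lower semicontinuous with convex domain, and the hypothesis $\<z^*,x-y\>\neq 0$ for all $z\in[x,y]\cap(\dom f\setminus D)$ and $z^*\in\partial^\uparrow f(z)$ is precisely condition \eqref{e2} of Theorem \ref{t34}, that theorem gives that $\partial^\uparrow f$ is pseudomonotone on $\dom f$. As $f$ is locally Lipschitz, it is lower semicontinuous and radially continuous, so Proposition \ref{p43} applies and $f$ is globally pseudoconvex on $X$.
\end{proof}
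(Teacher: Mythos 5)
Your high-level skeleton (local pseudomonotonicity of $\partial^\uparrow f$ on $D$, then Theorem \ref{t34}, then Proposition \ref{p43}) is exactly the paper's, and your steps (ii)--(iii) are fine. The gap is in step (i), in two places. First, the property you actually set out to prove --- ``if $\<x^*,y-x\>>0$ for some $x^*\in\partial^\uparrow f(x)$ then $\<y^*,y-x\>\ge 0$ for all $y^*\in\partial^\uparrow f(y)$'' --- is \emph{quasimonotonicity}, not pseudomonotonicity; the genuinely equivalent reformulation of pseudomonotonicity is $\<x^*,y-x\>>0\Rightarrow\<y^*,y-x\>>0$. Quasimonotonicity is strictly weaker: for $f(x)=x^3$ the subdifferential $\{3x^2\}$ satisfies your implication but is not pseudomonotone (take $x=0$, $y=-1$). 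Since Theorem \ref{t34} needs local \emph{pseudo}monotonicity, your reduction does not close; the case $\<x^*,y-x\>>0$, $\<y^*,y-x\>=0$ must also be excluded, so the correct contradiction hypothesis is $\<x^*,y-x\>>0$ and $\<y^*,y-x\>\le 0$, which is what the paper assumes.

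Second, even in the strict case $\<y^*,y-x\><0$, the final contradiction you import from Lemma \ref{l41} does not materialize. Following it: lower semicontinuity of $\partial^\uparrow f$ gives $u\in(x,y)\cap D$ and $u^*\in\partial^\uparrow f(u)$ with $\<u^*,x-u\>>0$, and two applications of Remark \ref{r4} then yield $f(u)\le f(x)$ and $f(x)\le f(u)$, i.e. $f(u)=f(x)=f(y)$. In the strictly quasiconvex setting of Lemma \ref{l41} this equality contradicts $f(u)<\max\{f(x),f(y)\}$; in the pseudoconvex setting it contradicts nothing, since all the relevant pseudoconvexity implications are vacuous or hold with equality (constant functions are pseudoconvex). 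The paper's proof of Theorem \ref{t42} uses a different mechanism precisely here: from $f^\uparrow(x,y-x)>0$ and local quasiconvexity on $U_z\cap D$ (which it first derives from local pseudoconvexity) it gets $f(x_n)<f(y_1)$ for \emph{all} $y_1\in B\left(y,\frac{\e}{2}\right)\cap D$, not just $y_1=y$; then continuity of $f$ plus density of $D$ shows that $y$ is a local minimizer of $f$ on $B\left(y,\frac{\e}{2}\right)\cap\dom f$, hence $0\in\partial^\uparrow f(y)$ by Fermat's rule, whereas pseudoconvexity at $y$ together with $f(x_n)<f(y)$ forces $\<y^*,x_n-y\><0$ for every $y^*\in\partial^\uparrow f(y)$, so $0\notin\partial^\uparrow f(y)$ --- a contradiction which also disposes of the case $\<y^*,y-x\>=0$, thereby fixing the first gap as well. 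Note, incidentally, that the paper's local argument does not use lower semicontinuity of $\partial^\uparrow f$ at all (that hypothesis is needed only to invoke Theorem \ref{t34}); your proof leans on it locally because you are transplanting the wrong lemma.
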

\begin{proof} Let $z\in D$ and consider $U_z$ an open neighborhood of $z$ such that for all $x,y\in U_z\cap D$   we have
$f(y)< f(x)\Rightarrow  \forall x^*\in \partial^\uparrow  f (x):\<x^*,y-x\>< 0.$ We show that $\partial^\uparrow  f$ is pseudomonotone on $U_z\cap D.$

First of all, observe that $f$ is quasiconvex on  $U_z\cap D.$ Indeed, let $x,y\in  U_z\cap D$ and assume that $f(u)>\max\{f(x),f(y)\}$ for some $u\in(x,y)\cap D.$ But then   $u=x+t(y-x)$ for some $t\in(0,1)$ and by the pseudoconvexity of $f$ on  $U_z\cap D$ we obtain that $\forall u^*\in \partial^\uparrow  f (u)$ we have $\<u^*,x-u\>< 0$ and  $\<u^*,y-u\>< 0$ or, equivalently
$\<u^*,-t(y-x)\>< 0$ and  $\<u^*,(1-t)(y-x)\>< 0$, impossible.

Assume $\partial^\uparrow  f$ is not pseudomonotone on $U_z\cap D$ that is, there exists $x,y\in U_z\cap D$  such that $\<x^*,y-x\>>0$ for some $x^*\in\partial^\uparrow f(x)$ and $\<y^*,y-x\>\le0$ for some $y^*\in\partial^\uparrow f(y).$ According to Remark \ref{r4} $\<x^*,y-x\>>0\Rightarrow f(y)\ge f(x),$
and $\<y^*,y-x\>\le0\Rightarrow f(y)\le f(x),$ hence $f(x)=f(y).$

Since $\<x^*,y-x\>>0$ we have  $f^\uparrow (x,y-x)>0$, hence there exists $\e>0$ and the sequences $x_n\To x,\, t_n\searrow 0$ such that
$$\inf_{v\in B(y-x,\e)}\frac{f(x_n+t_n v)-f(x_n)}{t_n}>0.$$
Since $D$ is self segment-dense in $\dom f$ we can assume that $(x_n),(x_n+t_n(y-x_n))\subseteq D.$
If $n$ is big enough then $\|x_n-x\|<\frac{\e}{2}$, hence $y_1-x_n\in B(y-x,\e)$ if $y_1\in B\left(y,\frac{\e}{2}\right).$
Thus, $f(x_n+t_n(y_1-x_n)>f(x_n)$ and in virtue of   local quasiconvexity of $f$ on $D$  we get $f(x_n)< f(y_1)$ for all $y_1\in B\left(y,\frac{\e}{2}\right)\cap D.$ The latter relation combined with the pseudoconvexity of $f$ on $U_z\cap D$ in particular shows that $0\not\in \partial^\uparrow f(y).$

 Using the continuity of $f$ we have
$$f(y)=f(x)=\lim_{n\To\infty}f(x_n)\le f(y_1),$$
 Hence, $f(y)\le f(y_1)$ for all   $y_1\in B\left(y,\frac{\e}{2}\right)\cap D$ which shows that $y$ is a local minimum on $U_z\cap D.$ We show that $y$ is a  minimum on $B\left(y,\frac{\e}{2}\right)\cap \dom f.$ Indeed, let $u\in B\left(y,\frac{\e}{2}\right)\cap \dom f.$ Since $D$ is  dense in $\dom f,$ there exists a sequence $u_n\in B\left(y,\frac{\e}{2}\right)\cap D,\, u_n\To u.$ Obviously $f(u_n)\ge f(y)$, and since $f$ is continuous we have  $f(u)=\lim_{n\To \infty}f(u_n)\ge f(y).$ Hence $y$ is a local minimum on $U_z\cap \dom f$, which implies  $0\in \partial^\uparrow f(y),$ contradiction.

Thus,  $\partial^\uparrow  f$ is locally pseudomonotone on $ D.$ According to Theorem \ref{t33},  $\partial^\uparrow  f$ is  pseudomonotone on $\dom f.$ In virtue of Proposition \ref{p43}, $f$ is pseudoconvex.
\end{proof}

Next we establish some conditions that ensure  that a locally strictly pseudoconvex function on a self segment-dense subset in its domain is strictly pseudoconvex.

\begin{thm}\label{t43} Let $f:X\longrightarrow{\mathbb{R}}\cup \{\infty\}$ be a  locally Lipschitz  function, locally strictly pseudoconvex on $D,$ where $D\subseteq X$ is self segment-dense in $\dom f.$ If $\partial^\uparrow  f$ is lower semicontinuous and has the property, that $\<z^*,x-y\>\neq 0$ for all $z\in[x,y]\cap X\setminus D,\, x,y\in \dom f,\, x\neq y,\, z^*\in \partial^\uparrow  f(z)$ then $f$ is globally strictly pseudoconvex on $X.$
\end{thm}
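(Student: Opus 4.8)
The plan is to follow the same three-step scheme employed for Theorem \ref{t41} and Theorem \ref{t42}: first I would translate the local strict pseudoconvexity of $f$ on $D$ into local strict pseudomonotonicity of the subdifferential $\partial^\uparrow f$ on $D$; then upgrade this to global strict pseudomonotonicity on $\dom f$ by invoking the transfer result Theorem \ref{t36}; and finally convert back to global strict pseudoconvexity of $f$ using Proposition \ref{p44}.

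The first step is the heart of the matter, yet in the strict setting it is considerably simpler than the delicate local-minimum analysis that was needed in Theorem \ref{t42}. I would fix $z\in D$ and take the open neighborhood $U_z$ provided by local strict pseudoconvexity, so that for all $x,y\in U_z\cap D$ one has $f(y)\le f(x)\Rightarrow\<x^*,y-x\><0$ for every $x^*\in\partial^\uparrow f(x)$; by contraposition (the equivalent form in Remark \ref{r4}) this reads $\exists x^*\in\partial^\uparrow f(x):\<x^*,y-x\>\ge 0\Rightarrow f(y)>f(x)$. To establish strict pseudomonotonicity of $\partial^\uparrow f$ on $U_z\cap D$, I would take $x,y\in U_z\cap D$ with $x\neq y$ and $\<x^*,y-x\>\ge 0$ for some $x^*\in\partial^\uparrow f(x)$, obtaining $f(y)>f(x)$. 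If some $y^*\in\partial^\uparrow f(y)$ satisfied $\<y^*,y-x\>\le 0$, that is $\<y^*,x-y\>\ge 0$, then applying the same implication at $y$ (with $x$ as the second point) would give $f(x)>f(y)$, contradicting $f(y)>f(x)$. Hence $\<y^*,y-x\>>0$ for every $y^*\in\partial^\uparrow f(y)$, which is exactly the required local strict pseudomonotonicity. Unlike the non-strict pseudomonotone case, the strict inequalities on the function values collide immediately, so no auxiliary argument about local minima is needed.

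For the remaining steps I would check that $T=\partial^\uparrow f$ meets the hypotheses of Theorem \ref{t36}. It is lower semicontinuous by assumption; since $f$ is locally Lipschitz, $\partial^\uparrow f(x)$ is nonempty precisely for $x\in\dom f$, so $D(\partial^\uparrow f)=\dom f$ is convex; $D$ is self segment-dense in $\dom f$; and, because $[x,y]\subseteq\dom f$ whenever $x,y\in\dom f$ by convexity, the assumption $\<z^*,x-y\>\neq 0$ for $z\in[x,y]\cap(X\setminus D)$ and $z^*\in\partial^\uparrow f(z)$ is, up to the harmless sign change $\<z^*,x-y\>=-\<z^*,y-x\>$, precisely condition (\ref{e3}). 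Theorem \ref{t36} then yields that $\partial^\uparrow f$ is strictly pseudomonotone on $\dom f$, and Proposition \ref{p44}, applicable since $f$ is locally Lipschitz, immediately gives that $f$ is globally strictly pseudoconvex on $X$.

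The only points needing genuine care are bookkeeping rather than substance: confirming $D(\partial^\uparrow f)=\dom f$ and that the sign condition on the excluded set coincides with (\ref{e3}). The conceptual obstacle of Theorem \ref{t42} — that pseudomonotonicity only forces $f(x)=f(y)$, which must then be ruled out by a separate local-minimum argument — simply does not arise in the strict case, so this is the most direct of the three theorems.
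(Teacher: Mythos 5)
Your proposal is correct and follows essentially the same route as the paper: local strict pseudoconvexity on $U_z\cap D$ is converted into local strict pseudomonotonicity of $\partial^\uparrow f$ via the contrapositive form in Remark \ref{r4} (the two strict inequalities $f(y)>f(x)$ and $f(x)>f(y)$ colliding immediately, exactly as in the paper), then the transfer theorem for strictly pseudomonotone operators is applied, and Proposition \ref{p44} closes the argument. In fact your citation is the more accurate one: the paper invokes Theorem \ref{t34} at the transfer step, which is the pseudomonotone version, evidently a slip for Theorem \ref{t36}, the strictly pseudomonotone version that you correctly use.
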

\begin{proof} Let $z\in D$ and consider $U_z$ an open neighborhood of $z$ such that for all $x,y\in U_z\cap D$   we have
$f(y)\le f(x)\Rightarrow  \forall x^*\in \partial^\uparrow  f (x):\<x^*,y-x\>< 0.$ We show that $\partial^\uparrow  f$ is strictly pseudomonotone on $U_z\cap D.$

Suppose the contrary, that is there exist $x,y\in U_z\cap D$ such that $\<x^*,y-x\>\ge0$ for some $x^*\in \partial^\uparrow  f(x)$ and $\<y^*,y-x\>\le0$ for some $y^*\in \partial^\uparrow  f(y).$

According to Remark \ref{r4}  $$\<x^*,y-x\>\ge0\Rightarrow f(y)>f(x),$$ while  $$\<y^*,y-x\>\le0\Rightarrow f(x)>f(y),$$
impossible.

Since  $\partial^\uparrow  f$ is strictly  pseudomonotone on $D,$ according to Theorem \ref{t34},  $\partial^\uparrow  f$ is strictly  pseudomonotone on $\dom f.$ In virtue of Proposition \ref{p44}, $f$ is strictly pseudoconvex.
\end{proof}

\begin{rmk}\rm The assumptions imposed on $\partial^\uparrow  f$  in the hypothesis of the previous theorems cannot be dropped as the next example shows.
\end{rmk}
\begin{ex}\rm Let $f:\Real\To\Real,\, f(x)=-|x|.$ Then $f$ is locally strictly pseudoconvex on $\Real\setminus\{0\}$. We show that  $\partial^\uparrow  f$ is not lower semicontinuous in $x=0$ and  and that $f$ is not even quasiconvex on $\Real.$
\end{ex}
Indeed, it can easily be verified that
 $$\partial^\uparrow f(x)=\ds\left\{\begin{array}{lll}
1,\,\mbox{if\,}\, x<0,\\
\, [-1,1],\,\mbox{if\,}\, x=0,\\
-1,\,\mbox{if\,}\, x>0.\\
\end{array}\right. $$
Obviously $\partial^\uparrow f$ is not lower semicontinuous. Let $x=-1,\,y=1.$ Then for all $z\in (x,y)$ one has $f(z)>-1=\max\{f(x),f(y)\}$, which shows that $f$ is not quasiconvex.


\begin{thebibliography}{99}
\bibitem{A} D. Aussel, \emph{Subdifferential properties of quasiconvex and pseudoconvex functions: Unified approach}, J. Optimiz Theory App.
  \textbf{97}, 29-45 (1998)

\bibitem{ACL} D. Aussel, J.N. Corvellec, M. Lassonde, \emph{Subdifferential characterization of quasiconvexity and convexity}, J. Convex Anal. {\bf  1}, 195-201  (1994)

\bibitem{AF} J.P. Aubin, H. Frankowska, Set-valued analysis, Birkh\"auser, Boston (1990)

\bibitem{B}   H. Br\'ezis, Functional Analysis, Sobolev Spaces and Partial Differential Equations, Springer (2010)



\bibitem{C} F.H. Clarke, {Optimization and nonsmoth analysis}, Wiley, New York (1983)

\bibitem{DH} A. Daniilidis, N. Hadjisavvas, \emph{Characterization of nonsmooth semistrictly quasiconvex and strictly quasiconvex functions},
J. Optimiz Theory App. \textbf{ 102}, 525-536 (1999)

\bibitem{DH1} A. Daniilidis,  N. Hadjisavvas,\emph{On the subdifferentials of quasiconvex and pseudoconvex functions and cyclic monotonicity}, J. Math Anal Appl. \textbf{237}, 30-42 (1999)

\bibitem{HKS} N. Hadjisavvas, \emph{Generalized convexity, generalized monotonicity and nonsmooth analysis}, in Handbook of Generalized Convexity
and Generalized Monotonicity, N. Hadjisavas, S. Koml\'osi, and S. Schaible, eds., Springer Series Nonconvex Optimization and its Applications, Springer,
New York (2005)

\bibitem{KaPiLa} G. Kassay, C. Pintea, S. L\'aszl\'o, \emph{Monotone operators and first category sets}, Positivity \textbf{16}, 565-577 (2012)

\bibitem{KaPiLa1} G. Kassay, C. Pintea, S. L\'aszl\'o, \emph{Monotone operators and closed countable sets}, Optimization \textbf{60}, 1059-1069 (2011)

\bibitem{Ke} P. Kenderov,  \emph{Semi-continuity of set-valued mappings}, Fund. Math. \textbf{88}, 61-69 (1975)

\bibitem{Ko} S. Koml\'osi, \emph{Generalized Convexity and Generalized Derivatives},  Handbook of Generalized Convexity
and Generalized Monotonicity, N. Hadjisavas, S. Koml\'osi, and S. Schaible, eds., Springer Series Nonconvex Optimization and its Applications, Springer,
New York (2005)

\bibitem{La} S. L\'aszl\'o, \emph{\ Generalized monotone operators, generalized convex functions and closed countable sets}, J. Convex Anal. \textbf{18}, 1075-1091 (2011)

\bibitem{LYAK} L.J. Lin, M.F.Yang, Q.H. Ansari, G. Kassay, \emph{Existence results for Stampacchia and Minty type implicit variational inequalities
with multivalued maps}, Nonlinear Anal-Theor.  \textbf{ 61}, 1-19 (2005)

\bibitem{DTL} D.T. Luc, \emph{Existence Results for Densely Pseudomonotone Variational Inequalities}, J. Math Anal Appl. \textbf{254}, 291-308 (2001)

\bibitem{L-S} D.T. Luc, S. Schaible, \emph{Generalized Monotone Nonsmooth Maps}, J. Convex Anal. \textbf{3}, 195-205 (1996)

\bibitem{P} J.P. PENOT, \emph{Generalized convex functions in the light of non smooth analysis}, Lecture notes in Economics and Math. Systems \textbf{429}, Springer Verlag, 269–291 (1995)

\bibitem{P1}  J.P. PENOT, \emph{Are generalized derivatives useful for generalized convex functions?}, in Generalized Convexity, Generalized Monotonicity: Recent results, Crouzeix (Eds.), 3–59 (1998)

\bibitem{PQ} J.P. Penot, P. H. Quang, \emph{Generalized convexity of functions and generalized monotonicity of set-valued maps}, J. Optimiz Theory App. \textbf{92}, 343-356 (1997)

\bibitem{R} R.T. Rockafellar, \emph{Generalized directional derivatives and subgradients of nonconvex functions}, Canadian Journal of Mathematics \textbf{32},
257-280 (1980)
\end{thebibliography}
\end{document}